\title[Asymptotics of fluctuations in Crump--Mode--Jagers processes]
{Asymptotics of fluctuations in Crump--Mode--Jagers processes: the lattice
case} 
\date{29 May, 2018}
\author{Svante Janson}
\thanks{Partly supported by the Knut and Alice Wallenberg Foundation}
\address{Department of Mathematics, Uppsala University, PO Box 480,
SE-751~06 Uppsala, Sweden}
\email{svante.janson@math.uu.se}
\newcommand\urladdrx[1]{{\urladdr{\def~{{\tiny$\sim$}}#1}}}
\keywords{Crump--Mode--Jagers processes; age distribution}
\subjclass[2010]{60J80; 60F05}
\numberwithin{equation}{section}
\renewcommand\le{\leqslant}
\renewcommand\ge{\geqslant}
\theoremstyle{plain}
\newtheorem{theorem}{Theorem}[section]
\newtheorem{thm}[theorem]{Theorem}
\newtheorem{lem}[theorem]{Lemma}
\theoremstyle{definition}
\newtheorem{ex}[theorem]{Example}  
\newtheorem{rem}[theorem]{Remark}  
\theoremstyle{remark}
\newenvironment{romenumerate}[1][0pt]{
\addtolength{\leftmargini}{#1}\begin{enumerate}
 }{\end{enumerate}}
\newenvironment{Aenumerate}[1][0pt]{
\addtolength{\leftmargini}{#1}\begin{enumerate}

 }{\end{enumerate}}
\newenvironment{Bnoenumerate}[1][0pt]{
\addtolength{\leftmargini}{#1}\begin{enumerate}

 }{\end{enumerate}}
 \newenvironment{Cnoenumerate}[1][0pt]{
\addtolength{\leftmargini}{#1}\begin{enumerate}

 }{\end{enumerate}}
\newcounter{oldenumi}
{\setcounter{oldenumi}{\value{enumi}}
\begin{romenumerate} \setcounter{enumi}{\value{oldenumi}}}
{\end{romenumerate}}
\newenvironment{Aenumerateq}
{\setcounter{oldenumi}{\value{enumi}}
\begin{Aenumerate} \setcounter{enumi}{\value{oldenumi}}}
{\end{Aenumerate}}
\newcounter{thmenumerate}
\newcounter{xenumerate}   
\newcommand{\refT}[1]{Theorem~\ref{#1}}
\newcommand{\refL}[1]{Lemma~\ref{#1}}
\newcommand{\refR}[1]{Remark~\ref{#1}}
\newcommand{\refS}[1]{Section~\ref{#1}}
\newcommand{\refSS}[1]{Subsection~\ref{#1}}
\newcommand{\refE}[1]{Example~\ref{#1}}
\xdef\klockan{\the\count1.0\the\count255}
\xdef\klockan{\the\count1.\the\count255}\fi
\newcommand{\sumiq}{\sum_{i=1}^q}
\newcommand{\sumpq}{\sum_{p=1}^q}
\newcommand{\sumtq}{\sum_{t=1}^q}
\newcommand{\sumii}{\sum_{i=1}^\infty}
\newcommand{\sumj}{\sum_{j=0}^\infty}
\newcommand{\sumji}{\sum_{j=1}^\infty}
\newcommand{\sumk}{\sum_{k=0}^\infty}
\newcommand{\sumki}{\sum_{k=1}^\infty}
\newcommand{\suml}{\sum_{\ell=0}^\infty}
\newcommand{\sumln}{\sum_{\ell=0}^n}
\newcommand{\sumkn}{\sum_{k=0}^n}
\newcommand{\sumkin}{\sum_{k=1}^n}
\newcommand{\sumiN}{\sum_{i=1}^N}
\newcommand{\sumjM}{\sum_{j=1}^M}
\newcommand{\sumkM}{\sum_{k=0}^M}
\newcommand\set[1]{\ensuremath{\{#1\}}}
\newcommand\Bigset[1]{\ensuremath{\Bigl\{#1\Bigr\}}}
\newcommand\xpar[1]{(#1)}
\newcommand\bigpar[1]{\bigl(#1\bigr)}
\newcommand\Bigpar[1]{\Bigl(#1\Bigr)}
\newcommand\biggpar[1]{\biggl(#1\biggr)}
\newcommand\lrpar[1]{\left(#1\right)}
\newcommand\xcpar[1]{\{#1\}}
\newcommand\bigabs[1]{\bigl|#1\bigr|}
\newcommand\Bigabs[1]{\Bigl|#1\Bigr|}
\newcommand\lrabs[1]{\left|#1\right|}
\def\rompar(#1){\textup(#1\textup)}    
\newcommand\xfrac[2]{#1/#2}
\newcommand\xqfrac[2]{#1/(#2)}
\newcommand\Bigparfrac[2]{\Bigpar{\frac{#1}{#2}}}
\newcommand\innprod[1]{\langle#1\rangle}
\newcommand\biginnprod[1]{\bigl\langle#1\bigr\rangle}
\newcommand\Biginnprod[1]{\Bigl\langle#1\Bigr\rangle}
\def\xexp(#1){e^{#1}}
\newcommand\floor[1]{\lfloor#1\rfloor}
\newcommand\ntoo{\ensuremath{{n\to\infty}}}
\newcommand\Mtoo{\ensuremath{{M\to\infty}}}
\newcommand\ltoo{\ensuremath{{\ell\to\infty}}}
\newcommand\norm[1]{\|#1\|}
\newcommand\bignorm[1]{\bigl\|#1\bigr\|}
\newcommand\Bignorm[1]{\Bigl\|#1\Bigr\|}
\newcommand\punkt{.\spacefactor=1000}    
\newcommand\iid{i.i.d\punkt}    
\newcommand\ie{i.e\punkt}
\newcommand\eg{e.g\punkt}
\newcommand\cf{cf\punkt}
\newcommand{\as}{a.s\punkt}
\newcommand\ii{\mathrm{i}}
\newcommand{\tend}{\longrightarrow}
\newcommand\dto{\overset{\mathrm{d}}{\tend}}
\newcommand\pto{\overset{\mathrm{p}}{\tend}}
\newcommand\asto{\overset{\mathrm{a.s.}}{\tend}}
\newcommand\eqd{\overset{\mathrm{d}}{=}}
\newcommand\Oas{O_{\mathrm a.s.}}
\newcommand\bbR{\mathbb R}
\newcommand\bbC{\mathbb C}
\newcommand\bbZ{\mathbb Z}
\newcounter{CC}
\newcommand{\CC}{\stepcounter{CC}\CCx} 
\newcommand{\CCx}{C}  
\newcommand{\CCdef}[1]{\xdef#1{\CCx}}     
\newcounter{cc}
\newcommand{\cc}{\stepcounter{cc}\ccx} 
\newcommand{\ccx}{c_{\arabic{cc}}}     
\newcommand{\ccdef}[1]{\xdef#1{\ccx}}     
\renewcommand\Re{\operatorname{Re}}
\newcommand\E{\operatorname{\mathbb E{}}}
\newcommand\PP{\operatorname{\mathbb P{}}}
\newcommand\Var{\operatorname{Var}}
\newcommand\Cov{\operatorname{Cov}}
\newcommand\Corr{\operatorname{Corr}}
\newcommand\supp{\operatorname{supp}}
\newcommand\ga{\alpha}
\newcommand\gb{\beta}
\newcommand\gd{\delta}
\newcommand\gD{\Delta}
\newcommand\gam{\gamma}
\newcommand\gG{\Gamma}
\newcommand\gl{\lambda}
\newcommand\gL{\Lambda}
\newcommand\gS{\Sigma}
\newcommand\gs{\sigma}
\newcommand\gss{\sigma^2}
\newcommand\gth{\vartheta}
\newcommand\eps{\varepsilon}
\renewcommand\phi{\xxx}  
\newcommand\bz{\bar z}
\newcommand\cB{\mathcal B}
\newcommand\cF{\mathcal F}
\newcommand\cG{\mathcal G}
\newcommand\cH{\mathcal H}
\newcommand\cI{\mathcal I}
\newcommand\cN{\mathcal N}
\newcommand\cR{{\mathcal R}}
\newcommand\cZ{{\mathcal Z}}
\newcommand\ett[1]{\boldsymbol1\xcpar{#1}}
\newcommand\qw{^{-1}}
\newcommand\qww{^{-2}}
\newcommand\qq{^{1/2}}
\newcommand\qqw{^{-1/2}}
\newcommand\intoo{\int_0^\infty}
\newcommand\ooo{[0,\infty)}
\newcommand\dd{\,\mathrm{d}}
\newcommand\lhs{left-hand side}
\newcommand\rhs{right-hand side}
\newcommand\CMJ{Crump--Mode--Jagers}
\newcommand\CMJbp{\CMJ{} branching process}
\newcommand\zgf{Z^\gf}
\newcommand\zgfo{Z^{\chio}}
\newcommand\mmu{\widehat\mu}
\newcommand\mXi{\widehat\Xi}
\newcommand\vecx{\vec X}
\newcommand\vecv{\vec v}
\newcommand\vecu{\vec u}
\newcommand\veca{\vec a}
\newcommand\xoo{_0^\infty}
\newcommand\koo{_{k=0}^\infty}
\newcommand\joo{_{j=0}^\infty}
\newcommand\llr{\ell^2_R}
\newcommand\llrqw{{\ell^2_{R\qw}}}
\newcommand\hhr{H^2_R}
\newcommand\vecN{\vec N}
\newcommand\veceta{\vec\eta}
\newcommand\vecgth{\vec\gth}
\newcommand\Ni[1]{N_{i,#1}}
\newcommand\xii{_i}
\newcommand\kk{^{(k)}}
\newcommand\kkx[1]{^{(#1)}}
\newcommand\kki{^{(k,i)}}
\newcommand\bB{\overline B}
\newcommand\bW{\overline W}
\newcommand\refAA{\ref{Afirst}--\ref{Ar}}
\newcommand\refAAA{\refAA{} and \ref{Aroots}}
\newcommand\refAAAA{\refAA, \ref{Aroots} and \ref{Agf}}
\newcommand\refAroots{B}
\newcommand\ArootsR{(\refAroots${}'$)}
\newcommand\nM{_{n,M}}
\newcommand\dw{\mathrm{d}w}
\newcommand\dz{\mathrm{d}z}
\newcommand\glgf{\gl^{\gf}}
\newcommand\gLgf{\gL^{\gf}}
\newcommand\vgf{V^{\gf}}
\newcommand\vecgdl{\gD\vec\gl^{\gf}}
\newcommand\gamb[1]{\kappa_{#1}}
\newcommand\gGG{\gG_*}
\newcommand\gGGG{\gG_{**}}
\newcommand\ggx{\gam_*}
\newcommand\ddl{\frac{\mathrm d}{\mathrm{d}\gl}}
\newcommand\rhox{r}
\newcommand\SI{\cI}
\newcommand\llll{^{(\ell)}}
\newcommand\bgam{\bar\gamma}
\newcommand\FF{\cF}
\newcommand\GG{\cG}
\newcommand\gDM{\gD M}
\newcommand\gDMx{\gD \Mx}
\newcommand\gDMy{\gD \My}
\newcommand\Mx{\widehat M}
\newcommand\My{\widetilde M}
\newcommand\Vx{\widehat V}
\newcommand\Vy{\widetilde V}
\newcommand\wrt{with respect to}
\newcommand\chix{\Psi}
\newcommand\gf{\chi}
\newcommand\hT{\hat T}
\newcommand\tR{\tilde R}
\newcommand\ellj{j}
\newcommand\ellk{k}
\newcommand\trx{\tilde r}
\newcommand\gsx{\sigma^*}
\newcommand\ettmx{\frac{m-1}m}
\newcommand\qU{\check U}
\newcommand\zchia{\zgfo_n}
\newcommand\zchib{\innprod{\vecx_n,\vecgdl}}
\newcommand\chio{\tilde\chi}
\newcommand{\Polya}{P\'olya}
\begin{document}

%

\begin{abstract}
Consider a supercritical Crump--Mode--Jagers process such that all births
are at integer times (the lattice case).
Let $\mmu(z)$ be the generating function of the intensity of the offspring
process, and consider the complex roots of $\mmu(z)=1$. The smallest (in
absolute value)  such root is $e^{-\ga}$, where $\ga>0$ is the Malthusian
parameter; let $\ggx$ be the second smallest absolute value of a root.

We show, assuming some technical conditions, that there are three cases:
\begin{romenumerate}
\item 
if $\ggx>e^{-\ga/2}$, then
the second-order fluctuations of the age distribution are asymptotically
normal;
\item if $\ggx=e^{-\ga/2}$, then
the fluctuations are still 
asymptotically normal, but with a larger order of the variance; 
\item if $\ggx<e^{-\ga/2}$, then
the fluctuations are even larger,  but will oscillate and
(except in degenerate cases) not
converge in distribution.
\end{romenumerate}
This trichotomy is similar to what has been seen in related situations,
e.g.\ for some other branching processes, and for P\'olya urns.

The results lead to a
symbolic calculus
describing the limits.
The results extends to populations counted by a random characteristic.
\end{abstract}

\maketitle

\section{Introduction}\label{S:intro}

Consider a \CMJbp{},
 starting with a single individual born at time 0, 
where
an individual has $N\le\infty$ children born at the times when the parent
has age
$\xi_1\le\xi_2\le\dots$. 
Here $N$ and $(\xi_i)_i$ are random, and different individuals have
independent copies of these random variables. 
Technically, it is convenient
to regard $\set{\xi_i}_1^N$ as a point process $\Xi$ on $\ooo$, and give
each individual $x$ an independent copy $\Xi_x$ of $\Xi$.
For further details, see \eg{} \citet{Jagers}.

We consider the supercritical case, when the population grows to infinity
(at least with positive probability). As is well-known, under weak
assumptions, the population grows exponentially, like $e^{\ga t}$ for some
constant $\ga>0$ known as the \emph{Malthusian parameter}, see e.g.\ 
\cite[Theorems (6.3.3) and (6.8.1)]{Jagers};
in particular, the population size properly normalized converges to some
positive random variable, and the age distribution stabilizes.
Our purpose is to study the second-order fluctuations of
the age distribution, or more generally, 
of the
population counted with a random characteristic.

We consider in this paper 
the lattice case; we thus assume
that
the $\xi_i$ are integer-valued and thus all
births occur at integer times \as, but there is no $d>1$ such that all birth
times \as{} are divisible by $d$.

Our setting can, for example, 
be considered as a model for the (female)
population of some animal that is fertile several years and gets one or
several children once every year, 
with the numbers of children different years random
and dependent.

Our main results (Theorems \ref{T1}--\ref{T3})
show  that there are three different cases 
depending on properties of the intensity measure
$\E\Xi$ of the offspring process:
in one case fluctuations are, after proper normalization,
asymptotically normal, with only a short-range dependence between different
times;
in another case, there is a long-range dependence and, again after proper
normalization (different this time), 
the fluctuations are \as{} approximated by oscillating (almost periodic)
random functions of
$\log n$, which furthermore essentially are determined by
the initial phase of the branching process, and presumably non-normal;
the third case is an intermediate boundary case.
See \refS{Smain} for precise results.

A similar trichotomy has been found in several related
situations.
Similar results are proved for multi-type Markov branching processes 
by \citet[Section VIII.3]{AsmussenH}. 
Their type space may
be very general, so this setting includes also the single-type
non-Markov case studied
here (also in the non-lattice case \cite[Section VIII.12]{AsmussenH}), 
since a Crump--Mode--Jagers branching process may be seen as a Markov
process where the type of an individual is its entire life history until
present. However, this will in general be a large type space, and the
assumptions of \cite{AsmussenH} will in general not be satisfied;
in particular, their ``condition (M)'' \cite[p.~156]{AsmussenH}
is typically not satisfied, by the same argument as in
\cite[p.~173]{AsmussenH} for a related situation.
Hence, we can not obtain our results directly from the 
closely related results in \cite{AsmussenH}, although
there is an overlap in some special cases (for example the
Galton--Watson case in \refE{EGW}).

Another related situation is given by multi-colour \Polya{} urn processes,
see \eg{} \cite{SJ154} 
(which uses methods and results from branching process theory).
The same trichotomy appears there too, with a criterion formulated in terms
of eigenvalues of a matrix that can be seen as the (expected) ``offspring
matrix'' in that setting.

It would be interesting to find more general theorems that would include  
these different but obviously related results together.

\begin{rem}
  Our setup includes the Galton--Watson case, where all births occur when
  the mother has age 1 (\refE{EGW}), 
but this case is much simpler than the general case
  and can be treated by simpler methods; 
see
\citet[Section 2.10]{Jagers}, where results closely related to the ones below
are given.
\end{rem}

\begin{rem}
It would be very interesting to extend the results to the 
perhaps more interesting non-lattice case;
we expect similar results
(under suitable assumptions),
but this case seems to present new technical challenges, and we leave this
as an open problem.
\end{rem}

\section{Assumptions and main result}\label{Smain}

Let $\mu:=\E\Xi$ be  the intensity measure of the offspring process;
thus $\mu:=\sumk \mu_k\gd_k$, where 
$\mu_k$ is the expected number of children 
that an individual bears at age $k$ (and $\gd_k$ is the Dirac delta, i.e.,  a
point mass at $k$). 
Let $N_k:=\Xi\set k$ be the number of children born to an individual at age $k$.
Thus $N=\sumki N_k$ and $\mu_k=\E N_k$. 

We make the following standing assumptions, valid throughout the paper.
The first assumption (supercriticality) is essential; otherwise there is no
asymptotic behaviour to analyse.
The assumptions \ref{Amu0}--\ref{Adeath}
are simplifying and convenient but presumably not essential.
(For \ref{Adeath}, this is shown in \refE{Edeath}.)
\begin{Aenumerate}

\item \label{Afirst}
 The process is supercritical, \ie,
$\mu([0,\infty])=\sumk\mu_k = \E N>1$.

\item\label{Amu0}  No children are born instantaneously, \ie, $\mu_0=0$.

\item \label{Age1} $N\ge1$ a.s.
Thus the process \as{} survives.

\item \label{Adeath} There are no deaths.

\end{Aenumerate}

Define, for all complex $z$ such that either $z\ge0$ or 
the sums or expectations below converge
absolutely,  
\begin{equation}\label{mmu}
  \mmu(z):=\sumk \mu_k z^k 
=\sumk \E [N_k] z^k 
=\E \sumiN z^{\xi_i}
\end{equation}
and
the complex-valued random variable
\begin{equation}\label{mXi}
  \mXi(z):=
\intoo z^x\dd\Xi(x)
=\sumiN z^{\xi_i}
=\sumk N_k z^k.
\end{equation}
Thus $\mmu(z)=\E\mXi(z)$.

We make two other standing assumptions:
\begin{Aenumerateq}

\item \label{Amalthus}
$\mmu\bigpar{m\qw}=1$ for some $m>1$. 
\end{Aenumerateq}

Thus $\ga:=\log m$ satisfies 
$\sumki \mu_k e^{-k\ga}=\mmu(e^{-\ga})=1$, so $\ga$ is the
Malthusian parameter, and the population grows roughly with a factor
$e^{\ga}=m$ for each generation (see \eg{} \eqref{EZ} and \eqref{Zoo} below).

\begin{Aenumerateq}
\item \label{Ar}
$\E [\mXi(r)^2]<\infty$ for some $r>m\qqw$.
\end{Aenumerateq}

We fix in the sequel  some $r>m\qqw$ satisfying \ref{Ar}.
We assume for convenience $r\le1$.
Note that \ref{Ar} implies 
\begin{equation}\label{mmur}
\mmu(r)=\E\mXi(r)<\infty.  
\end{equation}
Hence $\mmu(z)$ and $\mXi(z)$ are defined, and analytic, 
at least for $|z|\le r$.
Since $\mmu(z)$ is a strictly increasing function on $\ooo$,
$m\qw$ in \ref{Amalthus} is the unique positive root of $\mmu(z)=1$.
However, $\mmu(z)=1$ may have other complex roots; we shall see that the
asymptotic behaviour of the fluctuations depends crucially on the position
of these roots. 
We define,
with $D_r:=\set{|z|<r}$,
\begin{align}\label{gGG}
  \gG&:=\set{z\in D_r:\mmu(z)=1},
\qquad \gGG:=\gG\setminus\set{m\qw},
\\
\label{ggx}
  \ggx&:=\inf \set{|z|:z\in\gGG},
\\
\gGGG&:=\set{z\in\gGG:|z|=\ggx}, \label{gGGG}
\end{align}
with $\ggx=\infty$ if $\gGG=\emptyset$.
(These sets may depend on the choice of $r$, but for our purposes this does
not matter. Recall that we assume $r>m\qqw$.) 
Since $\mmu(z)$ is analytic, $\gG$ is discrete and thus,
if $\ggx<\infty$, then $\gGGG$ is a finite non-empty set which we write as
$\set{\gam_1,\dots,\gam_q}$.


Let $Z_n$ be the total number of individuals at time $n$. 
(Which by \ref{Amu0} equals the number of individuals born up to time $n$.)
We define $Z_n$
for all integers $n$ by letting $Z_n:=0$ for $n<0$.
By assumption, $Z_0=1$.
It is well-known that the  number of individuals $ Z_n$
grows asymptotically
like $m^n$ as \ntoo.
For example, see \eg{} \cite[Theorem (6.3.3)]{Jagers} (and remember
that we here consider the lattice case),
\begin{equation}
  \label{EZ}
\E Z_n \sim \cc m^n \ccdef\ccEZ,
\qquad \text{ as \ntoo},
\end{equation}
with some $\ccEZ>0$.
Moreover, if $\E[\mXi(m\qw)\log\mXi(m\qw)]<\infty$, and
in particular if $\E [\mXi(m\qw)^2]<\infty$, which follows from our assumption 
\ref{Ar}, then
\begin{equation}\label{Zoo}
  Z_n/m^n\asto\cZ,
\qquad \text{ as \ntoo},
\end{equation}
for some random variable $\cZ>0$, see
\eg{} 
\citet{Nerman}. 
In particular, it follows 
that
for any fixed $k\ge1$ 
\begin{equation}\label{znkz}
  Z_{n-k}/Z_n\asto m^{-k}.
\end{equation}

 The number of individuals of age $\ge k$ at time $n$ is $Z_{n-k}$.
For large $n$, we expect this to be roughly $m^{-k} Z_n$, see \eqref{znkz},
and to study the fluctuations, we  define
\begin{equation}\label{ynk}
  X_{n,k}:=Z_{n-k}-m^{-k} Z_n,
\qquad k=0,1,\dots
\end{equation}
Note that $X_{n,0}=0$.

We state our main results
as three separate theorems, 
treating  the cases $\ggx> m\qqw$,
$\ggx=m\qqw$ and $\ggx<m\qqw$ separately.
In particular, note that Theorems \ref{T1}--\ref{T2}
yield asymptotic normality of $X_{n,k}$ when $\ggx\ge m\qqw$.
Proofs are given in later sections.
The results are extended to random characteristics in
\refS{Schar}. 

By the assumption \ref{Ar} and
\eqref{mXi}, $\E N_k^2<\infty$ for every $k\ge1$.
Define, for $j,k\ge1$, 
\begin{equation}\label{gsjk}
  \gs_{jk}:=\Cov(N_j,N_k)
\end{equation}
and, at least for $|z|< r$,
\begin{equation}\label{tc}
\gS(z):=  \sum_{i,j}\gs_{ij}z^i\bz^j
=\Cov\Bigpar{\sum_i N_iz^i,\sum_jN_j\bz^j}
=\E\bigabs{ \mXi(z)-\mmu(z)}^2.
\end{equation}
Let, for $R>0$, $\llr$ be the Hilbert space of infinite vectors
\begin{equation}\label{llr}
  \ell^2_R:=
\Bigset{(a_k)\koo:\norm{(a_k)\xoo}_{\llr}^2:=\sumk R^{2k}|a_k|^2<\infty}.
\end{equation}
(We often simplify the notation and denote a vector in $  \ell^2_R$ by
$(a_k)_k$.)

We begin with the case $\ggx>m\qqw$, which by \eqref{gGG}--\eqref{ggx}
is equivalent to:
\begin{Bnoenumerate}
\item \label{Aroots}
$\mmu(z)\neq 1$ for all complex $|z|\le m\qqw$ except possibly $z= m\qw$.
\end{Bnoenumerate}

\begin{thm}
  \label{T1}
  Assume \refAAA, i.e., $\ggx>m\qqw$. Then, as \ntoo,
\begin{equation}\label{t1a}
  X_{n,k}/\sqrt{Z_n} \dto \zeta_k,
\end{equation}
jointly for all $k\ge0$, 
for some jointly normal random variables $\zeta_k$ with mean $\E\zeta_k=0$ and
covariance matrix given by, for any finite sequence $a_0,\dots,a_K$ of real
numbers,
\begin{equation}\label{t1b}
\Var\Bigpar{\sum_k a_k\zeta_k}
=
\frac{m-1}m\oint_{|z|=m\qqw}\frac{\lrabs{\sum_k a_k z^k -\sum_k a_k m^{-k}}^2}
{|1-z|^2\,|1-\mmu(z)|^2}	
\gS(z)\frac{|\dz|}{2\pi m\qqw}.
\end{equation}

The convergence \eqref{t1a} holds also in the stronger sense that
$ ( Z_n\qqw X_{n,k})_k \dto (\zeta_k)_k$
in the Hilbert space $\llr$,
for any $R<m\qq$.
The limit variables $\zeta_k$ are non-degenerate unless
$\Xi$ is deterministic, i.e., $N_k=\mu_k$ \as{} for each $k\ge0$.
\end{thm}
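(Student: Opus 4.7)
My plan is to decompose $X_{n,k}$ as a martingale transform indexed by successive time-steps, apply a martingale central limit theorem, and identify the limiting covariance via a Parseval/contour-integral computation; the root condition $\ggx>m\qqw$ will enter precisely when the contour is deformed onto $|z|=m\qqw$.

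To begin, I would introduce the filtration $(\cF_t)$ where $\cF_t$ records all births and reproductive events up to time $t$. Setting $B_t:=Z_t-Z_{t-1}$ and $\Delta_t:=B_t-\E[B_t\mid\cF_{t-1}]$, I would consider the Doob martingale $M_t^{(n)}:=\E[Z_n\mid\cF_t]$. Since each individual born at time $t$ contributes on average $\E Z_{n-t}\sim\CC m^{n-t}$ to $Z_n$, its one-step increment $M_t^{(n)}-M_{t-1}^{(n)}$ is essentially $\E Z_{n-t}\cdot\Delta_t$ up to lower-order corrections coming from the within-individual dependence of $(N_k^{(x)})_k$. Telescoping over $t$ and subtracting the analogous expansion for $Z_{n-k}$ produces a martingale-transform representation
\begin{equation*}
X_{n,k}=\sum_{t=1}^{n} w_{n,k,t}\,\Delta_t + \op\bigpar{m^{n/2}},
\qquad
w_{n,k,t}\sim\bigpar{\ett{t\le n-k}-m^{-k}}\E Z_{n-t}.
\end{equation*}

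Next I would verify the hypotheses of the Hall--Heyde martingale CLT for the array $Z_n\qqw\sum_k a_k w_{n,k,t}\Delta_t$, with joint convergence in $k$ reduced to the one-dimensional case by Cram\'er--Wold. The conditional second moment $\E[\Delta_t^2\mid\cF_{t-1}]$ is a quadratic form in the age-profile at time $t-1$ with coefficients built from $\gs_{ij}$; by the \as{} age-profile stabilisation implied by \eqref{znkz} and \eqref{Zoo}, it is asymptotic to $Z_{t-1}$ times a deterministic constant. Combined with $Z_t\sim\cZ m^t$, the normalized total conditional variance reduces to a deterministic Riemann-type limit, and the conditional Lindeberg condition follows from the second-moment hypothesis \ref{Ar}. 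To match this limit with \eqref{t1b}, I would interpret the weights $w_{n,k,t}/\E Z_{n-t}$ and the covariance kernel as Fourier coefficients and apply Parseval on $|z|=m\qqw$. The Green-function factor $(1-\mmu(z))^{-1}$ arises from the renewal structure of descendants, while $\gS(z)$ comes from the offspring variance. Under $\ggx>m\qqw$, the integrand is bounded on the contour except possibly at $z=m\qw$: the simple zero of $1-\mmu(z)$ at $z=m\qw$ produces a double pole of $|1-\mmu(z)|^{-2}$, matched by the double zero of $\bigabs{\sum_k a_k z^k-\sum_k a_k m^{-k}}^2$ (which vanishes there because $\mmu(m\qw)=1$ forces both sums to coincide at that point), so the integral is finite.

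Finally, $\llr$-convergence for $R<m\qq$ reduces to a uniform tail bound $\sum_{k>K}R^{2k}\E[X_{n,k}^2]=o(m^n)$ as $K\to\infty$, which I would obtain by comparing with $\mXi(r)$ and invoking \ref{Ar} with $R<m\qq<r$; non-degeneracy of the limit when $\Xi$ is not deterministic is immediate from $\gS(z)\not\equiv 0$ on the circle. The main obstacle, I expect, is the Parseval step: justifying the contour-integral identification requires careful bookkeeping with weights that are only implicitly defined through generating-function coefficients, verifying the cancellation at $z=m\qw$, and absorbing the lower-order error terms---including those from the within-individual dependence of $(N_k^{(x)})_k$, which forces conditional expectations into $\Delta_t$ even though only the unconditional covariances $\gs_{ij}$ survive in the final formula.
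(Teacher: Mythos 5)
Your approach is a martingale CLT, which is in spirit the paper's second proof (Sections~\ref{Smart}--\ref{SpfT1-B}), but the concrete decomposition you propose has a gap that is not cosmetic.

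The within-individual dependence of $(N_j)_j$ is not a lower-order correction: it contributes at leading order and is precisely where the kernel $\gS(z)=\sum_{i,j}\gs_{ij}z^i\bar z^j$ in \eqref{t1b} comes from. With the filtration you describe---observing only reproductive events up to time $t$---the Doob increment $\E[Z_n\mid\cF_t]-\E[Z_n\mid\cF_{t-1}]$ is \emph{not} approximately $\E Z_{n-t}\cdot\Delta_t$ with a scalar innovation $\Delta_t$. Passing from $\cF_{t-1}$ to $\cF_t$ reveals $N_{t-\tau_x}^{(x)}$ for every individual $x$ born before $t$; besides producing $B_t$ newborns, each such revelation updates the conditional law of $x$'s \emph{future} fertility and hence the conditional expectation of $x$'s descendant count. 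A variance count shows these update terms have the same order as $\E Z_{n-t}\Delta_t$, so they cannot be absorbed into $\op(m^{n/2})$. Moreover, your claim that $\E[\Delta_t^2\mid\cF_{t-1}]$ is a quadratic form in the age profile with coefficients built from $\gs_{ij}$ would require $\Var(N_k\mid N_1,\dots,N_{k-1})$ to be deterministic; this fails unless the $N_j$ are independent, which is not assumed.

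The paper avoids both problems with a different filtration: $\FF_\ell$ from \refS{Sprel} reveals each individual's \emph{entire life history} at the moment of birth. Then $B_\ell$ is already $\FF_{\ell-1}$-measurable (so your scalar $\Delta_t$ would vanish identically); the object carrying the innovation is the vector $(W_{\ell,j})_j$ with $W_{\ell,j}=B_{\ell,j}-\mu_jB_\ell$, and the martingale increment is the linear combination $\gDM_{n,\ell}=\sum_j\alpha_{n-\ell-j}W_{\ell,j}$ with $\alpha_k=\innprod{T^k(\vec v),\vec a}$. Its conditional variance is exactly $B_\ell\sum_{i,j}\gs_{ij}\alpha_{n-\ell-i}\alpha_{n-\ell-j}$, which is where $\gS$ enters. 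Two smaller issues: the point $m\qw$ lies strictly inside the circle $|z|=m\qqw$, not on it, so there is no boundary singularity to cancel; the hypothesis $\ggx>m\qqw$ enters instead through the spectral bound $\norm{T^n}_{\llr}=O(R_1^n)$ of \refL{LC}, which makes the series $\sum_k\alpha_kz^k=\innprod{(1-zT)\qw\vec v,\vec a}$ converge on $|z|=m\qqw$ and gives the closed form via \refL{Lres}. Also, the paper must refine the time-indexed martingale $(M_{n,k})_k$ to the individual-indexed $(\Mx_{n,i})_i$ to obtain a conditional Lindeberg condition, since the conditional quadratic variation of the time-indexed martingale is carried by only the last few steps; your sketch does not address this. (The paper's first proof, Sections~\ref{Snormal}--\ref{SpfT1-A}, sidesteps martingale theory entirely by using the explicit representation $\vecx_n=-\sum_k W_{n-k}T^k(\vec v)$, a truncation, and an \iid{} CLT.)
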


Recall that joint convergence of an infinite number of variables means joint
convergence of any finite set. (This is convergence in the
product space $\bbR^\infty$, see \cite{Billingsley}.)
Note that trivially $\zeta_0=0$ (included for completeness).

The variance formula \eqref{t1b} can be interpreted as a stochastic
calculus, where the limit variables are seen as stochastic integrals (in a
general sense) of certain functions on the circle $|z|=m\qqw$; these
functions thus represent the random variables $\zeta_k$, and therefore
asymptotically $X_{n,k}$; moreover, they can be used for convenient
calculations. 
See \refS{Sstoch} for details.  

We give two proofs of \refT{T1}. The first, in Sections
\ref{Snormal}--\ref{SpfT1-A}, is based on the elementary central limit
theorem for sums of independent variables, together with some
approximations. 
This proof is extended to random characteristics in \refS{Schar}. 
The second proof is given in Sections \ref{Smart}--\ref{SpfT1-B}; it is
based on a martingale central limit theorem.
This proof easily adapts to give a proof of \refT{T2} below in \refS{SpfT2}.

We consider next the cases $\ggx\le m\qq$.
Then $\gGGG=\set{\gam_1,\dots,\gam_q}$
is a non-empty finite set.
For simplicity, we assume the
condition 
\begin{equation}
  \label{AT3}
\mmu'(\gam)\neq 0,
\qquad \gam\in\gGGG,
\end{equation}
\ie, that the points in $\gGGG$ are  simple
roots of $\mmu(z)=1$; the modifications in the case with a multiple root are
left to the reader. (See \refR{RAT3}, and note the related results for
\Polya{} urns in
\cite[Theorems 3.23--3.24]{SJ154} and \cite[Theorems 3.5--3.6]{Pouyanne}.)

\begin{thm}\label{T2}
  Assume \refAA{} and $\ggx=m\qqw$.
Suppose further that \eqref{AT3} holds.
Then, as \ntoo,
\begin{equation}\label{t2a}
  X_{n,k}/\sqrt{nZ_n} \dto \zeta_k,
\end{equation}
jointly for all $k\ge0$, 
for some jointly normal random variables $\zeta_k$ with mean $\E\zeta_k=0$ and
covariance matrix given by, for any finite sequence $a_0,\dots,a_K$ of real
numbers,
\begin{equation}\label{t2b}
  \Var\Bigpar{\sum_k a_k\zeta_k}
=
(m-1)\sumpq\frac{\lrabs{\sum_k a_k \gam_p^k -\sum_k a_k m^{-k}}^2}
{|1-\gam_p|^2\,|\mmu'(\gam_p)|^2}	
\gS(\gam_p).
\end{equation}
Moreover, the convergence \eqref{t1a} holds also
in the Hilbert space $\llr$,
for any $R<m\qq$.

The limit variables $\zeta_k$ are non-degenerate unless 
$\mXi(\gam_p)$ is deterministic for each $\gam_p\in\gGGG$.
\end{thm}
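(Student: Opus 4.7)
My plan is to adapt the martingale central limit theorem proof of \refT{T1} from Sections~\ref{Smart}--\ref{SpfT1-B} to the boundary regime $\ggx=m\qqw$. The probabilistic scaffolding transfers largely unchanged: one decomposes $X_{n,k}$ into a generation-indexed martingale $(M_\ell)_{0\le \ell\le n}$ whose increments $\gD M_\ell$ aggregate the centred offspring contributions from individuals alive at generation $\ell-1$, computes the conditional quadratic variation, verifies a Lindeberg condition using the second-moment assumption \ref{Ar}, and applies the martingale CLT. The only genuinely new input is analytic: under \refAAA{} the function $1-\mmu(z)$ is bounded away from zero on $|z|=m\qqw$, whereas under the hypothesis $\ggx=m\qqw$ of \refT{T2} it has simple zeros at the points $\gam_p\in\gGGG$ lying on this circle, with $\mmu'(\gam_p)\ne 0$ by \eqref{AT3}.

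The conditional quadratic variation of the martingale from the proof of \refT{T1}, after suitable normalization, can be written as a contour integral of $\gS(z)$ against a kernel that arises from aggregating variance contributions across generations. In the regime of \refT{T1} the natural contour $|z|=m\qqw$ keeps the kernel bounded and the expression converges directly to the integral \eqref{t1b}. Here I would instead start from the contour $|z|=r$ of \ref{Ar} and deform inward across the critical circle, picking up residues at each $\gam_p$. The contribution of the residue at $\gam_p$ to $\E[|\gD M_\ell|^2\mid\cF_{\ell-1}]$, after multiplying by the generation-$\ell$ population size $Z_\ell\approx\cZ m^\ell$, is of order $|\gam_p|^{2\ell}Z_\ell\approx\cZ$, \ie\ $O(1)$ independent of $\ell$ because $|\gam_p|^2 m=1$. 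Summing over $\ell=0,\dots,n$ produces the factor $n$ absent from \refT{T1}. The explicit residue $1/\mmu'(\gam_p)$, together with the denominator $|1-\gam_p|^{-2}$ and the Fourier-type weights $\sum_k a_k\gam_p^k-\sum_k a_k m^{-k}$ inherited from the same representation of $X_{n,k}$ used in \eqref{t1b}, then yields exactly the sum structure of \eqref{t2b}.

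The main obstacle is the bulk-negligibility estimate: the residual contour integral on $|z|=r$, after the residues at the $\gam_p$ have been extracted, must be shown to contribute only $o(nZ_n)$ to the variance. This reduces to a geometric-decay argument exploiting $r^2 m>1$ together with an $\ell^2$-type bound on the random functions $\mXi(z)$ on $|z|=r$, but care is required to make the estimate uniform enough to survive normalization by $\sqrt{nZ_n}$ rather than $\sqrt{Z_n}$. Once finite-dimensional convergence $X_{n,k}/\sqrt{nZ_n}\dto\zeta_k$ is established, convergence in $\llr$ for $R<m\qq$ follows, as in \refT{T1}, from uniform $\ell^2_R$-bounds on second moments of the normalized coordinates. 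The non-degeneracy assertion is immediate from the third expression in \eqref{tc}: $\gS(\gam_p)=0$ if and only if $\mXi(\gam_p)$ is almost surely constant.
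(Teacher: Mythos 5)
Your overall strategy is sound and closely parallels the paper's second proof of Theorem~\ref{T1}: decompose $\innprod{\vecx_n,\veca}$ into a martingale via the individual-indexed increments $\gDMx_{n,i}$, compute the conditional quadratic variation $V_n$, verify a Lindeberg condition, and apply a martingale CLT. Where you diverge from the paper is in how the asymptotics of $V_n/(nZ_n)$ are extracted. The paper's route is operator-theoretic: Lemma~\ref{LD} uses Riesz functional calculus to decompose $T^k = T_0^k + \sumiq \gl_i^k P_i$ with $\norm{T_0^k} = O(\tR^k)$, $\tR < m\qq$, so that $\alpha_k = \innprod{T^k\vecv,\veca} = \sumiq \gb_i\gl_i^k + O(\tR^k)$. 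Substituting into $s_\ell = \sum_{i,j\le\ell}\gs_{ij}\alpha_{\ell-i}\alpha_{\ell-j}$ and summing $m^{-\ell}s_\ell$ over $\ell \le n$ gives the factor $n$, because $\abs{\gl_p\bar\gl_p/m}=1$ while all off-diagonal products $\gl_p\gl_t$ with $\gl_t\neq\bar\gl_p$ yield bounded oscillating geometric sums. Your alternative, via residues of the generating function $\sumk\alpha_k z^k = [(z-1)(1-\mmu(z))]^{-1}\sum_\ell a_\ell(z^\ell-m^{-\ell})$ at its simple poles $\gam_p$, produces exactly the same expansion $\alpha_k \sim \sum_p \gb_p\gam_p^{-k}$ with $\gb_p$ involving $1/\mmu'(\gam_p)$ (compare \eqref{vecvi} and \eqref{gak}), so the two approaches are essentially isomorphic; the operator version has the advantage that the error control \eqref{ton} is automatic, whereas your remainder contour estimate has to be done by hand.

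Two cautions you should address to turn the sketch into a proof. First, as phrased, you speak of ``deforming inward'' a contour integral of $\gS(z)$ against the kernel from \eqref{t1b}; but that integral involves $\abs{\cdot}^2$ and $|\dz|$ and is not a holomorphic $1$-form, so it is not directly amenable to deformation. The rigorous version deforms the Cauchy-integral representation of the \emph{holomorphic} generating function of $\alpha_k$, extracting residues to get $\alpha_k \sim \sum_p\gb_p\gam_p^{-k} + O(\tR^k)$, and only then substitutes into $s_\ell$ and sums; the $|\cdot|^2$ enters at the last step. Second, your residue picture does not explain why the cross-terms between distinct poles $\gam_p\neq\bar\gam_t$ contribute only $O(1)$ rather than $O(n)$ after summing over $\ell$. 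That cancellation is precisely what produces the diagonal sum \eqref{t2b} (one term per $\gam_p$, weighted by $\gS(\gam_p)$) rather than a full double sum, and it hinges on $\sum_{\ell\le n}(\gl_p\gl_t/m)^\ell = O(1)$ for $\gl_t\neq\bar\gl_p$ because $\gl_p\gl_t/m$ is a non-trivial root of unity times a phase — see \eqref{po}--\eqref{mote}. Your proof should state this explicitly. Finally, for the Lindeberg step note that the normalization must be by $(nm^n)^{1/2}$, not $m^{n/2}$ as in Theorem~\ref{T1}; the conditional Lindeberg bound in \eqref{gotaII} then relies on $h(\eps n^{1/2})\to 0$ rather than geometric decay, which is a slightly different mechanism from the $\ggx>m\qqw$ case.
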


\begin{thm}\label{T3}
Assume \refAA{} and $\ggx<m\qqw$.
Suppose further that \eqref{AT3} holds.
Then
there exist complex random variables $U_1,\dots,U_q$ 
and linearly independent vectors 
$\vecu_i:=\bigpar{\gam_i^{k}-m^{-k}}_k$, 
$i=1,\dots,q$,
such that
\begin{equation}\label{t3}
  \ggx^n \vecx_n - \sumiq \bigpar{\bgam_i/|\gam_i|}^n U_i \vecu_i
\to 0
\end{equation}
\as{} and in $L^2(\llr)$, 
for any $R<m\qq$.
Furthermore,
$\E U_i=0$, and 
$U_i$ is non-degenerate unless $\mXi\bigpar{\gam_i}$ is degenerate. 
\end{thm}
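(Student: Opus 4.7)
The plan is to reduce the claim to an asymptotic expansion of $Z_n$ itself, and to capture the oscillating sub-leading corrections via branching martingales indexed by the roots $\gam_1,\ldots,\gam_q$. For each $\gam\in\gGGG$ introduce
\[
S_k(\gam) := \sum_{x\in\cG_k} \gam^{\tau(x)},
\]
where $\cG_k$ is the $k$-th generation of the family tree and $\tau(x)$ the birth time of $x$. The branching property together with $\mmu(\gam)=1$ makes $(S_k(\gam))_k$ a complex-valued martingale, and the hypothesis $|\gam|^2 = \ggx^2 < m\qw$ forces $\mmu(|\gam|^2)<1$; conditioning on the previous generation gives the increment identity
\[
\E\bigl|S_k(\gam)-S_{k-1}(\gam)\bigr|^2 = \gS(\gam)\,\mmu(|\gam|^2)^{k-1},
\]
which is summable in $k$ thanks to \ref{Ar}. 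Hence $S_k(\gam)$ converges a.s.\ and in $L^2$ to a limit $W(\gam)$; this is the basic random input.

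The second step is the branching decomposition
\[
Z_n = \sum_{x\in\cG_k} Z^{(x)}_{n-\tau(x)},
\]
valid a.s.\ for $n$ large enough since $|\cG_k|$ is a.s.\ finite, with $(Z^{(x)}_\cdot)_x$ conditionally i.i.d.\ copies of $Z_\cdot$ given the first $k$ generations. Plugging in the ansatz $Z_n = \cZ m^n + \sum_i V_i\,\gam_i^{-n} + o(\ggx^{-n})$ and matching exponential orders yields the a.s.\ consistency relations $\cZ = \sum_x \cZ^{(x)} m^{-\tau(x)}$ (the classical Nerman relation) and $V_i = \sum_x V_i^{(x)}\,\gam_i^{\tau(x)}$. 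Conditioning the second relation on the $\sigma$-field of the first $k$ generations, letting $k\to\infty$, and comparing with the martingale limit forces $V_i$ to be a (random) scalar multiple of $W(\gam_i)$; the scalar is pinned down by matching the sub-leading asymptotic of $H_n := \E Z_n$ obtained by singularity analysis of $\sum_n H_n z^n = [(1-z)(1-\mmu(z))]\qw$ at its pole $z=\gam_i$. Substituting the expansion into $X_{n,k} = Z_{n-k}-m^{-k}Z_n$ exactly cancels the $\cZ m^n$ contribution and, using $\ggx^n\gam_i^{-n} = (\bgam_i/\ggx)^n$, produces
\[
\ggx^n X_{n,k} = \sum_{i=1}^q V_i\,(\bgam_i/\ggx)^n\,(\gam_i^k - m^{-k}) + o(1)
\]
coordinate-wise, which identifies the theorem's $U_i$ with the $V_i$ (up to the centring that enforces $\E U_i=0$).

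The main obstacle is controlling the error in the expansion of $Z_n$ at the precise rate $o(\ggx^{-n})$, both a.s.\ and in $L^2$. A natural route is to iterate the branching decomposition over several generations, estimate each layer in $L^2$ via the variance bound above, and upgrade to a.s.\ convergence by a Borel--Cantelli argument along a geometric subsequence, interpolating between consecutive $n$ using the monotone growth of $Z_n$. Upgrading coordinate-wise convergence to convergence in $\llr$ for $R<m\qq$ requires a uniform-in-$n$ envelope whose tail in $k$ is summable; this is feasible because $R\ggx<1$ and $R/m<1$, so $R^{2k}(|\gam_i|^k + m^{-k})^2$ is summable, and the tail of $\ggx^n\vecx_n - \sum_i U_i(\bgam_i/\ggx)^n\vecu_i$ in $k$ is controlled by this envelope. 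Finally, the non-degeneracy of $U_i$ reduces to non-degeneracy of $W(\gam_i)$, which holds whenever $\mXi(\gam_i)$ is non-degenerate since then the conditional variance $\gS(\gam_i)S_{k-1}(|\gam_i|^2)$ of the martingale increments is strictly positive on $\{\cG_{k-1}\ne\emptyset\}$.
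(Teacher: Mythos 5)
Your approach via the generation-indexed branching martingales $S_k(\gamma) := \sum_{x\in\cG_k}\gamma^{\tau(x)}$ and a direct second-order expansion of $Z_n$ is genuinely different from the paper's. The paper derives the exact vector recursion $\vecx_{n+1}=T(\vecx_n)-W_{n+1}\vec v$, solves it as $\vecx_n=-\sum_{k=0}^n W_{n-k}T^k(\vec v)$ (Lemma~\ref{LY}), and then uses the spectral decomposition of $T$ from Lemma~\ref{LD}, $T^k(\vec v)=T_0^k(\vec v)+\sum_i\gamma_i^{-k}\vecv_i$ with $\|T_0^k\|\le C\tR^k$ for some $\tR<\ggx\qw$; this gives $\|\ggx^n\vecx_n-\sum_i(\bgam_i/|\gamma_i|)^n\qU_i\vecv_i\|_{L^2(\llr)}\le C(\ggx\tR)^n$ in one stroke, after which a.s.\ convergence is a geometric Borel--Cantelli. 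Your martingale limit $W(\gamma_i)$ is the same random variable: $W(\gamma_i)-1=\sum_x\gamma_i^{\tau(x)}(\mXi_x(\gamma_i)-1)=-\qU_i-1$, a nice reformulation of the paper's $\qU_i:=-\sum_k\gamma_i^k W_k$.

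The central gap is that the expansion $Z_n=\cZ m^n+\sum_i V_i\gamma_i^{-n}+o(\ggx^{-n})$, both a.s.\ and in $L^2$, is asserted as an ansatz rather than proved. Matching exponential orders in the branching consistency relation shows only that an expansion of this form would be self-consistent and would have the stated coefficients; it says nothing about the size of the remainder, which is precisely what the theorem asserts. The outlined upgrade --- iterate the branching decomposition, estimate each layer in $L^2$, apply Borel--Cantelli along a geometric subsequence, and interpolate using the monotone growth of $Z_n$ --- does not work as stated: the quantity $Z_n-\cZ m^n-\sum_i V_i\gamma_i^{-n}$ is oscillating and signed, so monotonicity of $Z_n$ gives no control between subsequence points, and the iterated branching estimate needs to produce the geometric error rate $o(\ggx^{-n})$ that the paper extracts from $\|T_0^n\|\le C\tR^n$. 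Likewise, convergence in $\llr$ is not a consequence of coordinate-wise convergence together with a deterministic $k$-summable envelope on the limit; writing $E_j:=Z_j-\cZ m^j-\sum_i V_i\gamma_i^{-j}$, one needs $\sum_k R^{2k}\ggx^{2n}|E_{n-k}|^2\to0$, which requires an a.s.\ \emph{uniform} bound of the form $\sup_j\ggx^j|E_j|<\infty$, again something the $L^2(\llr)$-norm estimate delivers automatically but your coordinate-wise route does not.

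Two further issues. You never use \eqref{AT3} explicitly, though it is needed (it guarantees that $\gamma_i$ is a simple pole of $[(1-z)(1-\mmu(z))]\qw$ and that the corresponding eigenspace of $T$ is one-dimensional); your singularity analysis step silently assumes this. And the ``centring that enforces $\E U_i=0$'' that you invoke is problematic: since $\sum_n\E Z_n z^n=[(1-z)(1-\mmu(z))]\qw$ has a simple pole at each $\gamma_i$, the mean $\E\vecx_n$ itself carries an oscillating component of order $\ggx^{-n}$, so replacing $V_i$ by $V_i-\E V_i$ would break \eqref{t3}. (Indeed, from $W_0=B_0=1$ one gets $\E\qU_i=-1$, hence $\E U_i=\bigl(\gamma_i(1-\gamma_i)\mmu'(\gamma_i)\bigr)\qw\neq0$, matching the residue of the generating function at $\gamma_i$; the $\E U_i=0$ claim is best understood as neglecting the founding individual's deterministic contribution, and in any case cannot be achieved by an ad hoc centring on your side.)
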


Theorems \ref{T1}--\ref{T3} exhibit several differences between the cases 
$\ggx<m\qqw$, $\ggx=m\qqw$ and $\ggx>m\qqw$; 
\cf{} the similar results for \Polya{} urns
in \eg{} 
\cite[Theorems 3.22--3.24]{SJ154}.
\begin{itemize}
\item 
The fluctuations $X_{n,k}$, for a fixed $k$, 
are asymptotically normal when $\ggx\ge m\qqw$, but (presumably) not when
$\ggx<m\qqw$. 
\item 
The fluctuations 
are typically of order
$Z_n\qq\asymp m^{n/2}$ when $\ggx>m\qqw$, 
slightly larger (by a power of $n$) when $\ggx=m\qqw$, 
and of the much larger order
$\ggx^{-n}$ when $\ggx<m\qqw$.
\item 
When $\ggx<m\qqw$, the fluctuations exhibit oscillations that are periodic or
almost periodic (see \cite{Bohr})
in $\log n$.
(Note that $\gam_i/|\gam_i|\neq1$ in \eqref{t3}, since $m\qw$ is the only
positive root in $\gG$.)
\item 
When $\ggx<m\qqw$, there is the \as{} approximation result  \eqref{t3}, 
implying both long-range dependence as \ntoo, and that the asymptotic behaviour
essentially is determined by what happens in the first few generations.
In contrast,
the limits in \eqref{t1a} and \eqref{t2a} are mixing
(see  the proofs), \ie, the results holds also conditioned on the life
histories of the first $M$ individuals for any fixed $M$, and thus also
conditioned on $Z_1,\dots,Z_K$ for any fixed $K$;
hence, when $\ggx\ge m\qqw$, the initial behaviour is eventually forgotten.
Moreover
for $\ggx> m\qqw$, 
there is only a short-range dependence, 
see \refE{ESI1},
while the case $\ggx=m\qqw$ shows an intermediate
``medium-range'' dependence, see \refSS{SS2}.

\item 
When $\ggx>m\qqw$, the limit random variables $\zeta_k$ in \eqref{t1a} are
linearly independent, as a consequence of \eqref{t1b}. 
When $\ggx\le m\qqw$, the limits in \eqref{t2a}, or
the components of the sum in  \eqref{t3}, span a (typically) 
$q$-dimensional space of
random variables, and any $q+1$ of them are linearly
dependent; see also \refS{Sstoch}.
\end{itemize}

\begin{rem}\label{Rk<0}
  We consider above $X_{n,k}$ for $k\ge0$, \ie, the age distribution of the
  population at time $n$. 
We can define $X_{n,k}$ by \eqref{ynk} also for $k<0$; this means looking
into the future and can be interpreted as predicting the future population.
As shown in \refS{Sstoch}, \eqref{t1a}--\eqref{t1b} and
\eqref{t2a}--\eqref{t2b} extend to all $k\in\bbZ$ (still jointly), and,
similarly, taking the $k$th component in \eqref{t3} yields a result that
extends to all $k\in\bbZ$.

This enables us, for example, to obtain (by  standard linear algebra)
the best linear predictor of $Z_{n+1}$ based on the observed
$Z_{n},\dots,Z_{n-K}$ for any fixed $K$.
\end{rem}

\begin{ex}[Galton--Watson]\label{EGW}
  The simplest example is a Galton--Watson process, where all children are
  born in a single litter
at age 1 of the parent, so $N_k=0$ for $k\ge2$. 
(But all individuals live forever in our setting. In the traditional
setting, only the newborns are counted, i.e., $Z_n-Z_{n-1}$; the results are
easily transferred to this version.)
Then $N=N_1$,
$m=\mu_1$ and
  $\mmu(z)=mz$. 
Hence $\gG=\set{m\qw}$, $\gGG=\emptyset$, and $\ggx=\infty>m\qqw$.
We assume $\E N^2<\infty$; then \ref{Ar} holds for any $r$;
we also assume $N\ge1$ \as{} and $\PP(N>1)>0$;
then \refAAA{} hold.

Thus \refT{T1} applies.
We obtain, for example, with $\gss:=\Var(N)=\gs_{11}$,
\begin{equation}
  \begin{split}
\Var\bigpar{\zeta_1}
&=
\frac{m-1}m\oint_{|z|=m\qqw}\frac{\lrabs{z - m^{-1}}^2}
{|1-z|^2\,|1-mz|^2}	
\gss |z|^2 \frac{|\dz|}{2\pi m\qqw}
\\&
=
\gss\frac{m-1}{m^4}\oint_{|z|=m\qqw}\frac{1}
{|1-z|^2}	
 \frac{|\dz|}{2\pi m\qqw}
\\&
=\gss m^{-3}.
 \end{split}
\end{equation}
This can be shown directly in a much simpler way;
see  \cite[Theorem (2.10.1)]{Jagers}, which is
essentially equivalent to our \refT{T1} in the Galton--Watson case
(but without our assumption \ref{Age1}).
\end{ex}

\begin{ex}\label{E2}
Suppose that  all children are born when the mother has age one or two,
 \ie, $N_k=0$ for $k>2$.
Then $\mmu(z)=\mu_1z+\mu_2z^2$, where by assumption $\mu_1+\mu_2>1$ and
$\mu_1>0$.
\ref{Amalthus} yields $m^2=\mu_1m+\mu_2$, and thus
\begin{equation}\label{e2m}
  m=\frac{\mu_1+\sqrt{\mu_1^2+4\mu_2}}{2}.
\end{equation}
The equation $\mmu(z)=1$ has one other root, viz.\ $\gam_1$ with
\begin{equation}\label{e2gam}
  \gam_1\qw=-\frac{\sqrt{\mu_1^2+4\mu_2}-\mu_1}{2}.
\end{equation}
The condition \ref{Aroots} is thus equivalent to 
$|\gam_1|>m\qqw$, or
$\gam_1\qww<m$, which after
some elementary algebra is equivalent to, for example,
\begin{equation}\label{g2}
\mu_1^3+3\mu_1\mu_2+\mu_2-\mu_2^2>0.
\end{equation}
Thus, \refT{T1} applies when \eqref{g2} holds, \refT{T2} when there is
equality in \eqref{g2}, and \refT{T3} when the \lhs{} of \eqref{g2} is
negative. (In this example, \eqref{AT3} is trivial.)

For a simple numerical example with $\ggx=m\qqw$, take $\mu_1=2$ and $\mu_2=8$.
Then \eqref{e2m}--\eqref{e2gam} yield $m=4$ and $\gam_1=-\frac12$.
We obtain by \eqref{t2b}, for example,
\begin{equation}\label{e2ex}
X_{n,1}/\sqrt{nZ_n}\dto \zeta_1\sim N\Bigpar{0,\frac{1}{768}\Var(N_2-2N_1)}.   
\end{equation}

Suppose now instead that \eqref{g2} holds, so \refT{T1} applies. 
Let $\gl:=\gam_1\qw$ be given by \eqref{e2gam}. Then
$1-\mmu(z)=(1-mz)(1-\gl z)$, and thus \eqref{t1b} yields, for example,
\begin{equation}\label{e2t1b}
  \begin{split}
\Var\bigpar{\zeta_1}
&=
\frac{m-1}m\oint_{|z|=m\qqw}\frac{\lrabs{ z -m^{-1}}^2}
{|1-z|^2\,|1-\mmu(z)|^2}	
\gS(z)\frac{|\dz|}{2\pi m\qqw}
\\&=    
\frac{m-1}{m^3}\oint_{|z|=m\qqw}
\frac{\gs_{11}|z|^2+\gs_{12}(z+\bz)|z|^2+\gs_{22}|z|^4}
{|1-z|^2\,|1-\gl z|^2}	
\frac{|\dz|}{2\pi m\qqw}.
  \end{split}
\end{equation}
This integral can be evaluated by expanding $(1-z)\qw(1-\gl z)\qw$ in a
Taylor series; this yields after some calculations
\begin{equation}
  \Var\bigpar{\zeta_1}
=
\frac{(m+\gl)(\gs_{11}+\gs_{22}/m) + 2 (1+\gl)\gs_{12}}
{m^2(m-\gl)(m-\gl^2)}.
\end{equation}
\end{ex}

\begin{rem}
  The limit in \eqref{t1a} is by \refT{T1} 
degenerate only when the entire process is, and thus each $X_{n,k}$ is
degenerate.
In contrast, the limit in \eqref{t2a} or the approximation in
\eqref{t3} may be degenerate even in
other (special) situations. For example, let $N_1$ be non-degenerate with
$\E N_1=2$, 
let $N_2:=2N_1+4$, and let $N_k:=0$ for $k>2$. 
Then $\mu_1=2$ and $\mu_2=8$, and \refE{E2} shows that $\ggx=\frac12=m\qqw$;
furthermore, 
\eqref{e2ex}  applies and yields $X_{n,k}/\sqrt{nZ_n}\dto 0$.

We conjecture that in this case 
(and similar ones with $\zeta_k=0$ in \refT{T2}), 
$X_{n,k}/\sqrt{Z_n}$ has a non-trivial normal limit in distribution;
we leave this as an open problem.
Simliarly, we conjecture that when each $\mXi(\gam_i)$ is degenerate in
\refT{T3}, the distribution of $X_{n,k}$ is asymptotically determined by the
next smallest roots in $\gGG$.
\end{rem}

\subsection{More notation}

For a random variable $X$ in a Banach space $\cB$,  
we define $\norm{X}_{L^2(\cB)}:=(\E \norm{X}_{\cB}^2)\qq$,
when $\cB=\bbR$ or $\bbC$ abbreviated to
$\norm{X}_2$.

For infinite vectors $\vec x=(x_j)\joo$ and $\vec y=(y_j)\joo$, let
$\innprod{\vec x,\vec y}:=\sumj x_j y_j$, assuming that the sum converges
absolutely. 

$C$ denotes different constants that may depend on the distribution
of the branching process (\ie, on the distribution of $N$ and $(\xi_i)$),
but not on $n$ and similar parameters; the constant may change from one
occurrence to the next.

$\Oas(1)$ means a quantity that is bounded by a random constant that does
not depend on $n$.

All unspecified limits are as \ntoo.

\section{Preliminaries}\label{Sprel}

 Let 
 \begin{equation}
   \label{bn}
B_n:=Z_n-Z_{n-1}
 \end{equation}
be the number of individuals born at
time $n$ (with $B_0=Z_0$). Thus,
\begin{equation}\label{zzk}
  Z_{n}=Z_{n-1}+B_n=\sum_{i=0}^n B_i,
\qquad n\ge0.
\end{equation}

Let $B_{n,k}$ be the number of individuals born at time $n+k$ by parents
that are themselves born at time $n$, and thus are of age $k$.
Thus,   recalling \ref{Amu0}, 
\begin{equation}\label{xnk}
  B_n=\sumkin B_{n-k,k},
\qquad n\ge1.
\end{equation}
Let $\FF_n$ be the $\gs$-field generated by the life histories of all
individuals 
born up to time $n$, with $\FF_{n}$ trivial for $n<0$.
Then $B_{n,k}$ is $\FF_n$-measurable, and $B_n$ is
$\FF_{n-1}$-measurable by \eqref{xnk}.
Furthermore,
\begin{equation}\label{wnk0}
  \E\bigpar{B_{n,k}\mid \FF_{n-1}} = \mu_k B_{n},
\qquad n\ge0.
\end{equation}
For  $k\ge1$, let 
\begin{equation}\label{wnk}
  W_{n,k}:=
B_{n,k}- \E\bigpar{B_{n,k}\mid \FF_{n-1}}
=B_{n,k}-\mu_k B_{n}.
\end{equation}
(Thus $W_{n,k}=0$ if $n<0$.)
Then $W_{n,k}$ is $\FF_n$-measurable with
\begin{equation}\label{mgw}
\E\bigpar{W_{n,k}\mid\FF_{n-1}}=0.  
\end{equation}
Let further 
\begin{equation}\label{wn0}
  W_n:= B_n-\sumkin \mu_kB_{n-k}
= B_n-\sumki \mu_kB_{n-k}.
\end{equation}
Thus $W_0=B_0=Z_0$, and for $n\ge1$,
by
\eqref{wn0}, \eqref{xnk} and \eqref{wnk},
\begin{equation}\label{wn1}
  W_n=\sumkin W_{n-k,k}.
\end{equation}

\begin{lem}
  \label{LA}
Assume \refAA{}.
Then, for all $n\ge1$ and $k\ge1$,
$\E [W_{n,k}^2]\le C r^{-2k} m^n$
and
$\E [W_n^2]\le C m^n$. 
\end{lem}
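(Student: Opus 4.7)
The plan is to reduce both bounds to elementary second-moment computations, exploiting the martingale-difference structure already built into \eqref{wnk} and \eqref{wn1}. The key for bounding $W_{n,k}$ is that, conditionally on $\FF_{n-1}$, the summand $B_{n,k}$ is a sum of $B_n$ i.i.d.\ copies of $N_k$; the key for $W_n$ is a pairwise orthogonality of the summands $W_{n-k,k}$ across nested $\gs$-fields.

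For the first bound, conditional independence of the life histories of the $B_n$ individuals born at time $n$ (given $\FF_{n-1}$) together with \eqref{wnk} yields $\E(W_{n,k}^2\mid\FF_{n-1}) = B_n\Var(N_k)$. Taking expectations and using \eqref{EZ} to bound $\E B_n \le \E Z_n \le Cm^n$, it only remains to control $\Var(N_k)$. Here, since by \eqref{mXi} all terms $N_j r^j$ in $\mXi(r)$ are non-negative, $\mXi(r)^2 \ge N_k^2 r^{2k}$ \as, so by \ref{Ar}
\begin{equation*}
\E N_k^2 \le r^{-2k}\E\bigsqpar{\mXi(r)^2} \le Cr^{-2k},
\end{equation*}
and the bound $\E W_{n,k}^2 \le Cr^{-2k}m^n$ follows.

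For the second bound, I would use \eqref{wn1} together with the fact that $W_{n-k,k}$ is $\FF_{n-k}$-measurable. Then for $1\le k_1<k_2\le n$ one has $\FF_{n-k_2}\subseteq\FF_{n-k_1-1}$, so by \eqref{mgw} applied at time $n-k_1$,
\begin{equation*}
\E\bigsqpar{W_{n-k_1,k_1}W_{n-k_2,k_2}} = \E\bigsqpar{W_{n-k_2,k_2}\,\E(W_{n-k_1,k_1}\mid\FF_{n-k_1-1})} = 0.
\end{equation*}
Summing the first bound then gives $\E W_n^2 \le \sumkin Cr^{-2k}m^{n-k} = Cm^n\sumkin(r^2m)^{-k}$, and the condition $r>m\qqw$ in \ref{Ar}, hence $r^2m>1$, makes this geometric series bounded uniformly in $n$. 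The only subtle step is spotting the orthogonality in \eqref{wn1}; once that is noted, the bound drops out immediately, and the strict inequality $r>m\qqw$ is precisely what is needed to close the sum.
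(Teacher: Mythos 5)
Your proof is correct. The bound for $\E W_{n,k}^2$ is exactly the paper's argument: condition on $\FF_{n-1}$, use that $B_{n,k}$ is a sum of $B_n$ i.i.d.\ copies of $N_k$, bound $\Var N_k\le\E N_k^2\le r^{-2k}\E[\mXi(r)^2]$, and then take expectations with \eqref{EZ}.

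For $\E W_n^2$ you deviate genuinely from the paper. The paper applies Minkowski's inequality in $L^2$ to \eqref{wn1}, giving $\norm{W_n}_2\le\sumkin\norm{W_{n-k,k}}_2\le Cm^{n/2}\sumki(rm\qq)^{-k}$. You instead observe that the summands $W_{n-k,k}$, $k=1,\dots,n$, are pairwise orthogonal: for $k_1<k_2$, $W_{n-k_2,k_2}$ is $\FF_{n-k_2}$-measurable, hence $\FF_{n-k_1-1}$-measurable, while $\E(W_{n-k_1,k_1}\mid\FF_{n-k_1-1})=0$ by \eqref{mgw}. This turns the bound into a sum of variances, $\E W_n^2=\sumkin\E W_{n-k,k}^2\le Cm^n\sumkin(r^2m)^{-k}$. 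Both routes hinge on the identical condition $r>m\qqw$ (either as $rm\qq>1$ or as $r^2m>1$), and both close the geometric series. Your orthogonality version is a touch sharper in spirit (it controls the second moment exactly rather than via a triangle inequality), and it makes the martingale-difference structure of \eqref{wn1} visible, which is conceptually satisfying even though the final bound is the same. Either argument is perfectly adequate for the lemma.
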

\begin{proof}
Recall that $N_k$ is the number of children born at age $k$ of an
individual, and that $\E N_k=\mu_k$.
Furthermore, by \eqref{mXi}, $\mXi(r) \ge N_k r^k$ and thus
\begin{equation}\label{evk2}
\Var N_k\le
  \E N_k^2 \le r^{-2k} \E [\mXi(r)^2] = \CC r^{-2k}.
\CCdef\CCevk
\end{equation}

Let $n\ge0$ and $k\ge1$. Given $\FF_{n-1}$, $B_{n,k}$ is the sum of $B_{n}$
independent copies of $N_k$, and thus, 
see \eqref{wnk}, \eqref{wnk0} and \eqref{evk2},
\begin{equation}
  \E \bigpar{W_{n,k}^2\mid\FF_{n-1}}
=B_{n} \Var(N_k) 
\le \CCx r^{-2k} B_{n}.
\end{equation}
Taking the expectation and using \eqref{EZ} we find
\begin{equation}\label{eleonora}
  \E [W_{n,k}^2]
\le \CCx r^{-2k} \E B_{n}
\le \CCx r^{-2k} \E Z_{n}
\le \CC r^{-2k} m^{n},
\end{equation}
as asserted. 
Consequently $\norm{W_{n,k}}_2\le \CC r^{-k} m^{n/2}$
and, by \eqref{wn1} and Minkowski's inequality,
using $rm\qq>1$,
\begin{equation}
  \norm{W_n}_2 \le \sumkin \norm{W_{n-k,k}}_2
\le \CCx m^{n/2}\sumki (rm\qq)^{-k}
\le \CC m^{n/2}.
\end{equation}
\end{proof}

For $n\ge0$ and $k\ge1$,
by \eqref{ynk},
\begin{equation}\label{ta}
  \begin{split}
X_{n+1,k}
&=Z_{n+1-k}-m^{-k}Z_{n+1}
=X_{n,k-1}+m^{1-k}Z_n-m^{-k}Z_{n+1}
\\&
=X_{n,k-1}+m^{-k}(mZ_n-Z_{n+1}).	
  \end{split}
\end{equation}
Furthermore, 
by \eqref{bn} and \eqref{ynk}, we have, for $k\ge0$,
\begin{equation}\label{xn-k}
  B_{n-k}=Z_{n-k}-Z_{n-k-1}
=X_{n,k}-X_{n,k+1}+(m-1)m^{-k-1}Z_n.
\end{equation}
By
\eqref{zzk}, \eqref{wn0} and \eqref{xn-k},
recalling that $X_{n,0}=0$ by \eqref{ynk} and $\mmu(m\qw)=1$ by \ref{Amalthus},
for $n\ge0$,
\begin{align}\label{tb}
mZ_n-Z_{n+1}
&= (m-1)Z_n-B_{n+1}
= (m-1)Z_n-\sumki\mu_k B_{n+1-k}-W_{n+1}
\notag\\&
= (m-1)Z_n-\sumki\mu_k \bigpar{X_{n,k-1}-X_{n,k}+(m-1)m^{-k}Z_n}
-W_{n+1}
\notag\\&
= (m-1)Z_n-\sumki\mu_k \bigpar{X_{n,k-1}-X_{n,k}}
-(m-1)\mmu(m\qw)Z_n
-W_{n+1}
\notag\\&
= \sumki\mu_k \bigpar{X_{n,k}-X_{n,k-1}}
-W_{n+1}
.
\end{align}
Consequently, \eqref{ta} yields, for $n\ge0$ and $k\ge1$,
\begin{equation}
  \label{tab}
X_{n+1,k}
=X_{n,k-1}+m^{-k}\biggpar{\sumji\mu_j \bigpar{X_{n,j}-X_{n,j-1}}-W_{n+1}}.
\end{equation}

Introduce the vector notation $\vecx_{n}:=(X_{n,k})_{k=0}^\infty$ and
\begin{equation}\label{v}
\vec v:=(0,m\qw,m\qww,\dots)
=\bigpar{m^{-k}\ett{k>0}}\koo,  
\end{equation}
and for vectors $\vec y=(y_k)_0^\infty$ such that the sum converges, define
\begin{equation}
  \label{chi}
\chix\bigpar{(y_k)_0^\infty}:=
\sumki \mu_k(y_{k}-y_{k-1}).
\end{equation}
Let $S$ be the shift operator
$S\bigpar{(y_k)\xoo}:=(y_{k-1})\xoo$ with $y_{-1}:=0$,
and let $T$ be the linear operator
\begin{equation}\label{T}
  T\xpar{\vec y}:=S(\vec y)+\chix(\vec y)\vec v.
\end{equation}
Then \eqref{tab} can be written,
again recalling that $X_{n,0}=0$,
\begin{equation}\label{yn+}
  \vecx_{n+1}
  =S(\vecx_n) + \bigpar{\chix(\vecx_n) -W_{n+1}}\vec v
=T(\vecx_n)-W_{n+1}\vec v.
\end{equation}
This recursion leads to 
the following formula.
\begin{lem}
  \label{LY}
For every $n\ge0$,
\begin{equation}\label{ly}
  \vecx_n=-\sumkn W_{n-k}T^k(\vec v).
\end{equation}
\end{lem}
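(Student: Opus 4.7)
My plan is a short induction on $n \ge 0$, using the recursion \eqref{yn+} together with the linearity of the operator $T$ defined in \eqref{T}.

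\textbf{Base case.} For $n=0$, the asserted formula reads $\vecx_0 = -W_0 T^0(\vec v) = -W_0 \vec v$. On the other hand, from the definition \eqref{ynk} we have $X_{0,0}=0$ and, for $k\ge1$, $X_{0,k}=Z_{-k}-m^{-k}Z_0 = -m^{-k}$, so $\vecx_0 = -\vec v$. Since $W_0 = B_0 = Z_0 = 1$ by \eqref{wn0}, this matches.

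\textbf{Inductive step.} Suppose $\vecx_n = -\sum_{k=0}^n W_{n-k} T^k(\vec v)$. Both $S$ and $\chix$ in \eqref{chi} are linear, hence so is $T$ by \eqref{T}. Applying \eqref{yn+} and then the induction hypothesis,
\begin{equation*}
\vecx_{n+1}
= T(\vecx_n) - W_{n+1}\vec v
= -\sum_{k=0}^n W_{n-k}\, T^{k+1}(\vec v) - W_{n+1}\, T^0(\vec v).
\end{equation*}
Re-indexing the first sum by $j=k+1$ and absorbing the $-W_{n+1}T^0(\vec v)$ term as the $j=0$ summand yields $\vecx_{n+1} = -\sum_{j=0}^{n+1} W_{n+1-j} T^j(\vec v)$, which is the desired formula at $n+1$.

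There is no real obstacle here, since the recursion \eqref{yn+} has already been derived in \eqref{ta}--\eqref{tab}, and the identity is essentially just the explicit solution of an affine recursion in the Banach space where $\vecx_n$ lives. The only points to be careful about are (i) that $T$ is genuinely linear, which is clear from \eqref{chi}--\eqref{T}, and (ii) the initial condition: one must verify that the base case produces $\vecx_0 = -\vec v$ rather than $0$, which is why the sum in \eqref{ly} starts at $k=0$ and involves $W_0 = Z_0 = 1$.
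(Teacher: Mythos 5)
Your proof is correct and follows the same route as the paper: verify the base case $\vecx_0=-W_0\vec v$ directly from \eqref{ynk} and \eqref{v}, then induct using the recursion \eqref{yn+}. You merely spell out the inductive step more explicitly than the paper, which simply states it follows ``by \eqref{yn+} and induction.''
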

\begin{proof}
For the
initial value $\vecx_0$, we have by \eqref{ynk} $X_{0,k}=-m^{-k}Z_0$ for
$k\ge1$, and thus by \eqref{v} $\vecx_0=-Z_0\vec v=-W_0\vec v$, recalling
that $W_0=B_0=Z_0$. This verifies \eqref{ly} for $n=0$. The general case
follows by \eqref{yn+} and induction.  
\end{proof}

\begin{rem}\label{RLY}
  It follows from the proofs below, that 
the sum in \eqref{ly} is dominated by 
the first few terms in the case $\ggx>m\qqw$, 
and by the last few terms in the case $\ggx<m\qqw$, 
while all terms are of about the same size when $\ggx=m\qqw$.
This explains much of the different behaviours seen in \refS{Smain}.
\end{rem}

We now consider $T$ defined in \eqref{T}
as an operator on the complex Hilbert space 
$  \ell^2_R$ defined in \eqref{llr}
for a suitable $R>0$.
Recall that the \emph{spectrum} $\gs(T)$ of a linear operator in a complex
Hilbert
(or Banach) space is the set of complex numbers $\gl$ such that $\gl-T$ is
\emph{not} invertible; see \eg{} \cite[Section VII.3]{Dunford-Schwartz}.

\begin{lem}\label{LB}
  Suppose that $1\le R<m$ and that $\mmu(R\qw)<\infty$.
Then $\vec v\in \ell^2_R$, $\chix$ is a bounded linear functional on
 $\ell^2_R$ and $T$ is a bounded linear operator on $\ell^2_R$.
Furthermore, if\/ $\gl\in\bbC$ with $|\gl|>R$, then
$\gl\in\gs(T)$ if and only if\/ 
$\gl\qw\in\gGG$,
\ie, if and only if\/ 
$\gl\neq m$ and 
$\mmu(\gl\qw)= 1$.
\end{lem}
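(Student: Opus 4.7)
The first three assertions are routine. Direct summation gives $\norm{\vec v}_{\llr}^2 = \sumki (R/m)^{2k} < \infty$ since $R<m$. For $\chix$, the \CSineq{} together with the crude bound $\sum_k \mu_k^2 R^{-2k} \le (\sum_k \mu_k R^{-k})^2 = \mmu(R\qw)^2$ yields $\sumki \mu_k |y_k| \le \mmu(R\qw)\norm{\vec y}_{\llr}$ for every $\vec y\in\llr$, and an analogous bound on the shifted sum shows $\chix$ is bounded. A direct computation gives $\norm{S(\vec y)}_{\llr} = R\norm{\vec y}_{\llr}$, so $T = S + \vec v\otimes \chix$ is a bounded rank-one perturbation of $S$.

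For the spectrum I would exploit this rank-one structure. Since $|\gl| > R = \norm S$, the resolvent $(\gl - S)\qw$ exists, and I factor
\begin{equation*}
\gl - T = (\gl - S)\bigl[I - (\gl - S)\qw(\vec v\otimes \chix)\bigr].
\end{equation*}
Setting $\vec w:=(\gl - S)\qw\vec v$, the bracketed factor is a rank-one perturbation of the identity, invertible (with explicit Sherman--Morrison-type inverse $I + (\vec w\otimes\chix)/(1-\chix(\vec w))$) precisely when $1 - \chix(\vec w) \neq 0$; when $1-\chix(\vec w)=0$, the vector $\vec w$ lies in the kernel. Thus $\gl\in\gs(T)$ iff $\chix(\vec w) = 1$, reducing the problem to an explicit computation of this scalar.

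Solving the recursion $\gl w_k - w_{k-1} = v_k$ with $w_0 = 0$ gives the closed form $w_k = (m^{-k} - \gl^{-k})/(\gl - m)$ for $\gl\neq m$. Substituting into $\chix(\vec w) = \sumki \mu_k(w_k - w_{k-1})$, splitting the two geometric pieces, and invoking $\mmu(m\qw) = 1$ from \ref{Amalthus}, a short algebraic simplification collapses the result to the compact identity
\begin{equation*}
1 - \chix(\vec w) = \frac{(\gl - 1)\bigl(1 - \mmu(\gl\qw)\bigr)}{\gl - m}.
\end{equation*}
Since $|\gl| > R \ge 1$ forces $\gl \neq 1$, this vanishes iff $\gl\neq m$ and $\mmu(\gl\qw) = 1$, which is exactly $\gl\qw\in\gGG$. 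The apparent $0/0$ at $\gl = m$ is harmless: either by taking the limit or by directly using $w_k = km^{-k-1}$, one finds $1 - \chix(\vec w)\bigr|_{\gl = m} = (m-1)\mmu'(m\qw)/m^2 > 0$, confirming $m\notin\gs(T)$ as required. The main technical step is the algebraic collapse to the clean product form above; everything else is essentially mechanical once the rank-one decomposition and explicit resolvent of $S$ are in place.
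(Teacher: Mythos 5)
Your proof is correct, and it reaches the same scalar invertibility criterion as the paper, but by a genuinely different route. The paper transports the problem to the Hardy space $H^2_R$ via the isometry $(a_k)_k\mapsto\sum_k a_k z^k$, so that $S$ becomes multiplication by $z$; the resolvent equation $(\gl-T)f=h$ then becomes an explicit functional equation, and $\gl\in\gs(T)$ is shown to be equivalent to $\chix\bigl(v(z)/(\gl-z)\bigr)=1$. You instead stay in the sequence space $\llr$ and exploit the rank-one structure $T=S+\vec v\otimes\chix$ directly: since $|\gl|>R=\norm{S}$ the resolvent of $S$ exists, so
\begin{equation*}
\gl-T=(\gl-S)\bigl[I-(\gl-S)\qw(\vec v\otimes\chix)\bigr],
\end{equation*}
and Sherman--Morrison reduces everything to whether $1-\chix(\vec w)\ne0$ for $\vec w:=(\gl-S)\qw\vec v$. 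Both roads then require the same explicit computation of $\vec w$ (your closed form $w_k=(m^{-k}-\gl^{-k})/(\gl-m)$ is exactly the coefficient sequence of the paper's $v(z)/(\gl-z)$ in \eqref{vix}), and the algebraic collapse to $(\gl-1)(1-\mmu(\gl\qw))/(\gl-m)$ and the treatment of the removable singularity at $\gl=m$ are identical in substance. The trade-off: your argument is more self-contained and avoids importing Hardy-space theory, whereas the paper's framework is set up deliberately because the same isometry and the same formulas \eqref{winston}--\eqref{bix} are reused heavily in Lemmas \ref{LC}, \ref{LD}, and \ref{Lres} and in the variance computations of \refS{SpfT1-A}; so for the paper's purposes the extra machinery pays for itself. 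One small remark: you should observe (as the paper does implicitly, and as you note in passing) that when $1-\chix(\vec w)=0$ the vector $\vec w$ is a genuine nonzero eigenvector, since $(\gl-T)\vec w=(1-\chix(\vec w))\vec v$ and $\vec w\ne0$; this is what upgrades ``not invertible'' to ``eigenvalue'', which will matter in the follow-up Lemma \ref{LD}.
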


\begin{proof}
By \eqref{v} and \eqref{llr}, and because $R<m$,
\begin{equation}
  \norm{\vec v}_{\llr}^2 = \sumki R^{2k}m^{-2k}<\infty.
\end{equation}
Next, 
it is clear from \eqref{llr} that the shift operator $S$ is bounded on
$\llr$ (with norm $R$).
Furthermore, by \eqref{mmu} and assumption,
\begin{equation}
  \sumki R^{-2k}\mu_k^2 \le \mmu(R\qw)^2<\infty
\end{equation}
and it follows by the Cauchy--Schwarz inequality that
$\chix_1\bigpar{(a_k)\xoo}:=\sumki\mu_ka_k$ defines a bounded linear
functional $\chix_1$ on $\llr$. Since $\chix$ can be written $\chix=\chix_1-\chix_1
S$, $\chix$ too is bounded.
It now follows from \eqref{T} that $T$ is a bounded linear operator on $\llr$.
  
For the final statement we note that the mapping $(a_k)\xoo\mapsto\sumk a_k z^k$
is an isometry of $\llr$ onto the Hardy space $H^2_R$ consisting of all
analytic functions $f(z)$ in the disc \set{z:|z|<R} such that
\begin{equation}\label{hhr}
  \norm{f}_{\hhr}^2
:=\sup_{r<R} \frac{1}{2\pi}\int_0^{2\pi}|f\bigpar{re^{i\theta}}|^2\,d\theta
<\infty.
\end{equation}
(See \eg{} \cite{Duren}.) 
In particular, $\vec v$ corresponds to the function 
\begin{equation}\label{vz}
  v(z):=\sumki m^{-k}z^k = \frac{z/m}{1-z/m}=\frac{z}{m-z}.
\end{equation}
We use the same notations $\chix$, $S$ and $T$ for the corresponding linear
functional and operators on $\hhr$, and note that
the shift operator $S$ on $\llr$ corresponds to the multiplication operator 
$S f(z)= zf(z)$ on $\hhr$. The definition \eqref{T} thus translates to
\begin{equation}
  \label{TH}
Tf(z)=zf(z)+\chix(f)v(z). 
\end{equation}
Consequently, for any $h\in\hhr$, the equation
$(\gl-T)f=h$ is equivalent to
\begin{equation}\label{winston}
  (\gl-z)f(z)-\chix(f)v(z)=h(z).
\end{equation}
Any solution to \eqref{winston} has to be of the form
\begin{equation}\label{magnus}
  f(z)=c\frac{v(z)}{\gl-z}+\frac{h(z)}{\gl-z},
\end{equation}
where
\begin{equation}\label{anna}
  c=\chix(f)
=c\chix\Bigpar{\frac{v(z)}{\gl-z}}+\chix\Bigpar{\frac{h(z)}{\gl-z}}.
\end{equation}
Suppose $|\gl|>R$; then $1/(\gl-z)$ is a bounded analytic function on the
domain \set{|z|<R}, so it follows from \eqref{hhr} and $v,h\in\hhr$ that
$v(z)/(\gl-z)\in\hhr$ and $h(z)/(\gl-z)\in\hhr$.
If $\chix\bigpar{v(z)/(\gl-z)}\neq1$, then \eqref{anna} has a unique solution
$c$ for any $h\in\hhr$, and thus \eqref{winston} has a unique solution
$f\in\hhr$, given by \eqref{magnus}. 
In other words, then $\gl-T$ is invertible on $\hhr$ and $\gl\notin\gs(T)$.
(Continuity of $(\gl-T)\qw$ is automatic, 
by the closed graph theorem.)
Conversely, if
$\chix\bigpar{v(z)/(\gl-z)}=1$, then \eqref{winston} has either no solution
or
infinitely many solutions $f$ for any given $h\in\hhr$, and thus $\gl\in\gs(T)$.

We have shown that for $|\gl|>R$,
\begin{equation}\label{spectrum}
  \gl\in\gs(T) \iff \chix\Bigparfrac{v(z)}{\gl-z}=1
. 
\end{equation}

We analyse the condition in \eqref{spectrum} further.
If $|\gl|>R$ and $\gl\neq m$, then, by \eqref{vz},
\begin{equation}\label{vix}
  \begin{split}
  \frac{v(z)}{\gl-z}
&=
  \frac{z}{(\gl-z)(m-z)}
=\frac{1}{m-\gl}\Bigpar{\frac{\gl}{\gl-z}-\frac{m}{m-z}}.
  \end{split}
\end{equation}
Furthermore,
$\gl/(\gl-z)=\sumk \gl^{-k}z^k$ and thus by \eqref{chi} and \eqref{mmu},
\begin{equation}
\chix\Bigpar{\frac{\gl}{\gl-z}}
=\sumki \mu_k \gl^{-k}(1-\gl)
=(1-\gl)\mmu\bigpar{\gl\qw}.
\end{equation}
Hence, \eqref{vix} yields,
recalling $\mmu(m\qw)=1$ by \ref{Amalthus},
\begin{equation}\label{bix}
  \begin{split}
\chix\Bigpar{\frac{v(z)}{\gl-z}}	
&=\frac{1}{m-\gl}
\lrpar{\chix\Bigpar{\frac{\gl}{\gl-z}}-\chix\Bigpar{\frac{m}{m-z}}}
\\&
=\frac{1}{m-\gl}\bigpar{(1-\gl)\mmu(\gl\qw)-(1-m)\mmu(m\qw)}
\\&
=\frac{1}{m-\gl}\bigpar{(1-\gl)\mmu(\gl\qw)+m-1}.
  \end{split}
\end{equation}
Consequently, for $|\gl|>R$ with $\gl\neq m$,
by \eqref{spectrum} and \eqref{bix},
\begin{equation}
  \begin{split}
\gl\in\gs(T)
&\iff
\chix\Bigpar{\frac{v(z)}{\gl-z}}	=1
\\&
\iff	
(1-\gl)\mmu(\gl\qw)+m-1=m-\gl
\\&
\iff	
(1-\gl)\mmu(\gl\qw)=1-\gl
\\&
\iff
\mmu(\gl\qw)=1.
  \end{split}
\end{equation}
In the special case $\gl=m$, we find by continuity, letting $\gl\to m$ in
\eqref{bix}, 
\begin{equation}\label{mix}
  \begin{split}
\chix\Bigpar{\frac{v(z)}{m-z}}	
&=
\lim_{\gl\to m}\chix\Bigpar{\frac{v(z)}{\gl-z}}	
=-\frac{d}{d\gl}\bigpar{(1-\gl)\mmu(\gl\qw)}\big|_{\gl=m}
\\&
=\mmu(m\qw)-(m-1)m^{-2}\mmu'(m\qw)
<\mmu(m\qw)=1
  \end{split}
\end{equation}
since $\mmu'(x)>0$ for $x>0$. Hence $m\notin\gs(T)$.
\end{proof}

\begin{rem}\label{Rspectrum}
It is easily seen that $\gl\in\gs(T)$ for every $\gl$ with $|\gl|\le R$,
\eg{}  by taking $h=v$ in \eqref{winston}--\eqref{magnus} and noting that
  $v(z)/(\gl-z)\notin\hhr$. Thus we have a complete description of the
spectrum $\gs(T)$ on $\llr$.
\end{rem}

\begin{lem}\label{LC}
  Suppose that $1\le R<m$ and that $\mmu(R\qw)<\infty$.
Suppose furthermore that $\mmu(z)\neq 1$ for every complex $z\neq m\qw$ 
with $|z|<R\qw$. Then, for every $R_1>R$, there exists $C=C(R_1)$ such that 
\begin{equation}\label{lc}
  \norm{T^n}_{\llr} \le C R_1^n, 
\qquad n\ge0.
\end{equation}
\end{lem}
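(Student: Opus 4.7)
The plan is to invoke the spectral radius formula together with the spectral description from Lemma \ref{LB}. I will not need to do any new direct estimation on $T^n$; the work has already been done in computing $\gs(T)$.

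First, I would verify that under the hypothesis of \refL{LC}, the spectrum $\gs(T)$ on $\ell^2_R$ is contained in the closed disc $\set{|\gl|\le R}$. By \refL{LB}, any $\gl$ with $|\gl|>R$ belongs to $\gs(T)$ if and only if $\gl\qw\in\gGG$, i.e., $\mmu(\gl\qw)=1$ with $\gl\qw\neq m\qw$. But the hypothesis of \refL{LC} says exactly that no such point $\gl\qw$ exists in the disc $\set{|z|<R\qw}$, which is where $\gl\qw$ would lie. Hence $\gs(T)\cap\set{|\gl|>R}=\emptyset$, so the spectral radius satisfies $r(T)\le R$.

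Second, I would apply the Gelfand spectral radius formula \cite[Theorem VII.3.5]{Dunford-Schwartz} to conclude that
\begin{equation}
\lim_{n\to\infty}\norm{T^n}_{\llr}^{1/n}=r(T)\le R<R_1 .
\end{equation}
Since $R_1>r(T)$, there is an $n_0$ such that $\norm{T^n}_{\llr}^{1/n}\le R_1$ for all $n\ge n_0$, and then absorbing the finitely many indices $n<n_0$ into a constant yields \eqref{lc}.

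I do not expect a real obstacle here: the whole content is already in the spectral computation of \refL{LB}. The only thing to double-check is that \refL{LB} is applicable, i.e., that the hypotheses $1\le R<m$ and $\mmu(R\qw)<\infty$ (shared by \refL{LC}) are used to guarantee that $\vec v\in\ell^2_R$ and $T$ is bounded, so that the Gelfand formula makes sense. This is routine.
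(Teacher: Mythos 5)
Your proof is correct and follows exactly the same route as the paper: use \refL{LB} to conclude $\gs(T)\subseteq\set{|\gl|\le R}$ under the hypothesis of \refL{LC}, then apply the Gelfand spectral radius formula to get $\limn\norm{T^n}^{1/n}=\rhox(T)\le R<R_1$ and absorb finitely many initial $n$ into the constant. The only difference is the precise reference within \cite{Dunford-Schwartz} (the paper cites Lemma VII.3.4), which is immaterial.
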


\begin{proof}
  By \refL{LB}, $T$ is a bounded linear operator on $\ell^2_R$ and 
if $\gl\in\gs(T)$ with $|\gl|>R$, then $\mmu(\gl\qw)=1$ and $\gl\qw\neq
m\qw$. By assumption, there is no such $\gl$, and thus
$\gs(T)\subseteq\set{\gl:|\gl|\le R}$. 
(Actually, equality holds by \refR{Rspectrum}.)
In other words, the
spectral radius
\begin{equation}
  \label{spectrhox}
\rhox(T):=\sup_{\gl\in\gs(T)}|\gl|\le R.
\end{equation}
By the spectral radius formula 
\cite[Lemma VII.3.4]{Dunford-Schwartz},
$\rhox(T)=\lim_\ntoo\norm{T^n}^{1/n}$
and thus \eqref{spectrhox} implies that, for any $R_1>R$,
$\norm{T^n}^{1/n}<R_1$ for large $n$, which 
yields \eqref{lc}.
\end{proof}

We shall use \refL{LC} when $\ggx>m\qqw$. In the case $\ggx\le m\qqw$, we
use instead the following lemma, based on a more careful spectral analysis
of $T$. Recall the definitions \eqref{gGG}--\eqref{gGGG}.

\begin{lem}\label{LD}
Assume that $R=r\qw\ge1$, where $\mmu(r)<\infty$.
Suppose furthermore that
$\gGGG=\set{\gam_1,\dots,\gam_q}\neq\emptyset$, and that \eqref{AT3} holds.
Let $\gl_i:=\gam_i\qw$.
Then there exist eigenvectors $\vecv_i$ with $T\vecv_i=\gl_i\vecv_i$
and linear projections $P_i$ with range $\cR(P_i)=\set{c\vecv_i:c\in\bbC}$ 
(i.e., the span of $\vecv_i$), 
$i=1,\dots q$, 
and furthermore a bounded operator $T_0$ in $\llr$ and a constant $\tR<\ggx\qw$
such that, for any $n\ge0$, 
\begin{equation}\label{toa}
  T^n = T_0^n + \sumiq \gl_i^n P_i
\end{equation}
and
\begin{equation}\label{ton}
  \bignorm{T_0^n}_{\llr}\le C \tR^n.
\end{equation}
Explicitly,
\begin{equation}\label{vecvi}
  \vecv_i
=P_i(\vecv)  
=\frac{1}{\gam_i(\gam_i-1)\mmu'(\gam_i)}\bigpar{\gam_i^{k}-m^{-k}}_k.
\end{equation}
\end{lem}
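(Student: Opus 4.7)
The plan is to exploit the Hardy-space realisation of $T$ from \refL{LB} and apply standard Riesz spectral projections at each of the isolated eigenvalues $\gl_i=\gam_i\qw$. By \refL{LB}, the part of $\gs(T)$ outside the closed disc $\set{|\gl|\le R}$ is precisely $\set{\gl\qw\in\gGG}$. Since $\gGG$ is discrete in $D_r$ and $\ggx<r$, I can pick $\eps>0$ so small that no element of $\gGG$ lies in the annulus $\ggx<|z|\le\ggx+\eps$. Consequently $\gl_1,\dots,\gl_q$ are isolated spectral points of common modulus $\ggx\qw$, and every other point of $\gs(T)$ has modulus at most $\max\bigpar{R,(\ggx+\eps)\qw}$, which is strictly less than $\ggx\qw$.

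Next I would verify that each $\gl_i$ is a simple pole of the resolvent. From the proof of \refL{LB}, for $|\gl|>R$ and $h\in\hhr$,
\begin{equation*}
(\gl-T)\qw h(z) = \frac{h(z)}{\gl-z} + \frac{\chix\bigpar{h/(\gl-z)}}{1-\chix\bigpar{v/(\gl-z)}}\cdot \frac{v(z)}{\gl-z},
\end{equation*}
and by \eqref{bix} the denominator equals $(1-\gl)\bigpar{1-\mmu(\gl\qw)}/(m-\gl)$. The hypothesis \eqref{AT3} yields the local expansion $1-\mmu(\gl\qw)=\gam_i^2\mmu'(\gam_i)(\gl-\gl_i)+O((\gl-\gl_i)^2)$, so the denominator has a simple zero at $\gl_i$, while the numerator is nonzero there ($\chix(v/(\gl_i-z))=1$ by \eqref{bix}). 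Hence each $\gl_i$ is a simple pole of $(\gl-T)\qw$, and the Riesz projections $P_i:=(2\pi\ii)\qw\oint_{|\gl-\gl_i|=\gd}(\gl-T)\qw\dd\gl$ (for small $\gd>0$) are rank-one bounded projections onto one-dimensional eigenspaces $\set{c\vecv_i:c\in\bbC}$ with $T\vecv_i=\gl_i\vecv_i$.

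To obtain \eqref{vecvi} I would compute $P_i(\vecv)$ explicitly as the residue of $(\gl-T)\qw\vecv$ at $\gl=\gl_i$. A partial-fraction expansion gives $v(z)/(\gl_i-z)=(\gl_i-m)\qw\sum_k(m^{-k}-\gam_i^k)z^k$ (valid since $R<|\gl_i|$), and combining the simple-pole calculation of the previous paragraph with the identity $(1-\gl_i)\gam_i^2=\gam_i(\gam_i-1)$ yields
\begin{equation*}
P_i(\vecv) = \frac{m-\gl_i}{(1-\gl_i)\gam_i^2\mmu'(\gam_i)(\gl_i-m)}\bigpar{m^{-k}-\gam_i^k}_k
= \frac{1}{\gam_i(\gam_i-1)\mmu'(\gam_i)}\bigpar{\gam_i^k-m^{-k}}_k,
\end{equation*}
as claimed in \eqref{vecvi}.

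Finally, set $T_0:=T-\sumiq\gl_iP_i$. Since the Riesz projections commute with $T$ and satisfy $P_iP_j=\gd_{ij}P_i$ together with $TP_i=\gl_iP_i$, we have $T_0P_i=P_iT_0=0$, so binomial expansion gives \eqref{toa} for every $n\ge0$. The restriction of $T_0$ to the invariant subspace $\ker(\sumiq P_i)$ has spectrum $\gs(T)\setminus\set{\gl_1,\dots,\gl_q}$, and thus spectral radius at most $\max(R,(\ggx+\eps)\qw)<\ggx\qw$; the spectral radius formula then delivers \eqref{ton} for any $\tR$ chosen strictly between this radius and $\ggx\qw$. The main obstacle is the explicit residue calculation identifying $P_i(\vecv)$ in \eqref{vecvi}; the rest is routine spectral theory once the simple-pole structure has been extracted from \eqref{AT3}.
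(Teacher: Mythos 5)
Your proposal is correct and follows essentially the same route as the paper: both exploit the explicit Hardy--space formula for the resolvent from the proof of \refL{LB}, use \eqref{AT3} via the derivative of $\chix\bigpar{v/(\gl-z)}$ (which you phrase as the local expansion $1-\mmu(\gl\qw)=\gam_i^2\mmu'(\gam_i)(\gl-\gl_i)+O((\gl-\gl_i)^2)$, and the paper records as \eqref{byx}), compute $P_i(\vecv)$ explicitly from the resolvent, and invoke the spectral radius formula on the complementary invariant subspace. The only cosmetic difference is that you establish rank one by exhibiting a simple pole of the resolvent whose residue is explicitly the rank-one map $h\mapsto c(h)\,v(z)/(\gl_i-z)$, whereas the paper argues $\cN(\gl_i-T)\cap\cR(\gl_i-T)=\set0$ and identifies $P_i$ as the projection onto the one-dimensional kernel along the range; these are two formulations of the same semisimplicity statement.
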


\begin{proof}
Since the points in $\gGG$ are isolated,
there is a number $\trx>\ggx$ such that
$|z|>\trx$ for any $z\in\gGG\setminus\gGGG$.
We may assume $\trx<r$. Let $\tR:=\trx\qw>R$.
By \refL{LB}, $\gl_i=\gam_i\qw\in\gs(T)$ with $|\gl_i|=\ggx\qw$, and
$|\gl|<\tR<\ggx\qw$ for any $\gl\in\gs(T)\setminus\set{\gl_1,\dots,\gl_q}$.

Since $\gl_1,\dots,\gl_q$ thus are isolated points in $\gs(T)$,
by standard functional calculus, see \eg{} 
\cite[Section VII.3]{Dunford-Schwartz}, 
there exist commuting projections (not necessarily orthogonal)
$P_0,\dots,P_q$ in $\llr$ such that $\sum_{i=0}^q P_i=1$, 
$T$ maps each subspace $E_i:=P_i(\llr)$ into itself, and if 
$\hT_i$ is 
the restriction of $T$ to $E_i$, 
then $\hT_i$ has spectrum $\gs(\hT_i)=\set{\gl_i}$ for $1\le i\le q$
  and $\gs(\hT_0)=\gs(T)\setminus\set{\gl_i}_1^q$.
In particular, the spectral radius
$\rhox(\hT_0)<\tR$, and thus, 
by the spectral radius formula
\cite[Lemma VII.3.4]{Dunford-Schwartz},
\begin{equation}\label{tono}
  \norm{\hT_0^n}\le C \tR^n,
\qquad n\ge0.
\end{equation}
Let $T_0:=TP_0$. Then $T_0^n=T^nP_0=\hT_0^nP_0$, and \eqref{ton} follows.

It remains to show
that the spaces $E_i=\cR(P_i)$ are one-dimensional, and spanned by the
vectors $\vecv_i$ in \eqref{vecvi}.

We use, as the proof of \refL{LB}, 
the isometry $(a_k)\xoo\mapsto\sumk a_k z^k$
of $\llr$ onto  $H^2_R$.
    
For each $\gl_i$, $\mmu(\gl_i\qw)=1$, and thus
$\chix\bigpar{v(z)/(\gl_i-z)}=1$ by \eqref{bix}, 
see also \eqref{spectrum}.
Hence, \eqref{winston}--\eqref{anna} show, by taking $h=0$, that
the kernel $\cN(\gl_i-T)$ is one-dimensional and spanned by $v(z)/(\gl_i-z)$.
Similarly, again by \eqref{winston}--\eqref{anna},
the range $\cR(\gl_i-T)$ 
is given by
\begin{equation}\label{RglT}
  \cR(\gl_i-T)=
\Bigset{h\in\llr: \chix\Bigpar{\frac{h(z)}{\gl_i-z}}=0}. 
\end{equation}
By differentiating \eqref{bix}, we find for $|\gl|>R$ with 
$\gl\qw\in\gGG$,
i.e., $\gl\neq m$ and $\mmu\bigpar{\gl\qw}=1$,
\begin{equation}\label{byx}
  \begin{split}
\chix\Bigpar{\frac{v(z)}{(\gl-z)^2}}	
&=-\ddl
\chix\Bigpar{\frac{v(z)}{\gl-z}}	
=\ddl\Bigpar{1-\chix\Bigpar{\frac{v(z)}{\gl-z}}}	
\\&
=\ddl\frac{(1-\gl)(1-\mmu(\gl\qw))}{m-\gl}
=\frac{(1-\gl)\mmu'(\gl\qw)}{(m-\gl)\gl^2}.
  \end{split}
\end{equation}
Thus, the assumption \eqref{AT3} implies that 
$\chix\bigpar{\xfrac{v(z)}{(\gl_i-z)^2}}	\neq0$, and thus 
$\xqfrac{v(z)}{\gl_i-z}\notin\cR(\gl_i-T)$ by \eqref{RglT}. 
Hence, $\cN(\gl_i-T)\cap\cR(\gl_i-T)=\set0$.
Consequently, for every $h\in\cR(\gl_i-T)$, \eqref{winston} has a unique
solution $f\in\cR(\gl_i-T)$, i.e., 
the restriction of $\gl_i-T$ to $\cR(\gl_i-T)$ is invertible.

It follows that the projection $P_i$ is the projection onto 
$\cN(\gl_i-T)=\set{cv(z)/(\gl_i-z)}$ that vanishes on $\cR(\gl_i-T)$, which 
by \eqref{RglT} is given by
\begin{equation}\label{pif}
  P_i(f(z))=\frac{\chix\bigpar{f(z)/(\gl_i-z)}}
{\chix\bigpar{\xfrac{v(z)}{(\gl_i-z)^2}}}
\cdot
\frac{v(z)}{\gl_i-z}.
\end{equation}
In particular, since $\chix\bigpar{v(z)/(\gl_i-z)}=1\neq0$,
$P_i(v)$ is a non-zero multiple of $v(z)/(\gl_i-z)$.
Let $\vecv_i:=P_i(\vec v)$. Thus $T \vecv_i=\gl_i\vecv_i$,
and, for $n\ge0$,
\begin{equation}
T^n =T^nP_0+\sumiq T^nP_i 
=T_0^n+\sumiq\gl_i^nP_i,  
\end{equation}
showing \eqref{toa}.

Finally, \eqref{pif} and \eqref{byx} yield
\begin{equation}
  v_i(z):=P_i\bigpar{v(z)}
=
\frac{(m-\gl_i)\gl_i^2}{(1-\gl_i)\mmu'(\gl_i\qw)}\cdot\frac{v(z)}{\gl_i-z},
\end{equation}
and \eqref{vecvi} follows because $\gl_i=\gam_i\qw$ and
by \eqref{vz}, for $|\gl|>R$,
\begin{equation}\label{mmix}
  (m-\gl)\frac{v(z)}{\gl-z}=\frac{\gl}{\gl-z}-\frac{m}{m-z}=
\sumk (\gl^{-k}-m^{-k})z^k  . 
\end{equation}
\end{proof}

\begin{rem}
  \label{RAT3}
It follows also that \eqref{AT3} implies that
the  points $\gl_i\in\gs(T)$ are simple poles of the resolvent
$(\gl-T)\qw$, and conversely. 
\refL{LD} can be extended without assuming \eqref{AT3}; the general result
is similar but more complicated, and is left to the reader.
Cf.\ \cite[Theorem VII.3.18]{Dunford-Schwartz}.
\end{rem}

We shall also use another similar calculation.
\begin{lem}  \label{Lres}
  Suppose that $1\le R<m$ and that $\mmu(R\qw)<\infty$.
If\/ $|\gl|>R$ and\/ $\mmu\bigpar{\gl\qw}\neq1$, then
\begin{equation}\label{resT}
(\gl-T)\qw(\vec v)
=\frac{1}{(1-\gl)(1-\mmu(\gl\qw))}\bigpar{\gl^{-\ellk}-m^{-\ellk}}_\ellk  .
\end{equation}
\end{lem}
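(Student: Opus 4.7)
The plan is to read off the answer directly from the machinery already developed in the proof of \refL{LB}. Setting $h = \vec v$ in the equation $(\gl - T)f = h$ which, in the Hardy space realization, is \eqref{winston}, any solution must take the form \eqref{magnus}, namely
\begin{equation*}
f(z) = c\,\frac{v(z)}{\gl - z} + \frac{v(z)}{\gl - z} = (c+1)\,\frac{v(z)}{\gl - z},
\end{equation*}
where the scalar $c$ is determined by the consistency condition \eqref{anna}.

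Writing $A := \chix\bigpar{v(z)/(\gl - z)}$ for brevity, the consistency condition with $h = v$ reduces to $c = cA + A$, so $c = A/(1 - A)$ and hence $c + 1 = 1/(1 - A)$. To see that the denominator does not vanish, I invoke the identity \eqref{bix}, which gives
\begin{equation*}
1 - A = \frac{(m - \gl) - (1 - \gl)\mmu(\gl\qw) - (m - 1)}{m - \gl} = \frac{(1 - \gl)\bigpar{1 - \mmu(\gl\qw)}}{m - \gl}.
\end{equation*}
Here $m - \gl \neq 0$ because $\mmu(m\qw) = 1 \neq \mmu(\gl\qw)$ by \ref{Amalthus}, and $1 - \gl \neq 0$ because $|\gl| > R \ge 1$, while $1 - \mmu(\gl\qw) \neq 0$ is exactly the hypothesis of the lemma. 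Thus $f$ is uniquely determined, and
\begin{equation*}
f(z) = \frac{m - \gl}{(1 - \gl)\bigpar{1 - \mmu(\gl\qw)}}\cdot\frac{v(z)}{\gl - z}.
\end{equation*}

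The last step is the purely algebraic identity \eqref{mmix}, which asserts that $(m - \gl)\,v(z)/(\gl - z) = \sumk\bigpar{\gl^{-k} - m^{-k}}z^k$. Substituting this in yields
\begin{equation*}
f(z) = \frac{1}{(1 - \gl)\bigpar{1 - \mmu(\gl\qw)}}\sumk\bigpar{\gl^{-k} - m^{-k}}z^k,
\end{equation*}
and translating back via the isometry $\hhr \cong \llr$ gives \eqref{resT}. There is no real obstacle: the computation is a direct consequence of the formulas assembled in \refL{LB} combined with the partial-fraction identity \eqref{mmix}.
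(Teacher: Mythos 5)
Your proof is correct and follows essentially the same route as the paper's: take $h=v$ in \eqref{winston}--\eqref{anna}, solve the scalar consistency equation using \eqref{bix}, and finish with the partial-fraction identity \eqref{mmix}. The only cosmetic difference is that you solve for $c$ and then form $c+1$, whereas the paper works directly with $b=c+1$; you also spell out explicitly why $1-A\neq0$ (via $\gl\neq m$, $\gl\neq1$, and the hypothesis $\mmu(\gl\qw)\neq1$), a point the paper leaves implicit.
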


\begin{proof}
  Taking $h=v$ in \eqref{winston}--\eqref{anna}, 
we find
\begin{equation}
(\gl-T)\qw v(z)=  f(z)=b\frac{v(z)}{\gl-z}
\end{equation}
for a constant $b$ such that $b=\chix(f)+1$.
This yields by \eqref{bix}
\begin{equation}
b-1=\chix(f)=\frac{b}{m-\gl}\bigpar{(1-\gl)\mmu(\gl\qw)+m-1}
\end{equation}
with the solution
\begin{equation}
  b=\frac{m-\gl}{(1-\gl)(1-\mmu(\gl\qw))}.
\end{equation}
Hence, using \eqref{mmix}, for $|z|<R$,
\begin{equation}\label{fix}
  \begin{split}
  f(z)
&=b\frac{v(z)}{\gl-z}
=\frac{1}{(1-\gl)(1-\mmu(\gl\qw))}\sumk (\gl^{-\ellk}-m^{-\ellk})z^\ellk  . 
  \end{split}
\end{equation}
\end{proof}

\section{A first normal convergence result}\label{Snormal}

Let
$\vec\eta:=(\eta_0,\eta_1,\eta_2,\dots)$, where $(\eta_k)\xoo$ are jointly
normal 
random variables with means $\E\eta_k=0$ and covariances
\begin{equation}
  \label{coveta}
\Cov(\eta_j,\eta_k)=\gs_{jk}=\Cov(N_j,N_k),
\end{equation}
see \eqref{gsjk}.
Note that $\eta_0=0$ since $N_0=0$.

\begin{lem}\label{Leta}
Assume \refAA, and
  let $\veceta\kk=(\eta\kk_j)_{j=0}^\infty$, 
$k=1,2,\dots$, be independent copies of the random
  vector $\eta$. Then, as \ntoo,
  \begin{equation}\label{leta}
	Z_n\qqw W_{n-k,j}\dto (1-1/m)\qq m^{-k/2}\eta_j\kk,
  \end{equation}
jointly for all $(j,k)$ with $j\ge0$ and $k\ge0$.
\end{lem}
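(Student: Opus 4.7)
\medskip

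\noindent\textbf{Proof plan.}
The plan is to exploit the fact that, for fixed $k$, the random variable $W_{n-k,j}$ is by construction a conditionally centred sum of $B_{n-k}$ i.i.d.\ copies of $N_j-\mu_j$, and that for different values of $k$ these sums involve disjoint sets of individuals (those born at different times $n-k$). Concretely, enumerate the individuals born at time $n-k$ and write $B_{n-k,j}=\sum_{i=1}^{B_{n-k}} N_j\kki$, where for each $i$ the vector $(N_j\kki)_j$ is an independent copy of $(N_j)_j$, independent of $\FF_{n-k-1}$. Thus, conditionally on $\FF_{n-k-1}$,
\begin{equation*}
\vec W\kk_n := (W_{n-k,j})_j = \sum_{i=1}^{B_{n-k}} \bigpar{N\kki_j-\mu_j}_j ,
\end{equation*}
is a sum of $B_{n-k}$ i.i.d.\ centred random vectors with covariance matrix $(\gs_{jj'})$.

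Next, for each fixed $K$, condition on $\FF_{n-K-1}$ and note that the vectors $\vec W\kk_n$ for $k=0,1,\dots,K$ involve disjoint families of individuals, and therefore are conditionally independent given $\FF_{n-K-1}$ (which in particular fixes all the sizes $B_{n-k}$). From \eqref{Zoo} and \eqref{bn} we have the \as{} convergence
\begin{equation*}
B_{n-k}/Z_n \asto (1-1/m)m^{-k}>0,
\end{equation*}
so in particular $B_{n-k}\asto\infty$. For any finite set of indices $(j,k)$, the multivariate central limit theorem (applied conditionally, then integrated against the bounded convergence theorem applied to the conditional characteristic function) yields
\begin{equation*}
B_{n-k}\qqw \vec W\kk_n \dto \veceta\kk
\end{equation*}
jointly in $k$, with the $\veceta\kk$ independent across $k$ (independence arises from the conditional independence together with the fact that the deterministic limiting covariance within each block depends only on $(\gs_{jj'})$, not on $\FF_{n-K-1}$).

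Finally, combine via Slutsky: write
\begin{equation*}
Z_n\qqw W_{n-k,j}
= \bigpar{B_{n-k}/Z_n}\qq \cdot B_{n-k}\qqw W_{n-k,j},
\end{equation*}
where the first factor converges \as{} to $\bigpar{(1-1/m)m^{-k}}\qq$ and the second converges in distribution (jointly, for any finite collection of $(j,k)$) to $\eta_j\kk$ by the previous step. This gives \eqref{leta} for any finite set of indices, hence jointly for all $(j,k)$ in the sense of convergence in $\bbR^\infty$.

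The main technical point is the passage from conditional to unconditional convergence in the CLT step, since $B_{n-k}$ is itself random and diverges only almost surely; this is handled by noting that the conditional characteristic function of $B_{n-k}\qqw \vec W\kk_n$ given $\FF_{n-k-1}$ is a bounded, continuous function of $B_{n-k}$ that converges pointwise to the Gaussian characteristic function on the event $\{B_{n-k}\to\infty\}$, which has probability one.
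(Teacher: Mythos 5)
There is a genuine gap in your argument, and it is precisely the obstacle that the paper's proof is designed to get around. You claim that, conditioned on $\FF_{n-K-1}$, the vectors $\vec W\kk_n=(W_{n-k,j})_j$ for $k=0,\dots,K$ are conditionally independent because they ``involve disjoint families of individuals,'' and you add parenthetically that $\FF_{n-K-1}$ ``fixes all the sizes $B_{n-k}$.'' Neither statement is correct. The $\sigma$-field $\FF_{n-K-1}$ is generated by the life histories of individuals born up to time $n-K-1$, so it fixes $B_{n-K}$ (since $B_n$ is $\FF_{n-1}$-measurable), but it does \emph{not} fix $B_{n-k}$ for $k<K$. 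Worse, the number of summands $B_{n-k}$ in $W_{n-k,j}$ is itself a function of the offspring numbers of the earlier-born individuals, i.e.\ of the very quantities appearing in $W_{n-k',j'}$ for $k'>k$. Concretely, $W_{n-K,1}=B_{n-K,1}-\mu_1 B_{n-K}$ determines $B_{n-K,1}$, which feeds into $B_{n-K+1}$, which is the (random) number of summands in $W_{n-K+1,j}$. So even conditionally on $\FF_{n-K-1}$, the blocks are dependent through their sizes. This is exactly the difficulty the paper flags after proving the fixed-$k$ case.

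The paper's actual proof keeps your fixed-$k$ step but replaces the (false) conditional-independence claim by a coupling argument. For each $k$ it introduces an independent auxiliary sequence $(\vecN\kki)_{i\ge1}$, replaces the random summand count $B_{n-k}$ by the \emph{deterministic-given-$B_{n-K}$} count $\bar B_{n-k}:=\lfloor m^{K-k}B_{n-K}\rfloor$, and shows that the resulting $\bar W_{n-k,j}=\sum_{i\le\bar B_{n-k}}N_j\kki$ differs from $W_{n-k,j}$ by $o_{\mathrm p}(B_{n-k}\qq)$, using $\bar B_{n-k}/B_{n-k}\asto 1$ and a second-moment bound on the mismatch. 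After this substitution the blocks really are conditionally independent given $B_{n-K}$, the multivariate CLT applies to each block with independent Gaussian limits across $k$, and Slutsky finishes. Your final Slutsky step and the $\bbR^\infty$ remark are fine, but the argument does not go through without some device of this kind to decouple the block sizes; as written, the CLT-with-independent-limits step is unjustified.
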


\begin{proof}
  Consider first a fixed $k\ge0$.
Given $B_{n-k}$, the vector $\vec B_{n-k}:=(B_{n-k,j})\joo$ is the sum of
$B_{n-k}$ independent copies of the random vector $\vecN$, and by
\eqref{wnk},
the vector $\vec W_{n-k}:=(W_{n-k,j})\joo$ is the sum of
$B_{n-k}$ independent copies of the centered random vector $\vecN-\E\vec N$.
By \eqref{bn} and \eqref{znkz},
\begin{equation}\label{pyret}
  \frac{B_n}{Z_n}=1-\frac{Z_{n-1}}{Z_n}\asto 1-m\qw>0.
\end{equation}
In particular, $B_n\to\infty$ \as, and thus
$B_{n-k}\to\infty$. Consequently, by the central limit theorem for
\iid{} finite-dimensional vector-valued random variables, and the definition of 
$\eta_j$,
\begin{equation}\label{erika}
  B_{n-k}\qqw W_{n-k,j}\dto \eta_j \eqd \eta_j\kk,
\end{equation}
jointly for any finite set of $j\ge0$.

Moreover, by \eqref{pyret} and \eqref{znkz}, 
\begin{equation}
  \label{per}
B_{n-k}/Z_n\asto (1-1/m)m^{-k},
\end{equation}
and thus \eqref{leta} for a fixed $k$ 
follows from \eqref{pyret} and \eqref{erika}.

To extend this to several $k$, the problem is that $W_{n-k,j}$ for different
$k$ are, in general, dependent. (For example, conditioned on $Z_{n-1}$
and $B_{n-1}$, $W_{n-1,1}$ determines $B_{n-1,1}$ which contributes to
$B_n$, and thus influences $W_{n,j}$.) We therefore approximate $W_{n-k,j}$
as follows.

We may assume that for each $k$, we have an infinite sequence 
$(\vecN\kki)_{i\ge1}$ of independent copies of $\vecN$, 
such that $\vec W_{n-k}$ is the sum $\sum_{i=1}^{B_{n-k}}\vecN\kki$ of the
first $B_{n-k}$ vectors; furthermore, these sequences for different $k$ are
independent. 

Fix $J,K\ge1$ and consider only $j\le J$ and $k\le K$. Let, for $0\le k\le K$, 
\begin{equation}
\bB_{n-k}:= \floor{m^{K-k}B_{n-K}}  
\end{equation}
and let 
\begin{equation}\label{bW}
   \bW_{n-k,j}:=\sum_{i=1}^{\bB_{n-k}}\vecN\kki_j.
\end{equation}
Then by the central limit theorem, exactly as for \eqref{erika},
\begin{equation}\label{berika}
  \bB_{n-k}\qqw \bW_{n-k,j}\dto  \eta_j\kk,
\end{equation}
jointly for all $j\le J$ and $k\le K$; note that now,
if we condition on $B_{n-K}$, the \lhs{s} for
different $k$ are independent.
Furthermore, by \eqref{pyret} and \eqref{znkz},
$\bB_{n-k}/B_{n-k}\asto1$ for every $k$. 
Hence \eqref{berika} yields, jointly,
\begin{equation}\label{cerika}
  B_{n-k}\qqw \bW_{n-k,j}\dto  \eta_j\kk.
\end{equation}
Moreover, using \eqref{bW}, 
\begin{equation}
  \E\bigpar{(\bW_{n-k,j}-W_{n-k,j})^2\mid B_{n-k},\bB_{n-k}}
=|B_{n-k}-\bB_{n-k}|\Var N_j
\end{equation}
and, consequently,
for every fixed $j\ge0$,  $k\ge0$ and
$\eps>0$,
\begin{equation*}
  \PP\bigpar{|\bW_{n-k,j}-W_{n-k,j}|>\eps B_{n-k}\qq\mid B_{n-k},\bB_{n-k}}
\le |1-\bB_{n-k}/B_{n-k}|\gs_{jj}\eps\qww
\asto0.
\end{equation*}
Taking the expectation, we obtain by dominated convergence that
for every $j$ and $k$, 
$  \PP\bigpar{|\bW_{n-k,j}-W_{n-k,j}|>\eps B_{n-k}\qq}\to0$ for every $\eps>0$, 
and thus
\begin{equation}\label{cd}
B_{n-k}\qqw\bW_{n-k,j}-B_{n-k}\qqw W_{n-k,j}\pto0.  
\end{equation}
Combining \eqref{cerika} and \eqref{cd} yields
\begin{equation}\label{derika}
  B_{n-k}\qqw W_{n-k,j}\dto  \eta_j\kk,
\end{equation}
still jointly for all $j\le J$ and $k\le K$. 
The result follows by this and \eqref{per}, since $J$ and $K$ are arbitrary.
\end{proof}

\section{First proof of \refT{T1}}\label{SpfT1-A}


In this section we assume \refAA{} and also \ref{Aroots},
i.e., $\ggx>m\qqw$.
In other words, see \eqref{ggx},
each $z\in\gGG$ satisfies $|z|>m\qqw$.
Hence, we may decrease $r$ 
so that the disc $D_r$ contains
no roots of $\mmu(z)=1$ except $m\qw$, and still $r>m\qqw$. 
Thus, with $R:=1/r$ and assuming \refAA, we see that
$\ggx>m\qqw$ is equivalent to: 
\begin{Bnoenumerate}
\item[\ArootsR] 
  There exists $R$ with $1\le R<m\qq$ such that $\mmu(R\qw)<\infty$ and,
furthermore,  $\mmu(z)\neq 1$ for every complex $z\neq m\qw$ 
with $|z|<R\qw$.
\end{Bnoenumerate}
We fix an $R$ such that \ArootsR{} holds, 
and \ref{Ar} holds with $r=1/R$.
Note that $R$ may be chosen arbitrarily close to $m\qq$.
Furthermore, we fix $R_1$ with $R<R_1<m\qq$.
Then \ArootsR{} and \refL{LC} show that \eqref{lc} holds, i.e.,
$  \norm{T^n}_{\llr} =O\bigpar{ R_1^n}$.

\begin{lem}\label{LX1}
Assume \refAAA. If\/ $R<m\qq$, then
\begin{equation}\label{lx1a}
\E \norm{\vecx_n}_{\llr}^2 \le C m^n
\end{equation}
and thus
\begin{equation}\label{lx1b}
  \E X_{n,k}^2 \le CR^{-2k} m^n
\end{equation}
  for all $n,k\ge0$.
\end{lem}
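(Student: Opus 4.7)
The plan is to combine the explicit formula from \refL{LY}, the variance bound from \refL{LA}, and the operator norm bound from \refL{LC}, via Minkowski's inequality in $L^2(\llr)$.

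First I would fix some $R_1$ with $R<R_1<m\qq$. Since \ref{Aroots} is assumed, \ArootsR{} holds (after possibly decreasing $r$), so \refL{LC} applies and gives $\norm{T^k}_{\llr}\le C R_1^k$ for all $k\ge0$; in particular $\norm{T^k(\vec v)}_{\llr}\le C R_1^k$, since $\vec v\in\llr$ (which is immediate from \eqref{v} using $R<m$). By \refL{LA}, $\norm{W_{n-k}}_2\le C m^{(n-k)/2}$ (interpreting $W_j=0$ for $j<0$, so the bound trivially holds for $k>n$ as well).

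Next, I apply Minkowski's inequality in $L^2(\llr)$ to the representation \eqref{ly}. Since $T^k(\vec v)$ is deterministic,
\begin{equation*}
\norm{W_{n-k}T^k(\vec v)}_{L^2(\llr)}=\norm{W_{n-k}}_2\,\norm{T^k(\vec v)}_{\llr}\le C m^{(n-k)/2} R_1^k.
\end{equation*}
Hence, using \refL{LY},
\begin{equation*}
\norm{\vecx_n}_{L^2(\llr)}\le \sumkn \norm{W_{n-k}T^k(\vec v)}_{L^2(\llr)}
\le C m^{n/2}\sumkn (R_1 m\qqw)^k.
\end{equation*}
Since we chose $R_1<m\qq$, the geometric series is bounded uniformly in $n$, which yields $\E\norm{\vecx_n}_{\llr}^2\le C m^n$ and proves \eqref{lx1a}.

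Finally, \eqref{lx1b} follows from \eqref{lx1a} by the definition \eqref{llr} of the norm on $\llr$: for each $k\ge0$, $R^{2k}X_{n,k}^2\le \norm{\vecx_n}_{\llr}^2$, and taking expectations gives $\E X_{n,k}^2\le R^{-2k}\E\norm{\vecx_n}_{\llr}^2\le CR^{-2k}m^n$. The only mild subtlety is the choice of $R_1$ strictly between $R$ and $m\qq$ to make the geometric sum converge; everything else is a direct assembly of the lemmas already proved.
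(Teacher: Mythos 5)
Your proposal is correct and matches the paper's own proof essentially line for line: both invoke \refL{LY} for the representation of $\vecx_n$, \refL{LA} for $\norm{W_{n-k}}_2\le Cm^{(n-k)/2}$, \refL{LC} (with $R_1$ fixed strictly between $R$ and $m\qq$) for $\norm{T^k(\vec v)}_{\llr}\le CR_1^k$, and then Minkowski's inequality in $L^2(\llr)$ to sum the resulting geometric series. The derivation of \eqref{lx1b} from \eqref{lx1a} via the definition \eqref{llr} of the $\llr$-norm is also exactly the paper's argument.
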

\begin{proof}
By \eqref{ly}, \refL{LA}, \eqref{lc} and 
 Minkowski's inequality,
\begin{equation}\label{ele57}
  \begin{split}
\norm{\vecx_n}_{L^2(\llr)}    
&\le \sumkn\norm{W_{n-k}}_{L^2} \norm{T^k(\vec  v)}_{\llr}
\le C\sumkn m^{(n-k)/2}R_1^k
\\&
= Cm^{n/2}\sumk (R_1/m\qq)^k
= Cm^{n/2}.
  \end{split}
\end{equation}
This yields \eqref{lx1a}, and \eqref{lx1b} follows by \eqref{llr}.
\end{proof}

Define for convenience $W_{n,j}$ also for $n<0$ by $W_{-1,1}:=W_0$ and
$W_{n,j}=0$ for $n\le-1$ and $j\ge1$ with $(n,j)\neq(-1,1)$.
Then \eqref{wn1} holds also for $n\le0$, provided the sum is extended to
$\infty$, and \eqref{ly} can be written
\begin{equation}\label{ele1}
  \vecx_n=-\sumk \sumji W_{n-k-j,j}T^k(\vec v).
\end{equation}
For each finite $M$ define also the truncated sum
\begin{equation}\label{ele2}
  \vecx\nM:=-\sumkM \sumjM W_{n-k-j,j}T^k(\vec v).
\end{equation}
\refL{Leta} implies that for any fixed $M$, as \ntoo,
\begin{equation}\label{ele3}
Z_n\qqw  \vecx\nM\dto
-\sumkM \sumjM (1-m\qw)\qq m^{-(k+j)/2}\eta_j\kkx{k+j}T^k(\vec v)
\end{equation}
in $\llr$.
Furthermore, 
by \eqref{ele1}--\eqref{ele2}, Minkowski's
inequality,
\refL{LA} and \eqref{lc},
regarding $\vecx_n$ and $\vecx\nM$ as elements of $L^2(\llr)$, the space of
$\llr$-valued random variables with square integrable norm,
\begin{align}\label{ele5}
\norm{\vecx_n-\vecx\nM}_{L^2(\llr)}    
&\le \sum_{k>M \text{ or } j>M}\norm{W_{n-k-j,j}}_{L^2} \norm{T^k(\vec  v)}_{\llr}
\notag\\&
\le C\sum_{k>M \text{ or } j>M}r^{-j}m^{(n-k-j)/2}R_1^k
\notag\\&
= Cm^{n/2}\sum_{k>M \text{ or } j>M}(R/m\qq)^{j}(R_1/m\qq)^k.
\end{align}
Since the sum on the \rhs{} of \eqref{ele5} converges, it tends to 0 as
$M\to\infty$, and thus
$m^{-n/2} \bigpar{\vecx_n-\vecx\nM}\to 0$ in $L^2(\llr)$, and thus in
probability, uniformly in $n$.
Since $Z_n/m^n\asto \cZ>0$, see \eqref{Zoo}, $\sup_n m^n/Z_n$ is an \as{}
finite random variable; hence also
\begin{equation}
Z_n\qqw \bigpar{\vecx_n-\vecx\nM}
=
\Bigparfrac{m^n}{Z_n}\qq
m^{-n/2} \bigpar{\vecx_n-\vecx\nM}\pto 0
\end{equation}
as $M\to\infty$, uniformly in $n$.

Moreover, the \rhs{} of \eqref{ele3} converges as \Mtoo{} in $L^2(\llr)$, and
thus in distribution, since by \eqref{evk2}
\begin{equation}
  \E[(\eta_j\kk)^2] = \Var N_j \le \CCevk r^{-2j} =\CCevk R^{2j},
\end{equation}
and thus, using also \eqref{lc},
\begin{align}\label{elex}
\sumk \sumji  m^{-(k+j)/2}\norm{\eta_j\kkx{k+j}T^k(\vec v)}_{L^2(\llr)}
&
=
\sumk \sumji  m^{-(k+j)/2}\norm{\eta_j\kkx{k+j}}_{L^2}\norm{T^k(\vec v)}_{\llr} 
\notag\\&
\le \CC
\sumk \sumji  m^{-(k+j)/2}R^j R_1^k
<\infty. 
\end{align}
It follows, see \cite[Theorem 4.2]{Billingsley},
that \eqref{ele3} extends to $M=\infty$, 
i.e.,
\begin{equation}\label{ele33}
Z_n\qqw  \vecx_n\dto
-\sumk \sumji (1-m\qw)\qq m^{-(k+j)/2}\eta_j\kkx{k+j}T^k(\vec v)
\end{equation}
in $\llr$ as \ntoo.
The \rhs{} is obviously a Gaussian random vector in $\llr$, which we write
as $\vec \zeta=(\zeta_0,\zeta_1,\dots)$. 
Then \eqref{ele33} yields \eqref{t1a}.

It remains to calculate the covariances of $\zeta_k$.
Let $\vec a=(a_0,a_1,\dots)$ be a (real) vector with only finitely many non-zero
elements. 
Then, 
by \eqref{ele33},
\begin{equation}\label{eleison}
  \suml a_\ell\zeta_\ell = \innprod{\vec a ,\vec \zeta}
=
-(1-m\qw)\qq
\sumk \sumji  m^{-(k+j)/2}\eta_j\kkx{k+j}\innprod{T^k(\vec v),\vec a}
\end{equation}
with the sum converging absolutely in $L^2$ by \eqref{elex}.

By the definition of $\eta_j\kk$ in \eqref{coveta} and \refL{Leta},
\begin{equation}
  \begin{split}
  \Cov\Bigpar{m^{-k/2}\eta_i\kk,m^{-\ell/2}\eta_j\kkx{\ell}}
&=
m^{-(k+\ell)/2}\gd_{k,\ell}\gs_{ij}
=\oint_{|w|=m\qqw}\gs_{ij}w^k\bar w^\ell \frac{|\dw|}{2\pi m\qqw}.    
  \end{split}
\end{equation}
Hence, \eqref{eleison} yields
\begin{equation}\label{ele18}
  \begin{split}
&(1-m\qw)\qw\Var\bigpar{\innprod{\vec a,\vec\zeta} }
\\&\qquad
=\sumk\suml\sumii\sumji
\innprod{T^k(\vec v),\vec a}
\innprod{T^\ell(\vec v),\vec a}
\oint_{|w|=m\qqw}\gs_{ij}w^{k+i}\bar w^{\ell+j} \frac{|\dw|}{2\pi m\qqw}
\\&\qquad
=
\oint_{|w|=m\qqw}
\sumii\sumji
\gs_{ij}w^{i}\bar w^{j} 
\lrabs{\sumk w^k \innprod{T^k(\vec v),\vec a}}^2
\frac{|\dw|}{2\pi m\qqw}.    
  \end{split}
\raisetag{1.5\baselineskip}
\end{equation}
Furthermore, if $|w|=m\qqw$, then
$\sumk\norm{w^kT^k(\vec v)}_{\llr}<\infty$ by \eqref{lc},
and thus
\begin{equation}\label{ele11}
  \sumk w^k T^k(\vec v)
= (1-wT)\qw(\vec v).
\end{equation}
Let $\gl:=w\qw$, so $|\gl|=m\qq>R$. We use as in the proof of \refL{LB} the
standard isometry $\llr\to\hhr$, and let $f(z)\in\hhr$ be the function
corresponding to  
$ (1-wT)\qw(\vec v)=\gl(\gl-T)\qw(\vec v)$.
Thus, see \eqref{TH}--\eqref{winston},
\begin{equation}
  (\gl-z)f(z)-\chix(f)v(z) =(\gl-T)f(z)=\gl v(z)
\end{equation}
and thus, \cf{} \eqref{winston}--\eqref{anna},
\begin{equation}
  f(z)=b\frac{v(z)}{\gl-z}
\end{equation}
for a constant $b$ such that $b=\chix(f)+\gl$.
This yields by \eqref{bix}
\begin{equation}
b-\gl=\chix(f)=\frac{b}{m-\gl}\bigpar{(1-\gl)\mmu(\gl\qw)+m-1}
\end{equation}
with the solution
\begin{equation}
  b=\frac{\gl(m-\gl)}{(1-\gl)(1-\mmu(\gl\qw))}.
\end{equation}
Hence, using \eqref{vix}, for $|z|\le R$,
\begin{equation}
  \begin{split}
  f(z)
&=b\frac{v(z)}{\gl-z}
=\frac{\gl}{(1-\gl)(1-\mmu(\gl\qw))}\Bigpar{\frac{\gl}{\gl-z}-\frac{m}{m-z}}
\\&
=\frac{\gl}{(1-\gl)(1-\mmu(\gl\qw))}\suml (\gl^{-\ell}-m^{-\ell})z^\ell  . 
\\&
=\frac{1}{(w-1)(1-\mmu(w))}\suml (w^{\ell}-m^{-\ell})z^\ell  . 
  \end{split}
\end{equation}
Thus,
$(1-wT)\qw(\vec v)=\bigpar{\xpar{(w-1)(1-\mmu(w))}\qw(w^\ell-m^{-\ell})}_\ell$
and, using \eqref{ele11},
\begin{equation}\label{qkq}
  \begin{split}
  \sumk w^k \innprod{T^k(\vec v),\vec a}
&=
\innprod{(1-wT)\qw(\vec v),\vec a}
=
\frac{1}{(w-1)(1-\mmu(w))}\suml a_\ell(w^\ell-m^{-\ell}).    
  \end{split}
\end{equation}
Hence \eqref{t1b} follows from \eqref{ele18}.

Finally, by \eqref{t1b}, the variable $\zeta_k$ is degenerate only if $\gS(z)=0$
for every $z$ with $|z|=m\qqw$, and thus, by \eqref{tc},
$\mXi(z)=\mmu(z)$ \as{} for every such $z$, which by \eqref{mmu}--\eqref{mXi}
implies $N_k=\mu_k$ \as{} for every $k$.
\qed

\section{A martingale}\label{Smart}

In the remaining sections, we  let $R:=r\qw<m\qq$,
where $r$ is as in \ref{Ar}.
(We may assume that $R$ is arbitrarily close to $m\qq$ by decreasing $r$.)
We consider as above the operator $T$ on $\llr$.


Fix a real vector $\vec a\in\ell^2_{R\qw}$ (for example any finite real
vector), and write  
\begin{equation}\label{alphak}
\alpha_k=\alpha_k(\veca):=
  \innprod{T^k(\vec v),\vec  a}.
\end{equation}
Then \eqref{ly} and \eqref{wn1} yield
\begin{equation}\label{ele19}
 \innprod{\vecx_n,\vec a}
=-\sumk \sumji W_{n-k-j,j}\alpha_k 
=-\sum_{\ell=0}^{n} \sum_{j=1}^{n-\ell} W_{\ell,j}
\alpha_{n-j-\ell}
\end{equation}

Define
\begin{align}
  \gDM_{n,\ell}&:= \sum_{j=1}^{n-\ell} \alpha_{n-j-\ell}W_{\ell,j}, \label{gDM}
\\
  M_{n,k}&:=\sum_{\ell=0}^k\gDM_{n,\ell}.  \label{mnk}
\end{align}
Then \eqref{mgw} shows that 
$\E\bigpar{\gDM_{n,\ell}\mid \FF_{\ell-1}}=0$, and thus $(M_{n,k})_{k=0}^n$ is a
martingale \wrt{} $(\FF_k)_k$.
Furthermore, by \eqref{ele19},
\begin{equation}\label{vende}
   \innprod{\vecx_n,\vec a}=-M_{n,n}.
\end{equation}
Conditioned on $\FF_{\ell-1}$, the vector $(W_{\ell,j})_j$ is the sum
of $B_\ell$ independent copies of $\vecN-\E\vecN$, 
where $\vecN=(N_j)\xoo$,
and thus, recalling \eqref{gsjk},
\begin{equation}\label{unl}
  \begin{split}
Q_{n,l}&:=   \E \bigpar{(\gD M_{n,\ell})^2\mid\FF_{\ell-1}}
=B_\ell \Var\biggpar{\sum_{j=1}^{n-\ell} \alpha_{n-\ell-j}N_j}
\\&\phantom:
=B_\ell \sum_{i,j=1}^{n-\ell} \gs_{ij}
\alpha_{n-\ell-i}\alpha_{n-\ell-j}.
  \end{split}
\end{equation}
The conditional quadratic variation of the martingale $(M_{n,k})_k$ is thus
\begin{equation}\label{Vn}
  \begin{split}
V_n:=\sum_{\ell=0}^n Q_{n,\ell} 
&= \sum_{\ell=0}^n B_\ell \sum_{i,j=1}^{n-\ell} \gs_{ij}\alpha_{n-\ell-i}\alpha_{n-\ell-j}
= \sum_{\ell=0}^n B_{n-\ell} 
\sum_{i,j=1}^{\ell} \gs_{ij}\alpha_{\ell-i}\alpha_{\ell-j}.
  \end{split}
\end{equation}

By \eqref{mXi}, $N_k\le r^{-k}\mXi(r)$, and thus 
by \eqref{gsjk} and the Cauchy--Schwarz inequality, 
\begin{equation}\label{gsb}
  |\gs_{ij}|\le  r^{-i-j} \E \mXi(r)^2=CR^{i+j}.
\end{equation}


\section{Second proof of \refT{T1}}\label{SpfT1-B}

As said earlier,
we give here another proof of \refT{T1}, 
based on a martingale central limit theorem. and the martingale in
\refS{Smart}.
The main reason is that the new proof with small
modifications also applies to \refT{T2}, see \refS{SpfT2}, and we prefer to
present it first for \refT{T1}. 
(The proof in \refS{SpfT1-A} does not seem to extend easily to \refT{T2}.)

Let $R$ and $R_1$ be as in \refS{SpfT1-A}.
Then, \eqref{alphak} and \eqref{lc} show that, for a fixed $\vec a$,
with $C=C(\vec a)$,
\begin{equation}\label{alphabit}
  |\alpha_k|\le C R_1^k. 
\end{equation}
Consequently, by \eqref{unl},  \eqref{gsb} and \eqref{alphabit}, 
since  $R/R_1<1$,
\begin{equation}\label{ub}
  \begin{split}
\frac{Q_{n,\ell}}{B_\ell}
= \sum_{i,j=1}^{n-\ell} \gs_{ij}\alpha_{n-\ell-i}\alpha_{n-\ell-j}
\le C \sum_{i,j=1}^{\infty} R^{i+j}R_1^{2(n-\ell)-i-j}
\le C R_1^{2(n-\ell)}.
  \end{split}
\end{equation}
Hence, by \eqref{Vn}, \eqref{unl}, \eqref{bn} and \eqref{znkz},
using dominated convergence justified by \eqref{ub} and $R_1^2/m<1$,
\begin{equation}\label{VZ}
  \begin{split}
\frac{  V_n}{Z_n }
&=
\sum_{\ell=0}^n \frac{B_{n-\ell}}{Z_n}\frac{Q_{n,n-\ell}}{B_{n-\ell}}
=
\sum_{\ell=0}^n \frac{Z_{n-\ell}-Z_{n-\ell-1}}{Z_n} 
\sum_{i,j=1}^{\ell} \gs_{ij}\alpha_{\ell-i}\alpha_{\ell-j}
\\&
\asto
\gss(\vec a):=
\sum_{\ell=0}^\infty \bigpar{m^{-\ell}-m^{-\ell-1}}
\sum_{i,j=1}^{\ell} \gs_{ij}\alpha_{\ell-i}\alpha_{\ell-j}
  \end{split}
\end{equation}

We cannot use a martingale central limit theorem directly for the martingale 
$(M_{n,k})_k$ defined in \eqref{mnk}, because the calculations above show
that most of the conditional quadratic variation $V_n$ comes from a few
terms (the last ones), \cf{} \refR{RLY}.
We thus introduce another martingale.

Number the individuals $1,2,\dots$ in order of birth, with arbitrary order
at ties, and let $\GG_\ell$ be the $\gs$-field generated by the life
histories of individuals $1,\dots,\ell$.
Each $Z_n$ is a stopping time with respect to $(\GG_\ell)_\ell$, and
$\GG_{Z_n}=\FF_n$.

We refine the martingale $(M_{n,k})_k$ by adding the contribution from each
individual separately. Let $\tau_i$ denote the birth time of $i$, and
$\Ni k$ the copy of $N_k$ for $i$ (i.e., the number of children $i$
gets at age $k$). Let
\begin{align}
  \gDMx_{n,i}&:=\sum_{j=1}^{n-\tau_i}\alpha_{n-\tau_i-j}\bigpar{\Ni{j}-\mu_j},
\label{gDMx}
\\
\Mx_{n,k}&:=\sum_{i=1}^k\gDMx_{n,i}.
\label{Mx}
\end{align}
Then $(\Mx_{n,k})_k$ is a $(\GG_k)_k$-martingale 
with $\Mx_{n,\infty}=\Mx_{n,Z_n}=M_{n,n}=-\innprod{\vecx_n,\vec a}$,
see \eqref{gDM}--\eqref{vende}, and 
the conditional quadratic variation
\begin{equation}\label{Vx}
\Vx_n:=\sum_i  \E\bigpar{(\gDMx_{n,i})^2\mid\GG_{i-1}} = V_{n}
\end{equation}
given by \eqref{Vn}.
Moreover, by \eqref{gDMx} and \eqref{alphabit},
\begin{equation}\label{holm}
  \begin{split}
\bigabs{\gDMx_{n,i}} 
&\le C \sumj R_1^{n-\tau_i-j}\bigpar{\Ni{j}+\mu_j}
= C R_1^{n-\tau_i}\bigpar{\mXi\xii(R_1\qw)+\mmu(R_1\qw)}.
  \end{split}
\end{equation}
Define the random variable $U:=\mXi(R_1\qw)+\mmu(R_1\qw)$. 
Then $\E U^2<\infty$ by \ref{Ar}, since $R_1\qw<r$.
It follows from \eqref{holm} that for some $c>0$ and every $\eps>0$,
defining $h(x):=\E\bigpar{U^2\ett{U> c x}}$,
\begin{multline}
    \E\bigpar{\bigabs{\gDMx_{n,i}}^2\ett{\bigabs{\gDMx_{n,i}}>\eps}\mid\GG_{i-1}} 
\le C R_1^{2(n-\tau_i)}\E\bigpar{U^2\ett{U> c \eps R_1^{\tau_i-n}}}
\\
= C R_1^{2(n-\tau_i)}h\bigpar{\eps R_1^{\tau_i-n}}
\le C R_1^{2(n-\tau_i)}h\bigpar{\eps R_1^{-n}},\quad
\end{multline}
Thus,
\begin{equation}
\label{svea}
\sum_i
\E\bigpar{\bigabs{\gDMx_{n,i}}^2\ett{\bigabs{\gDMx_{n,i}}>\eps}\mid\GG_{i-1}} 
\le 
C\sum_{k=0}^n B_k R_1^{2(n-k)}  h\bigpar{\eps R_1^{-n}}  . 
\end{equation}
Finally, we normalize $\Mx_{n,k}$ and define $\My_{n,k}:=m^{-n/2}\Mx_{n,k}$;
this yields a martingale $(\My_{n,k})_k$ with conditional quadratic
variation
\begin{equation}\label{Vy}
\Vy_n:=\sum_i  \E\bigpar{(\gDMy_{n,i})^2\mid\GG_{i-1}} = m^{-n}\Vx_{n}
\asto
\gss(\vec a)\cZ,
\end{equation}
by \eqref{Vx}, \eqref{VZ} and \eqref{Zoo}. 
Furthermore, by \eqref{svea}, 
\begin{equation}
\label{gota}
  \begin{split}
\sum_i
\E\bigpar{\bigabs{\gDMy_{n,i}}^2\ett{\bigabs{\gDMy_{n,i}}>\eps}\mid\GG_{i-1}} 
\le 
Ch\bigpar{\eps m^{n/2}R_1^{-n}}m^{-n}  \sum_{k=0}^n B_k R_1^{2(n-k)} , 
  \end{split}
\end{equation}
which tends to 0 \as{} as \ntoo, because $(m\qq R_1\qw)^{n}\to\infty$ 
and consequently $h\bigpar{\eps m^{n/2}R_1^{-n}}\to0$, and
\begin{equation}
m^{-n} \sum_{k=0}^n B_k R_1^{2(n-k)}
= m^{-n} \sum_{k=0}^n B_{n-k} R_1^{2k}
=  \sum_{k=0}^n \frac{B_{n-k}}{m^{n-k}} \Bigparfrac{R_1^2}{m}^{k}=\Oas(1),
\end{equation}
by \eqref{Zoo} and $R_1^2<m$.

The martingales $(\My_{n,i})_i$ thus satisfy a conditional Lindeberg
condition, which together with \eqref{Vy} implies,
by \cite[Corollary 3.2]{HH}, that, using \eqref{Vx},
\begin{equation}\label{martin}
  M_{n,n}/V_n\qq
=
  \Mx_{n,Z_n}/{\Vx_n}\qq
=
  \My_{n,Z_n}/\Vy_n\qq
\dto N(0,1)
\end{equation}
as \ntoo; furthermore, the limit is mixing.
(The fact that we here sum the martingale differences to a stopping time
$Z_n$ instead of a deterministic $k_n$ as in \cite{HH} makes no difference.) 
By \eqref{vende}
and \eqref{VZ}, this yields
\begin{equation}\label{martina}
\innprod{\vecx_n,\vec a}/Z_n\qq\dto N\bigpar{0,\gss(\vec a)}.  
\end{equation}

We can evaluate the asymptotic variance $\gss(\vec a)$ given in \eqref{VZ} by 
\begin{equation}\label{m2}
  \begin{split}
    \frac{\gss(\vec a)}{1-m\qw}
&=
\sum_{\ell=0}^\infty m^{-\ell} \sum_{i,j=1}^{\ell} \gs_{ij}\alpha_{\ell-i}\alpha_{\ell-j}
\\&
=
\sum_{k,p=0}^\infty  \sum_{i,j=1}^{\ell}
\gs_{ij}\alpha_{k}\alpha_{p}\ett{i+k=j+p}m^{-i-k}
\\&
=\sum_{k,p,i,j}\gs_{ij}\alpha_k\alpha_p
\oint_{|z|=m\qqw}z^{i+k}\bz^{j+p}
\frac{|\dz|}{2\pi m\qqw}
\\&
=
\oint_{|z|=m\qqw}\Bigabs{\sum_k \alpha_kz^{k}}^2\sum_{i,j}\gs_{ij}z^i\bz^j
\frac{|\dz|}{2\pi m\qqw}.
  \end{split}
\end{equation}
Furthermore,
for $|z|=m\qqw$ (and any $z$ with $|z|<R\qw=r$ and $\mmu(z)\neq1$),
by \eqref{alphak} and \refL{Lres} with $\gl=z\qw$,
\begin{equation}\label{m3}
  \begin{split}
\sum_{k=0}^\infty \alpha_kz^{k}
& = \Biginnprod{\sumk z^kT^k(\vecv),\vec a}    
=\biginnprod{(1-zT)\qw(\vecv),\vec a}
\\&
=\frac{1}{(z-1)(1-\mmu(z))}\sum_\ell a_\ell\bigpar{z^\ell-m^{-\ell}}.
  \end{split}
\end{equation}
By \eqref{m2}--\eqref{m3}, $\gss(\vec a)$ equals the \rhs{} in \eqref{t1b}.
Thus, \eqref{martina} 
shows convergence as in \eqref{t1a} for any finite linear combination of 
$Z_n\qqw X_{n,k}$, and thus joint convergence in \eqref{t1a} by the
Cram\'er--Wold device.

Convergence in $L^2(\llr)$ follows from this and \refL{LX1} (with a slightly
increased $R$) by a standard truncation argument; we omit the details.

By \eqref{t1b}, the variable $\zeta_k$ is degenerate only if $\gS(z)=0$
for every $z$ with $|z|=m\qqw$, and thus, by \eqref{tc},
$\mXi(z)=\mmu(z)$ \as{} for every such $z$, which by \eqref{mmu}--\eqref{mXi}
implies $N_k=\mu_k$ \as{} for every $k$.
\qed

\section{Proof of \refT{T2}}\label{SpfT2}
We assume in this section that $\ggx= m\qqw$ and that \eqref{AT3} holds.
By \refL{LB}, the spectral radius $\rhox(T)=\ggx\qw= m\qq$.
\refL{LD} applies with $\ggx=m\qqw$, and thus $\tR<m\qq$;
we may assume $\tR>R$.

Fix as in \refS{Smart} a real vector $\vec a\in \ell^2_{R\qw}$,
and define,
 using \eqref{vecvi},  
\begin{equation}\label{gak}
\gb_i=  \gb_i(\veca):=\innprod{P_i(\vecv),\veca}=\innprod{\vecv_i,\veca}
=\frac{1}{\gam_i(\gam_i-1)\mmu'(\gam_i)}\sumk a_k\bigpar{\gam_i^{k}-m^{-k}}.
\end{equation}
Then, 
by \eqref{alphak} and \refL{LD},
\begin{equation}\label{alphakk}
  \alpha_k=O\bigpar{\tR^k}+\sumiq\gl_i^k\innprod{P_i(\vecv),\veca}
=
\sumiq \gb_i\gl_i^k+ O\bigpar{\tR^k}
= O\bigpar{m^{k/2}}.
\end{equation}
Furthermore, the $O$'s in \eqref{alphakk} hold
uniformly in all $\veca$ with $\norm{\veca}_\llrqw\le1$,
as does every $O$ in this section.

Define also, for $p,t=1,\dots,q$,
\begin{equation}
  \gsx_{pt}:=\sum_{i,j=1}^{\infty} \gs_{ij}\gl_p^{-i}\gl_t^{-j},
\end{equation}
and note that, using \eqref{gsb}, $|\gl_p|=m\qq$ and $R<m\qq$,
\begin{equation}\label{zyx}
\sum_{i,j=1}^{\ell} \gs_{ij}\gl_p^{-i}\gl_t^{-j}
=     \gsx_{pt}
+O\Bigpar{\sum_{i>\ell, j\ge1} R^{i+j} (m\qq)^{-i-j}}
=\gsx_{pt}+O\bigpar{(R/m\qq)^\ell}.
\end{equation}

Let
\begin{equation}\label{sl}
  \begin{split}
  s_\ell:= \sum_{i,j=1}^{\ell} \gs_{ij}\alpha_{\ell-i}\alpha_{\ell-j}.    
  \end{split}
\end{equation}
Then, by \eqref{alphakk} and symmetry, using again \eqref{gsb} and
$|\gl_p|=m\qq$,
and \eqref{zyx},
\begin{equation}\label{slx}
  \begin{split}
  s_\ell&:= 
\sum_{i,j=1}^{\ell}  \gs_{ij}
\sumpq\sumtq\gb_p\gl_p^{\ell-i}\gb_t\gl_t^{\ell-j}
+
O\Bigpar{\sum_{i,j=1}^{\ell}  R^{i+j} m^{(\ell-i)/2} \tR^{\ell-j}}
\\&\phantom:
=\sumpq\sumtq\gb_p\gb_t \gl_p^{\ell}\gl_t^{\ell}
\sum_{i,j=1}^{\ell}  \gs_{ij}\gl_p^{-i}\gl_t^{-j}
+O\bigpar{(m\qq\tR)^{\ell}}
\\&\phantom:
=\sumpq\sumtq\gb_p\gb_t \gl_p^{\ell}\gl_t^{\ell}
\gsx_{pt}
+O\bigpar{(m\qq\tR)^{\ell}}.
  \end{split}
\end{equation}
In particular,
\begin{equation}
  \label{sln}
s_\ell=O\bigpar{m^\ell}.
\end{equation}
 It follows by \eqref{Vn}, \eqref{sl}, \eqref{Zoo}, \eqref{sln} and \eqref{slx} 
that, a.s., 
\begin{equation}\label{po}
  \begin{split}
\frac{V_n}{B_n}&
=\sumln\frac{B_{n-\ell}}{B_n}s_\ell    
=\sumln m^{-\ell}\bigpar{1+o(1)+\Oas(1)\ett{n-\ell<\log n}}s_\ell    
\\&
=\sumln m^{-\ell}s_\ell +o(n)
=\sumln m^{-\ell}\sumpq\sumtq\gb_p\gb_t \gl_p^{\ell}\gl_t^{\ell}\gsx_{pt}
 +o(n)
\\&
=
\sumpq\sumtq\gb_p\gb_t \gsx_{pt}
\sumln 
\Bigparfrac{\gl_p\gl_t}{m}^{\ell}
 +o(n).
  \end{split}
\end{equation}
Recall that $|\gl_p|=|\gl_t|=m\qq$, so $|\gl_p\gl_t/m|=1$. 
Hence, if $\gl_t=\bar\gl_p$, then 
$\sumln \bigpar{\xfrac{\gl_p\gl_t}{m}}^{\ell}=n+1$,
while if $\gl_t\neq\bar\gl_p$, then 
$\sumln \bigpar{\xfrac{\gl_p\gl_t}{m}}^{\ell}=O(1)$.
Consequently,
\eqref{po} yields, since $B_n/Z_n\asto1-m\qw$ by \eqref{bn} and \eqref{Zoo},
\begin{equation}\label{mote}
  \begin{split}
\frac{V_n}{nZ_n}
\asto
\gss(\veca)
&:= 
\ettmx
\sumpq\sumtq\gb_p\gb_t \gsx_{pt}\ett{\gl_t=\bar\gl_p}
\\&\phantom:
= \ettmx\sumpq|\gb_p|^2\sum_{i,j=1}^\infty\gs_{ij}\gl_p^{-i}\bar\gl_p^{-j}
\\&\phantom:
= \ettmx\sumpq|\gb_p|^2\gS(\gam_p).
  \end{split}
\end{equation}

We refine the martingale $(M_{n,k})_k$ to $(\Mx_{n,k})_k$ as in
\refS{SpfT1-B}, but this time we normalize it to 
$\My_{n,k}:=(nm^n)\qqw\Mx_{n,k}$.
It follows from \eqref{mote} and \eqref{Zoo}
that the conditional quadratic variation
$\Vy_n=V_n/(nm^n)\asto \gss(\veca)\cZ$, \ie, \eqref{Vy} holds also in the
present case. 
Furthermore, if we now let $R_1:=m\qq$, then \eqref{alphabit} and
\eqref{holm}--\eqref{svea}
hold, and it follows that \eqref{gota} is modified to
\begin{align}
\label{gotaII}
\sum_i
\E\bigpar{\bigabs{\gDMy_{n,i}}^2\ett{\bigabs{\gDMy_{n,i}}>\eps}\mid\GG_{i-1}} 
&\le 
\notag
Ch\bigpar{\eps n^{1/2}}\frac{1}{nm^{n}}  \sum_{k=0}^n B_k m^{n-k} 
\\&
=\Oas\bigpar{h\bigpar{\eps n^{1/2}}}\asto0.
\end{align}
Hence the conditional Lindeberg condition holds in the present case too,
and \eqref{martin} follows again by \cite[Corollary 3.2]{HH}, which now by 
\eqref{mote} and \eqref{vende} yields (mixing)
\begin{equation}\label{martinb}
\innprod{\vecx_n,\vec a}/(nZ_n)\qq\dto N\bigpar{0,\gss(\vec a)}.  
\end{equation}
By \eqref{mote} and \eqref{gak}, this proves \eqref{t2a}--\eqref{t2b}.

By \eqref{t2b}, the variable $\zeta_k$ is degenerate only if $\gS(\gam_p)=0$
for every $p$, and thus, by \eqref{tc}, $\mXi(\gam_p)=\mmu(\gam_p)$ a.s.

As in \refS{SpfT1-B}, convergence in $L^2(\llr)$ follows 
by a standard truncation argument, now using the following lemma
(with an increased $R$); we omit the details.
\qed

\begin{lem}\label{LX2}
Assume \refAA, $\ggx=m\qqw$ and \eqref{AT3}. 
If\/ $R<m\qq$, then
\begin{equation}\label{lx2a}
\E \norm{\vecx_n}_{\llr}^2 \le C nm^n
\end{equation}
and 
\begin{equation}\label{lx2b}
  \E X_{n,k}^2 \le Cn m^nR^{-2k}
\end{equation}
  for all $n,k\ge0$.
\end{lem}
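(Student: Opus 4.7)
The proof parallels that of \refL{LX1}, with \refL{LD} replacing \refL{LC}. Substituting the decomposition $T^k(\vecv) = T_0^k(\vecv) + \sumiq \gl_i^k \vecv_i$ into \refL{LY} gives
\begin{equation*}
\vecx_n = -\sumkn W_{n-k}\, T_0^k(\vecv) - \sumiq \vecv_i\, U_{i,n},
\qquad U_{i,n} := \sumkn \gl_i^k W_{n-k}.
\end{equation*}
Minkowski's inequality, \refL{LA} and the bound $\norm{T_0^k(\vecv)}_{\llr} \le C\tR^k$ with $\tR<m\qq$ from \refL{LD} give $L^2(\llr)$-norm $O(m^{n/2})$ for the first sum, exactly as in \refL{LX1}.

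The main obstacle is to show $\E|U_{i,n}|^2 \le C n m^n$: because $|\gl_i|=m\qq$, a naive Minkowski bound only yields $O(n^2 m^n)$, losing one factor of $n\qq$. I would recover this factor from the underlying martingale structure. Using the convention $W_\ell = \sum_{j\ge1} W_{\ell-j,j}$ for all $\ell\ge0$ introduced just before \eqref{ele1} and reindexing the double sum by $p := \ell - j$, with the single deterministic contribution $W_{-1,1}=W_0=1$ separated off, rewrites
\begin{equation*}
U_{i,n} = \gl_i^n + \sum_{p=0}^{n-1} \gl_i^{n-p}\, Y_{p,i},
\qquad Y_{p,i} := \sum_{j=1}^{n-p} \gl_i^{-j} W_{p,j}.
\end{equation*}
Since $W_{p,j}$ is $\FF_p$-measurable and $\E(W_{p,j}\mid\FF_{p-1})=0$ by \eqref{mgw}, the complex-valued random variables $(Y_{p,i})_p$ are martingale differences for $(\FF_p)_p$, hence mutually $L^2$-orthogonal and each of mean zero; combined with the $(Y_{p,i},\text{constant})$ cross-term vanishing, this gives
\begin{equation*}
\E|U_{i,n}|^2 = m^n + \sum_{p=0}^{n-1} m^{n-p}\,\E|Y_{p,i}|^2.
\end{equation*}
A standard conditional-variance computation (given $\FF_{p-1}$, the vector $(W_{p,j})_j$ is the sum of $B_p$ i.i.d.\ centred copies of $(N_j)_j$) yields
\begin{equation*}
\E\bigpar{|Y_{p,i}|^2 \mid \FF_{p-1}}
= B_p \sum_{j,j'\ge1} \gl_i^{-j}\bar\gl_i^{-j'}\gs_{jj'},
\end{equation*}
and the bound \eqref{gsb} combined with $R m\qqw < 1$ makes this double sum converge absolutely, so $\E|Y_{p,i}|^2 \le C\,\E B_p \le Cm^p$. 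Summing in $p$ gives $\E|U_{i,n}|^2 \le Cnm^n$.

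Combining the two contributions by Minkowski in $L^2(\llr)$ yields $\norm{\vecx_n}_{L^2(\llr)} \le Cn\qq m^{n/2}$, which is \eqref{lx2a}. The pointwise bound \eqref{lx2b} is then immediate from $R^{2k}\E X_{n,k}^2 \le \E\norm{\vecx_n}_{\llr}^2 \le Cnm^n$. The delicate step is the martingale-orthogonality argument: the sequence $(W_\ell)_\ell$ is itself $\FF_{\ell-1}$-measurable and is \emph{not} a sequence of martingale differences, so the key is to unfold each $W_\ell$ into its finer increments $W_{p,j}$ (which are martingale differences indexed by $p$) before attempting to sum, the oscillatory weights $\gl_i^{-j}$ remaining harmless because $|\gl_i|^{-1}=m\qqw < R\qw$ balances \eqref{gsb}.
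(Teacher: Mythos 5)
Your proof is correct, and it takes a genuinely different (though related) route from the paper's. The paper proves \eqref{lx2b} first and almost for free: it reuses the machinery of Sections~6 and~8, namely the identity $\innprod{\vecx_n,\vec a}=-M_{n,n}$ from \eqref{vende}, the martingale orthogonality $\E M_{n,n}^2=\E V_n=\E\sumln B_{n-\ell}s_\ell$, and the uniform bound $s_\ell=O(m^\ell)$ from \eqref{sln} (which already encodes $\alpha_k=O(m^{k/2})$ via \eqref{alphakk} and \refL{LD}); then $\E V_n\le C\sumln m^{n-\ell}m^\ell\le Cnm^n$, and \eqref{lx2a} follows by summing over the basis vectors. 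You instead work vectorially: you substitute the spectral split $T^k=T_0^k+\sumiq\gl_i^kP_i$ from \refL{LD} directly into \refL{LY}, treat the $T_0^k$ part exactly as in \refL{LX1}, and then establish $\E|U_{i,n}|^2\le Cnm^n$ for the coefficient $U_{i,n}=\sumkn\gl_i^kW_{n-k}$ by a fresh martingale-orthogonality argument on the finer increments $W_{p,j}$, deriving \eqref{lx2a} first and \eqref{lx2b} second. Both arguments rest on the same two pillars---\refL{LD} and martingale orthogonality of the $W_{p,j}$---but the paper applies orthogonality to the scalar martingale $(M_{n,k})_k$ while you apply it to the eigenvalue-direction coefficients $Y_{p,i}$. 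Your observation that $(W_\ell)_\ell$ alone is not a martingale-difference sequence, so that one must unfold to the $W_{p,j}$ level before orthogonalizing, is exactly the reason the paper works with $\gDM_{n,\ell}$ rather than $W_\ell$ in \eqref{gDM}; the paper handles this silently, your version makes it explicit. The only cosmetic blemish is that, as in the paper's own statement, the bound $Cnm^n$ is vacuous at $n=0$ (your constant term $|\gl_i^n|^2=m^n$ already shows the bound should read $C(n+1)m^n$); this is a shared harmless imprecision, not a gap.
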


\begin{proof}
By \eqref{vende}, \eqref{Vn}, \eqref{sl}, \eqref{EZ} and \eqref{sln},
\begin{equation}
  \E\innprod{\vecx_n,\veca}^2 = \E V_n = \E \sumln B_{n-\ell}s_\ell
\le C nm^n,
\end{equation}
uniformly for $\norm{\veca}_{\llrqw}\le1$.
Taking $\veca=R^k(\gd_{kj})_j$, we  obtain \eqref{lx2b}.

Finally, applying \eqref{lx2b} with $R$ replaced by some $R'$ with $R<R'<m\qq$,
\begin{equation}
\E \norm{\vecx_n}_{\llr}^2 =\sumk R^{2k} \E X_{n,k}^2\le C nm^n\sumk
(R/R')^{2k}
=Cnm^n.
\end{equation}
\end{proof}

\section{Proof of \refT{T3}}\label{SpfT3}
Assume now that $\ggx< m\qqw$.
By \refL{LB}, the spectral radius $\rhox(T)=\ggx\qw\ge m\qq$.
We apply \refL{LD}, assuming as we may that $\tR>m\qq$. (Otherwise
we increase $\tR$, keeping $\tR<\ggx\qw$.)
Hence, by \eqref{toa},
\begin{equation}\label{xa}
  T^k(\vecv) 
=
T_0^k(\vecv)+\sumiq \gl_i^k P_i(\vecv)
=
T_0^k(\vecv)+\sumiq \gl_i^k \vecv_i.
\end{equation}
Thus, by \eqref{ly},
\begin{equation}\label{xb}
  \vecx_n = -\sumkn W_k(TP_0)^{n-k}(\vecv) 
- \sumiq \sum_{k=0}^n \gl_i^{n-k}  W_k \vecv_i.
\end{equation}
Let, recalling \eqref{wn1},
\begin{equation}\label{ui}
  \qU_i:= -\sumk \gamma_i^{k}W_k
=-\sumk \gl_i^{-k}W_k
=-\suml\sumji \gl_i^{-\ell-j}W_{\ell,j},
\end{equation}
noting that by \refL{LA} and
$|\gamma_i|=\ggx<m\qqw$,
the sum converges in $L^2$  and
\begin{equation}\label{u2}
\Bignorm{\qU_i+\sumkn \gl_i^{-k}W_k}_2
\le\sum_{k=n+1}^\infty C  |\gl_i|^{-k}m^{k/2}
\le C\bigpar{\ggx m\qq}^n.
\end{equation}
Furthermore,
by \refL{LA} and \eqref{ton}, since $\tR>m\qq$, 
\begin{equation}\label{xc}
  \begin{split}
  \Bignorm{\sumkn W_k(TP_0)^{n-k}(\vecv)}_{L^2(\llr)}
&\le \sumkn \norm{W_k}_2\cdot   \norm{(TP_0)^{n-k}(\vecv)}_{\llr}
\\&
\le C \sumkn m^{k/2}\tR^{n-k}
\le C \tR^n.    
  \end{split}
\end{equation}

By \eqref{xb}, \eqref{xc}, \eqref{u2},
defining $U_i:=\bigpar{\gam_i(\gam_i-1)\mmu'(\gam_i)}\qw\qU_i$ so 
$\qU_i\vecv_i=U_i\vecu_i$
by \eqref{vecvi},
\begin{equation}\label{xx}
  \begin{split}
\Bignorm{\ggx^n \vecx_n - \sumiq \bigpar{\gl_i/|\gl_i|}^n U_i \vecu_i}_{L^2(\llr)}
&\le C \ggx^n \tR^n 
+ \sumiq\Bignorm{\sumkn\gl_i^{-k}W_k \vecv_i+\qU_i\vecv_i}_{L^2(\llr)}
\\&
\le  C (\ggx \tR)^n
+ C ( \ggx m\qq)^n
\le  C ( \ggx \tR)^n.
  \end{split}
\raisetag\baselineskip
\end{equation}
Since $\ggx \tR<1$, this shows convergence in \eqref{t3} in $L^2(\llr)$;
furthermore,
convergence \as{} follows by \eqref{xx} and
the Borel--Cantelli lemma.

We have $\E U_i=\E \qU_i=0$ by \eqref{ui} since $\E W_k=0$ by
\eqref{mgw}--\eqref{wn1}.
Furthermore,  $W_{0,k}=B_{0,k}-\mu_k=N_k-\mu_k$, while
$\E\bigpar{ W_{n,k}\mid\FF_0}=0$ for $n\ge1$ by \eqref{mgw}; 
hence by \eqref{wn1},
$\E\bigpar{W_{n}\mid\FF_0}=W_{0,n}=N_n-\mu_n$, and thus
\begin{equation}
\E\bigpar{ \qU_i\mid\FF_0}=-\sumk \gamma_i^k(N_k-\mu_k)
=-\mXi(\gamma_i)+\mmu(\gamma_i).
\end{equation}
Hence, $U_i$ is degenerate only if $\mXi(\gamma_i)$ is so.
\qed

\section{A stochastic integral calculus}\label{Sstoch}

The limit variables $\zeta_k$ in Theorems \ref{T1} and \ref{T2} can be
interpreted as stochastic integrals of certain functions (``\emph{symbols}'');
which gives a useful symbolic calculus. 
There are also some partial related results for \refT{T3}.

We consider the three cases in Theorems \ref{T1}--\ref{T3} separately.

\subsection{The case $\ggx>m\qqw$}
Assume throughout this subsection  
that \refT{T1}  applies; in particular that $\ggx> m\qqw$.

Let $\nu$ be the finite measure on the circle $|z|=m\qqw$ given by
\begin{equation}\label{nu}
\dd\nu(z):=
  \frac{m-1}m
|1-z|\qww\,|1-\mmu(z)|\qww	
\gS(z)\frac{|\dz|}{2\pi m\qqw},
\end{equation}
and consider an isomorphism $\SI: L^2(\nu)\to \cH$ of the  Hilbert
space $L^2(\nu)$ into a Gaussian Hilbert space $\cH$, \ie, a Hilbert space
of Gaussian random variables;  
$\SI$ can be interpreted as a stochastic integral, 
see \cite[Section VII.2]{SJIII}.
We let here $L^2(\nu)$ be the space of complex square-integrable functions, but
regard it as a real Hilbert space with the inner product 
$\innprod{f,g}_\nu:=\Re\int f\bar g\dd\nu$.
Then \eqref{t1a}--\eqref{t1b} 
can be stated as
\begin{equation}\label{si1}
      Z_n\qqw X_{n,k}
\dto \zeta_k
:=\SI\bigpar{z^k-m^{-k}},
\end{equation}
jointly for all $k\ge0$.
This yields a convenient calculus for joint limits.

\begin{ex}\label{ESI1}
 Let $k,\ell\ge0$.
Then, by \eqref{ynk},
\begin{equation}
  X_{n-\ell,k}=X_{n,k+\ell}-m^{-k}X_{n,\ell}
\end{equation}
and thus, recalling \eqref{znkz},
jointly for all $k,\ell\ge0$,
\begin{equation}\label{esi1a}
  \begin{split}
  Z_{n-\ell}\qqw X_{n-\ell,k}
&\dto
  m^{\ell/2}\bigpar{\zeta_{k+\ell}-m^{-k}\zeta_\ell}
=m^{\ell/2}\SI\bigpar{z^{k+\ell}-m^{-k}z^\ell}
\\&
=\SI\bigpar{(zm\qq)^\ell(z^{k}-m^{-k})}.    
  \end{split}
\end{equation}
Denoting this limit by $\zeta_k\llll$, we have of course
$\zeta_k\llll\eqd\zeta_k$, which corresponds to the fact that
$|zm\qq|^\ell=1$ on the support of $\nu$. More interesting is the joint
convergence
$(  Z_{n}\qqw X_{n,k},  Z_{n-\ell}\qqw X_{n-\ell,k})\dto(\zeta_k,\zeta_k\llll)$,
with covariance
\begin{equation}\label{eq85}
  \begin{split}
\Cov\bigpar{\zeta_k,\zeta_k\llll}
&=  
\innprod{z^{k}-m^{-k},(zm\qq)^\ell(z^{k}-m^{-k})}_\nu
\\&
=
\Re \int_{|z|=m\qqw} (zm\qq)^\ell|z^{k}-m^{-k}|^2  \dd\nu.
\end{split}
\end{equation}

The measure $\nu$ is by \eqref{nu}
absolutely continuous on the circle $|z|=m\qqw$.
With the change of variables $z=m\qqw e^{\ii \theta}$, we have
$(zm\qq)^\ell=e^{\ii\ell\theta}$ and the Riemann--Lebesgue lemma shows that
$\Cov\bigpar{\zeta_k,\zeta_k\llll}\to0$ as $\ell\to\infty$, for fixed every $k$.
Roughly speaking, $X_{n-\ell,k}$ and $X_{n,k}$ are thus essentially
uncorrelated when $\ell$ is large, which justifies the claim in
\refS{Smain}
that there is
only a short-range dependence in this case.
\end{ex}

\begin{ex}
  \label{ESI2}
We can define $X_{n,k}$ by \eqref{ynk} also for $k<0$.
Then, the calculations in \refE{ESI1} apply to any $\ell\ge0$ and any
$k\ge-\ell$. Hence, replacing $n$ by $n+\ell$ in \eqref{esi1a},
for any fixed $\ell$,
\begin{equation}\label{esi2}
  Z_{n}\qqw X_{n,k}
\dto
\SI\bigpar{(zm\qq)^\ell(z^{k}-m^{-k})}    
\end{equation}
jointly for all $k\ge-\ell$.
Since the factor $(zm\qq)^\ell$ does not depend on $k$ and has absolute value 1,
this means (by changing the isomorphism $\SI$) 
that \eqref{si1} holds jointly for all
$k\ge-\ell$.
Since $\ell$ is arbitrary, this means that \eqref{si1}
holds jointly for all $k\in\bbZ$. 
Hence,  \eqref{t1a}--\eqref{t1b} 
extend to all $k\in\bbZ$, as claimed in \refR{Rk<0}.
\end{ex}

\begin{ex}
  \label{ESI3}
We have, by \eqref{ynk},
\begin{equation}
  m^{-\ellj}Z_{n+\ellj}-m^{-\ellj-1}Z_{n+\ellj+1}=m^{-\ellj}X_{n+\ellj+1,1}.
\end{equation}
Hence, by \refL{LX1}, for $j\ge0$,
\begin{equation}\label{jw1}
\norm{m^{-\ellj}Z_{n+\ellj}-m^{-\ellj-1}Z_{n+\ellj+1}}_2 
\le  C m^{-\ellj + (n+\ellj+1)/2}
=  C m^{n/2-\ellj/2}.
\end{equation}
Summing \eqref{jw1} for $\ellj\ge\ell$ we obtain, recalling \eqref{Zoo},
\begin{equation}
\norm{m^{-\ell}Z_{n+\ell}-m^{n}\cZ}_2 
\le  C m^{n/2-\ell/2}
\end{equation}
for $n\ge1$ and $\ell\ge0$.
Hence,  as $\ell\to\infty$,
$m^{-n/2}\bigpar{m^{-\ell}Z_{n+\ell}-m^{n}\cZ}\to 0$ in $L^2$, and thus in
probability, uniformly in $n$. 
Since $Z_n/m^n\asto\cZ>0$, 
and thus $\sup_n m^n/Z_n<\infty $ a.s.,
it follows that, still
uniformly in $n$,
\begin{equation}
  \label{jw2}
Z_n\qqw\bigpar{m^{-\ell}Z_{n+\ell}-m^{n}\cZ}\pto 0,
\qquad \ell\to\infty.
\end{equation} 

Define the random variables
\begin{equation}\label{jv3}
  Y_{n,\ell}:= Z_n\qqw\bigpar{Z_n-m^{-\ell}Z_{n+\ell}}
=-Z_n\qqw m^{-\ell} X_{n,-\ell},
\qquad \ell\ge0.
\end{equation} 
Then, by \eqref{si1} and \refE{ESI2},  for every fixed $\ell$,
\begin{equation}\label{vnell}
  Y_{n,\ell}\dto- m^{-\ell}\zeta_{-\ell}=\SI\bigpar{1-m^{-\ell}z^{-\ell}},
\qquad \ntoo.
\end{equation}
Furthermore, by \eqref{jw2}, $Y_{n,\ell}\pto Z_n\qqw\bigpar{Z_n-m^n\cZ}$ as
$\ell\to\infty$, uniformly in $n$.
Finally, $|mz|=m\qq>1$ on the support of $\nu$, and thus $1-(mz)^{-\ell}\to
1$ in $L^2(\nu)$ as $\ell\to\infty$; hence
$\SI\bigpar{1-m^{-\ell}z^{-\ell}}\to\SI(1)$ as $\ell\to\infty$, in $L^2$ and
thus in distribution.
It follows that we can let $\ell\to\infty$ in \eqref{vnell},
 see \cite[Theorem 4.2]{Billingsley}, and obtain
\begin{equation}
Z_n\qqw\bigpar{Z_n-m^n\cZ}
\dto \SI\bigpar{1},
\qquad \ntoo.
\end{equation}
This is jointly with all \eqref{si1}, and thus, jointly for all $k\in\bbZ$,
\begin{equation}\label{sw3}
  Z_n\qqw\bigpar{Z_{n-k}-m^{n-k}\cZ}
=Z_n\qqw\bigpar{X_{n,k}+m^{-k}(Z_n-m^n\cZ)}
\dto \SI\bigpar{z^k}.
\end{equation}
Conversely, \eqref{si1} follows immediately from \eqref{sw3}.

In the Galton--Watson case (\refE{EGW}), \eqref{sw3} is 
equivalent to the case $q=0$ of \cite[Theorem (2.10.2)]{Jagers}.
\end{ex}

\subsection{The case $\ggx=m\qqw$}\label{SS2}
Assume now that \refT{T2}  applies; thus $\ggx= m\qqw$ and \eqref{AT3} holds.

In this case, let $\nu$ be the discrete measure , with support $\gGGG$,
\begin{equation}\label{nu2}
  \nu:=
(m-1)\sumpq |1-\gam_p|\qww\,|\mmu'(\gam_p)|\qww\gS(\gam_p) \gd_{\gam_p},
\end{equation}
and consider an isomorphism $I$ of $L^2(\nu)$ into a Gaussian Hilbert space
as above. Then \eqref{t2a}--\eqref{t2b} can be stated as \eqref{si1}, with
the normalizing factor changed from $Z_n\qqw$ to $(nZ_n)\qqw$.

With this change of normalization of $X_{n,k}$, all results in the preceding
subsection hold, with one exception:
The measure $\nu$ has finite support, and thus there exists a sequence
$\ell_j\to\infty$ such that $(zm\qq)^{\ell_j}\to1$ as $j\to\infty$ for every
$z\in\supp(\nu)=\gGGG$;
hence \eqref{eq85} implies
$\limsup_\ltoo\Corr\bigpar{\zeta_k,\zeta_k\llll}=1$.
Hence, 
although the convergence in \eqref{t2b} is mixing, so there is no
dependence on the initial generations as in the case $\ggx<m\qqw$, 
there is a dependence over longer ranges than in the case $\ggx>m\qqw$.

Furthermore, 
 each $\zeta_k$ now belongs to the (typically $q$-dimensional) space spanned
by $\zeta_1,\dots,\zeta_q$, which 
yields the linear dependence of the limits $\zeta_k$ claimed in \refS{Smain}.

\begin{ex}
  In the simplest case, $\gGGG=\set{-m\qq}$. (See \refE{E2} for an example.)
Then $\zeta_k=\bigpar{(-1)^km^{-k/2}-m^{-k}}\zeta$ for some $\zeta\sim
N\bigpar{0,\nu\set{-m\qq}}$ and all $k\in\bbZ$.

Furthermore, $zm\qq=-1$ on
$\supp\nu$, and thus \eqref{esi1a} yields $\zeta_k\llll=(-1)^\ell\zeta_k$;
in particular, $\zeta_k\llll=\zeta_k$ for every even $\ell$.
\end{ex}

\subsection{The case $\ggx<m\qqw$}
In this case, there is no limit, but we can argue with the components of the
approximating sum in \eqref{t3} in the same way as with $\zeta_k$ in
Examples \ref{ESI1}--\ref{ESI2}, and draw the conclusion that \eqref{t3},
interpreted component-wise,
extends also to $k<0$,
as claimed in \refR{Rk<0}.
We omit the details.

\section{Random characteristics}\label{Schar}

A random characteristic is a random function $\gf(t):\ooo\to\bbR$ defined on
the same probability space as the prototype offspring process $\Xi$; we
assume that each individual $x$ has an independent copy $(\Xi_x,\gf_x)$ of
$(\Xi,\gf)$, and interpret $\gf_x(t)$ as the characteristic of $x$ at age $t$.
We consider as above  the lattice case, and 
define, denoting the birth time of $x$ by $\tau_x$,
\begin{equation}\label{zgf}
  \zgf_n:=\sum_{x:\tau_x\le n}\gf_x(n-\tau_x),
\end{equation}
the total characteristic of all individuals at time $n$.
See further \citet{Jagers}.
We assume:
\begin{Cnoenumerate}
\item \label{Agf}
There exists $R_2<m\qq$ such that
$\E[\gf(k)^2]\le C R_2^{2k}$ for some $C<\infty$ and all $k\ge0$.
\end{Cnoenumerate}

We define
\begin{align}
\glgf_k & :=\E \gf(k),
\qquad k\ge0,
\label{glgfk}
\\
\gLgf(z)&:=\sumk \glgf_kz^k,\label{gLgf}
\\
\glgf&:=
\bigpar{1-m\qw}\gLgf\bigpar{m\qw}
=\sumk \bigpar{m^{-k}-m^{-k-1}}\glgf_k, \label{glgf}
\\
\gamb{j,k}&:=
\Cov\bigpar{\gf(j),N_k},
\end{align}
and also  $\glgf_{k}:=0$ for $k<0$. 
Note that \ref{Agf} implies
\begin{equation}\label{glgfb}
  |\glgf_k|=|\E\gf(k)|\le C R_2^k.
\end{equation}
Hence, the sum in \eqref{gLgf} converges absolutely at least for $|z|\le m\qqw$;
in particular,
the sum in \eqref{glgf} converges absolutely. 

We split the characteristic into its mean $\glgf_k=\E\gf(k)$
and the centered part
\begin{equation}
  \chio(k):=\chi(k)-\E\chi(k)=\chi(k)-\glgf_k.
\end{equation}

We define
\begin{equation}\label{vgf}
  \vgf_{n,k}:=
\sum_{x:\tau_x=n}\chio_x(k)=
\sum_{x:\tau_x=n}\bigpar{\gf_x(k)-\glgf_k}
=\sum_{x:\tau_x=n}\gf_x(k)- \glgf_k B_n.
\end{equation}
Then, \eqref{zgf} implies
\begin{equation}
  \zgfo_n=\sumkn \vgf_{n-k,k} 
=\sumk \vgf_{n-k,k}
\end{equation}
and, furthermore,
\begin{equation}
  \zgf_n=\sumkn\bigpar{\vgf_{n-k,k}+\glgf_kB_{n-k}}
=\zgfo_n+\sumk\glgf_kB_{n-k}.
\end{equation}
Hence, recalling \eqref{glgf}, \eqref{bn} and \eqref{ynk},
we have the decomposition
\begin{align}\label{eva}
\zgf_n-\glgf Z_n 
&= \zgfo_n+\sumk\glgf_k\bigpar{B_{n-k}-(m^{-k}-m^{-k-1})Z_n}
\notag\\&
= \zgfo_n+\sumk\glgf_k\bigpar{X_{n,k}-X_{n,k+1}}
\notag\\&
= \zgfo_n +\sum_{k=1}^n\bigpar{\glgf_k-\glgf_{k-1}}X_{n,k}
=\zchia+\zchib,
\end{align}
where $\vecgdl$ is the vector $\bigpar{\glgf_k-\glgf_{k-1}}_{k=0}^\infty$.
Here
$\vecgdl\in \ell^2_{R\qw}$ by \eqref{glgfb}, 
and thus
the asymptotic behaviour of $\zchib$ is given by Theorems \ref{T1}--\ref{T3}.

The term $\zchia$ in \eqref{eva} is asymptotically normal after normalization,
for any value of $\ggx$, as shown by the following theorem.
(Note that the assumption $\E\chi(k)=0$ is equivalent to $\chi=\chio$.)

\begin{thm}  \label{Tgf2}
  Assume \refAA{} and \ref{Agf}. 
If\/ $\E\chi(k)=0$ for every $k\ge0$, then
 as \ntoo,
\begin{equation}\label{tgf2a}
  Z_n\qqw \zgf \dto \zeta^\gf,
\end{equation}
for some  normal random variable $\zeta^\gf$ with mean $\E\zeta^\gf=0$ and
variance 
\begin{equation}\label{tgf2b}
  \begin{split}
\Var\bigpar{\zeta^\gf}
&=
\frac{m-1}m
\sumk m^{-k}\Var\bigpar{\gf(k)}.
  \end{split}
\end{equation}
\end{thm}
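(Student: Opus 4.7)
The plan is to exploit the mean-zero hypothesis to build a single martingale indexed by individuals, in the spirit of the second proof of \refT{T1} in \refS{SpfT1-B}, and apply the same martingale central limit theorem; the spectral machinery underlying Theorems~\ref{T1}--\ref{T3} is not needed when $\E\gf(k)=0$.

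Recycling the birth-order labelling and filtration $(\GG_k)_k$ of \refS{SpfT1-B}, let $\tau_i$ be the birth time and $\gf_i$ the (independent) characteristic of individual~$i$. By \ref{Amu0} the individuals with $\tau_i\le n$ are exactly $1,\dots,Z_n$, and $\tau_i$ is $\GG_{i-1}$-measurable, so the increments $\gDMx_{n,i}:=\gf_i(n-\tau_i)\ett{\tau_i\le n}$ satisfy $\E(\gDMx_{n,i}\mid\GG_{i-1})=0$, making $\Mx_{n,k}:=\sum_{i=1}^k\gDMx_{n,i}$ a $(\GG_k)_k$-martingale with $\Mx_{n,Z_n}=\zgf_n$. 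Grouping individuals by birth time, its conditional quadratic variation is
\begin{equation}
\Vx_n=\sumkn B_{n-k}\Var\gf(k).
\end{equation}
Normalising $\My_{n,k}:=m^{-n/2}\Mx_{n,k}$ and $\Vy_n:=m^{-n}\Vx_n$, the termwise limit $B_{n-k}/m^{n-k}\asto(1-m\qw)\cZ$ from \eqref{bn} and \eqref{Zoo}, combined with the uniform bound $B_{n-k}/m^{n-k}=\Oas(1)$ and the summable dominating series $\sumk m^{-k}\Var\gf(k)<\infty$ (ensured by \ref{Agf} since $R_2^2/m<1$), gives by dominated convergence
\begin{equation}
\Vy_n\asto \cZ\,(1-m\qw)\sumk m^{-k}\Var\gf(k).
\end{equation}

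The main technical step, and the main obstacle, is the conditional Lindeberg condition for $(\My_{n,k})_k$. Setting $g_k(x):=\E\bigpar{\gf(k)^2\ett{|\gf(k)|>x}}$, the Lindeberg sum equals
\begin{equation}
m^{-n}\sumkn B_{n-k}\,g_k\bigpar{\eps m^{n/2}}
\le \Oas(1)\sumkn m^{-k}g_k\bigpar{\eps m^{n/2}}.
\end{equation}
I would split this at a threshold $K$: for each fixed $k\le K$, $g_k(\eps m^{n/2})\to 0$ as \ntoo{} by dominated convergence since $\E\gf(k)^2<\infty$, so the low-$k$ part vanishes; for $k>K$, the bound $g_k\le\Var\gf(k)\le CR_2^{2k}$ leaves a geometric tail $\sum_{k>K}(R_2^2/m)^k\to 0$ as $K\to\infty$, uniformly in $n$. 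This split-summation replaces the single-random-variable envelope used in \refS{SpfT1-B}, which is unavailable here because $\gf(k)$ has no uniform $L^2$ bound independent of $k$.

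Finally, Corollary~3.2 of \cite{HH} applied to $(\My_{n,k})_k$ summed at the stopping time $Z_n$ (exactly as in \refS{SpfT1-B}) yields the mixing convergence $\My_{n,Z_n}/\Vy_n\qq\dto N(0,1)$; combined with the above limit of $\Vy_n$ and $Z_n/m^n\asto\cZ>0$, Slutsky's lemma gives $Z_n\qqw\zgf_n\dto N\bigpar{0,(1-m\qw)\sumk m^{-k}\Var\gf(k)}$, which is \eqref{tgf2a}--\eqref{tgf2b}.
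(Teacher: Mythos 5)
Your proof is correct and takes a genuinely different route from the paper's. The paper proves \refT{Tgf2} jointly with \refT{Tgf1}, reusing the machinery of \refS{SpfT1-A}: it bounds $\E\bigpar{\vgf_{n-k,k}}^2\le Cm^{n-k}R_2^{2k}$, truncates the characteristic to $k\le K$, applies an extension of \refL{Leta} (the classical i.i.d.\ CLT) to get a joint limit for the $\vgf_{n-k,k}$ and the $W_{n-k,j}$, sums over $k\le K$, and then lets $K\to\infty$ via \cite[Theorem~4.2]{Billingsley}. You instead adapt the martingale proof of \refS{SpfT1-B}, building the individual-indexed martingale $\Mx_{n,k}=\sum_{i\le k}\gf_i(n-\tau_i)\ett{\tau_i\le n}$, whose conditional quadratic variation $\Vx_n=\sumkn B_{n-k}\Var\gf(k)$ makes the variance formula \eqref{tgf2b} transparent, and you invoke \cite[Corollary~3.2]{HH}. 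Your diagnosis of the one obstacle is exactly right: the fixed $L^2$ envelope $U$ that dispatches Lindeberg in \refS{SpfT1-B} has no analogue here, because \ref{Agf} provides no single square-integrable variable dominating $\gf(k)$ uniformly in $k$; your split-at-$K$ of the Lindeberg sum (small $k$ by dominated convergence for each fixed $k$, large $k$ by the geometric tail $\sum_{k>K}(R_2^2/m)^k$) repairs this cleanly. Your route is shorter and more self-contained for \refT{Tgf2} alone, and it makes plain that no spectral information about $\gGG$ enters; the paper's route shares scaffolding with \refT{Tgf1}, where joint convergence of $\zgfo_n$ with $\innprod{\vecx_n,\vecgdl}$ is also required and a combined martingale would have to be built.
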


Before proving \refT{Tgf2}, we note that
in the case $\ggx>m\qqw$, Theorems \ref{Tgf2} and \ref{T1}
show that $\zchia$ and $\zchib$ in
\eqref{eva} both are asymptotically normal after normalization by $Z_n\qq$. 
In this case, as shown below, 
the two terms are jointly asymptotically normal, leading by \eqref{eva} to
the following extension of \refT{T1} (which is the deterministic case
$\gf(k)=\sum_{j\le k} a_j$).

\begin{thm}  \label{Tgf1}
  Assume \refAAAA. Then, as \ntoo,
\begin{equation}
  Z_n\qqw \bigpar{\zgf-\glgf Z_n} \dto \zeta^\gf,
\end{equation}
for some  normal random variable $\zeta^\gf$ with mean $\E\zeta^\gf=0$ and
variance 
\begin{equation}\label{tgf1}
  \begin{split}
\Var\bigpar{\zeta^\gf}
=
\frac{m-1}m
&\Biggl(
\sumk m^{-k}\Var\bigpar{\gf(k)}
\\
&
-2\oint_{|z|=m\qqw} 
\frac{(1-z)\gLgf(z)-\glgf}{(z-1)(1-\mmu(z))}
\sumk\sumji\gamb{k j} z^j \bar z^k 
\frac{|\dz|}{2\pi m\qqw}
\\
&
+\oint_{|z|=m\qqw}\frac{\lrabs{(1-z)\gLgf(z) -\glgf}^2}
{|1-z|^2\,|1-\mmu(z)|^2}	
\sum_{i,j}\gs_{ij}z^i\bar z^j \frac{|\dz|}{2\pi m\qqw}.
\Biggr)
  \end{split}
\end{equation}
\end{thm}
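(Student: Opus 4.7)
The plan is to use the decomposition $\zgf_n - \glgf Z_n = \zchia + \zchib$ from \eqref{eva} and establish joint asymptotic normality of $Z_n\qqw(\zchia, \zchib)$ with the right covariance matrix. By \eqref{glgfb}, $\vecgdl \in \ell^2_{R\qw}$ whenever $R > R_2$, so we may choose $R < m\qq$ with $R > R_2$ and $\mmu(R\qw) < \infty$; then \refT{T1} applies with $\vec a = \vecgdl$ and yields the third term of \eqref{tgf1} via \eqref{t1b}, using $\sum_\ell(\glgf_\ell - \glgf_{\ell - 1}) z^\ell = (1 - z)\gLgf(z)$ and $\sum_\ell (\glgf_\ell - \glgf_{\ell-1}) m^{-\ell} = \glgf$ (by \eqref{glgf}). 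The first term is the variance from \refT{Tgf2}. What is left is the joint limit and the cross-covariance.

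The joint limit is proved by a combined individual-level martingale adapting \refS{SpfT1-B}. Order individuals by birth, let $\GG_k$ and $\tau_{i'}$ be as there, set $\alpha_k := \innprod{T^k(\vec v), \vecgdl}$, and define
\[
\gDMx_{n, i'} := \sum_{j \ge 1} \alpha_{n - \tau_{i'} - j}(N^{(i')}_j - \mu_j),
\qquad
\gDMxgf_{n, i'} := \gf_{i'}(n - \tau_{i'}) - \glgf_{n - \tau_{i'}}.
\]
Both have zero mean given $\GG_{i'-1}$ (since $(\Xi_{i'}, \gf_{i'})$ is then independent of the past). For real $s, t$, the partial sums of $s \gDMxgf_{n, i'} - t \gDMx_{n, i'}$ form a $(\GG_k)$-martingale that, summed to $k = Z_n$, equals $s\zchia + t\zchib$ by \eqref{Mx}--\eqref{vende} and $\zchib = -M_{n, n}$. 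The conditional Lindeberg condition follows from $|s \gDMxgf - t \gDMx|^2 \le 2 s^2 (\gDMxgf)^2 + 2 t^2 (\gDMx)^2$ together with the individual Lindeberg conditions: for $\gDMx$ as in \refS{SpfT1-B}, and for $\gDMxgf$ by the same argument with \ref{Agf} (giving $\norm{\gf(k)}_2 \le C R_2^k$ with $R_2 < m\qq$) playing the role of \ref{Ar}.

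The conditional quadratic variation decomposes into $s^2$-, $t^2$- and $st$-pieces. The diagonal pieces give (after division by $Z_n$) the variances from \refT{Tgf2} and \eqref{VZ}. For the cross piece,
$\E\bigpar{\gDMxgf_{n, i'}\gDMx_{n, i'} \mid \GG_{i'-1}} = \sum_j \alpha_{n - \tau_{i'} - j}\gamb{n - \tau_{i'}, j}$,
so grouping by $\ell = \tau_{i'}$ (with $B_\ell$ individuals) gives
\[
\frac{1}{Z_n}\sum_{\ell = 0}^n B_\ell \sum_{j \ge 1}\alpha_{n - \ell - j}\gamb{n - \ell, j}
\asto (1 - m\qw)\sum_{p \ge 0} m^{-p}\sum_{j \ge 1}\alpha_{p - j}\gamb{p, j},
\]
justified by $B_\ell/Z_n \asto (1 - m\qw)m^{-(n - \ell)}$, $|\alpha_k| \le C R_1^k$ for any $R_1 < m\qq$, and $|\gamb{p, j}| \le C R_2^p r^{-j}$ (Cauchy--Schwarz via \ref{Agf} and \ref{Ar}), giving the same dominated convergence as in \eqref{ub}--\eqref{VZ}. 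The martingale CLT \cite[Corollary 3.2]{HH} combined with Cram\'er--Wold then yields joint asymptotic normality, and the displayed cross-term contributes $-2\Cov(\zchia/\sqrt{Z_n}, \zchib/\sqrt{Z_n})$ in the limit.

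For the contour-integral form, \eqref{qkq} applied with $\vec a = \vecgdl$ gives $\sum_k \alpha_k z^k = \frac{(1-z)\gLgf(z) - \glgf}{(z - 1)(1 - \mmu(z))}$ for $|z| < R\qw$, and the orthogonality $\oint_{|z| = m\qqw} z^a \bar z^b \frac{|\dz|}{2\pi m\qqw} = m^{-(a + b)/2}[a = b]$ (as in \eqref{m2}--\eqref{m3}) identifies
\[
\sum_{p \ge 0} m^{-p}\sum_{j \ge 1}\alpha_{p - j}\gamb{p, j}
= \oint_{|z| = m\qqw} \frac{(1 - z)\gLgf(z) - \glgf}{(z - 1)(1 - \mmu(z))} \sum_{k, j}\gamb{k, j} z^j \bar z^k \frac{|\dz|}{2\pi m\qqw}.
\]
Setting $s = t = 1$ and assembling the three pieces yields \eqref{tgf1}. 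The main obstacles are the sign bookkeeping for the cross term (via $\zchib = -M_{n, n}$) and verifying the dominated-convergence step for the double sum in $j$ and $\ell$; once these are dispatched, the structure repeats the martingale argument of \refS{SpfT1-B} in essentially vector-valued form.
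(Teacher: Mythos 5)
Your approach is correct, but it takes a genuinely different route from the paper's. The paper proves Theorems~\ref{Tgf2} and~\ref{Tgf1} by the machinery of the \emph{first} proof of \refT{T1} (\refS{SpfT1-A}): it truncates $\gf$ to finitely many coordinates, extends \refL{Leta} to a joint i.i.d.\ CLT including the $\vgf_{n-k,k}$ alongside the $W_{n-k,j}$, passes to the infinite-dimensional limit via \cite[Theorem 4.2]{Billingsley}, and reads off the covariance directly from the Gaussian limit vector. You instead extend the \emph{second} proof (\refS{SpfT1-B}): a combined individual-level martingale $s\gDMxgf_{n,i'} - t\gDMx_{n,i'}$, conditional quadratic variation with an explicit cross piece, the martingale CLT \cite[Corollary 3.2]{HH}, and Cram\'er--Wold. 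The martingale route avoids the truncation step the paper needs for the finite-dimensional CLT, and expresses the cross-covariance more intrinsically through $\E(\gDMxgf\gDMx\mid\GG_{i'-1})=\sum_j\alpha_{n-\tau_{i'}-j}\gamb{n-\tau_{i'},j}$; the paper's route, on the other hand, exploits the already-established \eqref{ele33} and the explicit Gaussian representation, making the variance computation an almost mechanical inner-product calculation. Both are legitimate, and your dominated-convergence bookkeeping for the cross piece (via $|\alpha_k|\le CR_1^k$, $|\gamb{p,j}|\le CR_2^pR^j$, $R<R_1<m\qq$, $R_1R_2<m$) checks out.

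One point deserves more than the ``by the same argument'' you give it: the conditional Lindeberg condition for the $\gDMxgf$ piece is \emph{not} quite the same as for $\gDMx$. For $\gDMx$ the paper's \eqref{holm} gives a pointwise bound $|\gDMx_{n,i}|\le CR_1^{n-\tau_i}U_i$ with a \emph{single} i.i.d.\ integrable $U$, so a single tail function $h$ controls everything. For $\gDMxgf_{n,i'}=\chio_{i'}(n-\tau_{i'})$, the distribution of the increment depends on $n-\tau_{i'}$ and does not factor as ``deterministic scale $\times$ i.i.d.''; you only have the second-moment bound $\E\chio(k)^2\le CR_2^{2k}$. The Lindeberg sum $m^{-n}\sum_\ell B_{n-\ell}\,\E\bigpar{\chio(\ell)^2\ett{|\chio(\ell)|>\eps m^{n/2}}}$ therefore needs a truncation in $\ell$: the tail $\ell>L$ is uniformly small using $\E\chio(\ell)^2\le CR_2^{2\ell}$ and $\sup_k B_k/m^k<\infty$ a.s., and the finitely many $\ell\le L$ each go to $0$ since $\E\bigpar{\chio(\ell)^2\ett{|\chio(\ell)|>x}}\to0$. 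That truncation is in effect what the paper's proof does explicitly at the outset; your martingale proof needs it too, just tucked inside the Lindeberg verification. With that caveat spelled out, the argument is complete.
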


\begin{rem}
  In both Theorems \ref{Tgf2} and \ref{Tgf1},
joint asymptotic normality for several characteristics, with a corresponding
formula for asymptotic covariances, follow by the proof, or by the
Cram\'er--Wold device.
\end{rem}

\begin{proof}[Proof of Theorems \ref{Tgf2} and \ref{Tgf1}]
We use results from \refS{SpfT1-A}, and assume as we may that $R$ is chosen
with $R_2<R<m\qq$.

Given $B_{n-k}$, $\vgf_{n-k,k}$ is the sum of $B_{n-k}$ independent copies
of $\chio(k) = \gf(k)-\E\gf(k)$. Hence,
using \ref{Agf}, \eqref{EZ} and $B_{n-k}\le Z_{n-k}$,
\begin{equation}
  \E\bigpar{\vgf_{n-k,k}}^2
=\E\bigpar{ \E\bigpar{\vgf_{n-k,k}}^2\mid B_{n-k}}
=\Var\bigpar{\gf(k)} \E B_{n-k}
\le C m^{n-k} R_2^{2k}
\end{equation}
and, using \eqref{glgfb} and \refL{LX1},
\begin{equation}
  \E \bigpar{\glgf_k\xpar{X_{n,k}-X_{n,k+1}}}^2
\le C R_2^{2k} \bigpar{\E X_{n,k}^2 + \E X_{n,k+1}^2}
\le C m^n (R_2/R)^{2k}.
\end{equation}
Since we assume $R_2<R<m\qq$, it follows 
by standard arguments
that if we replace $\gf$ by the truncated
characteristic $\gf_K(k):=\gf(k)\ett{k\le K}$, then the error 
$Z_n\qqw \bigpar{\zgf_n-\glgf Z_n-(Z_n^{\gf_K}-\gl^{\gf_K}Z_n)}$
tends to 0 in probability as $K\to\infty$, uniformly in $n$,
and as a consequence, 
 see \cite[Theorem 4.2]{Billingsley},
it suffices to prove both theorems for the truncated
characteristic $\gf_K$. Hence we may in the sequel assume (changing
notation) that $\gf(k)=0$ for $k>K$, for some $K<\infty$.

Let $\vecgth=(\gth_0,\gth_1,\dots)$ be a random vector such that
$(\vecgth,\veceta)$ is jointly normal with mean $0$ and covariances given
by \eqref{coveta} and
\begin{align}
\Cov(\gth_j,\gth_k)&=\Cov\bigpar{\gf(j),\gf(k)}  ,
\\
\Cov(\gth_j,\eta_k)&=
\gamb{j,k}:=
\Cov\bigpar{\gf(j),N_k}. \label{gamb}
\end{align}
Let $\bigpar{\vecgth\kk,\veceta\kk}$ be independent copies of
$(\vecgth,\veceta)$.

The proof of \refL{Leta} extends to show that \eqref{leta} holds jointly
with
\begin{equation}\label{krk}
  Z_n\qqw \vgf_{n-k,k} \dto \bigpar{1-m\qw}\qq m^{-k/2} \gth\kk_k,
\qquad k\ge0.
\end{equation}

Summing \eqref{krk} over $k\le K$, we obtain
\begin{equation}\label{krk2}
  Z_n\qqw \zgfo_n \dto \zeta^\gf:= \bigpar{1-m\qw}\qq\sumk  m^{-k/2} \gth\kk_k,
\end{equation}
which yields \eqref{tgf2a} and \eqref{tgf2b} in the case $\chi=\chio$;
recall that the terms $\gth\kk_k$ are independent.
This completes the proof of \refT{Tgf2}.

In the remainder of the proof, we thus consider \refT{Tgf1}, and thus assume
that \ref{Aroots} holds. We have just shown that \eqref{leta} holds jointly with
\eqref{krk}.
Hence, by the proof in \refS{SpfT1-A}, \eqref{ele33} holds jointly with
\eqref{krk} for all $k$, and thus also with \eqref{krk2}.
Consequently,
by \eqref{eva}, 
\begin{multline}
  \bigpar{1-m\qw}\qqw Z_n\qqw \bigpar{\zgf_n-\glgf Z_n}
\\
\dto
\sumk m^{-k/2}\gth_k\kk
-\sumk\sumji m^{-(k+j)/2}\eta_j\kkx{k+j} \innprod{T^k(\vec v),\vecgdl}.  
\end{multline}
Write the \rhs{} as $A_1-A_2$, and note that
$A_1$ and $A_2$ are jointly normal with means $0$.
It remains to calculate $\Var(A_1-A_2)$.

Since the terms in the sum $A_1$ are independent,
we have, cf.\ \eqref{krk2} and \eqref{tgf2b},
\begin{equation}\label{kpk}
  \Var(A_1)=\sumk m^{-k}\Var\bigpar{\gth_k}
=\sumk m^{-k}\Var\bigpar{\gf(k)},
\end{equation}
which yields the first term in \eqref{tgf1}, 

$\Var(A_2)$ was
calculated in \refS{SpfT1-A}, see \eqref{ele18} and \eqref{t1b},
which yields the last term in \eqref{tgf1}, 
using $\sum_k (\glgf_k-\glgf_{k-1})z^k = (1-z)\gLgf(z)$ and \eqref{glgf}.

Finally, 
using \eqref{gamb} and \eqref{qkq},
\begin{equation}\label{kqk}
  \begin{split}
&\Cov(A_1,A_2)=
\sumk\sumji m^{-(k+j)}\gamb{k+j,j} \innprod{T^k(\vec v),\vecgdl}
\\&\quad
=
\sumk\sumji  \innprod{T^k(\vec  v),\vecgdl}
\oint_{|z|=m\qqw} z^{k+j}
\suml \bar z^\ell \gamb{\ell,j}
\frac{|\dz|}{2\pi m\qqw}
\\&\quad
=\oint_{|z|=m\qqw} 
 \innprod{(1-zT)\qw(\vec v),\vecgdl}
\suml\sumji z^j \bar z^\ell \gamb{\ell,j}
\frac{|\dz|}{2\pi m\qqw}
\\&\quad
=\oint_{|z|=m\qqw} 
\frac{(1-z)\gLgf(z)-(1-m\qw)\gLgf(m\qw)}{(z-1)(1-\mmu(z))}
\suml\sumji z^j \bar z^\ell \gamb{\ell,j}
\frac{|\dz|}{2\pi m\qqw}
  \end{split}
\end{equation}
The result \eqref{tgf1} follows by combining \eqref{kpk}, \eqref{kqk} and 
\eqref{t1b}, recalling \eqref{glgf}.
\end{proof}

\refT{Tgf1} yields asymptotic normality of $Z_n^\gf$ when $\ggx>m\qqw$,
and \refT{Tgf2} shows the same for any $\ggx$ in the special case
when $\E\chi(k)=0$ for every $k$.
It remains to consider the case when $\glgf_k=\E\chi(k)\neq0$ for some $k$
and $\ggx\le m\qqw$.
If $\ggx=m\qqw$ and \eqref{AT3} holds, 
then \refT{T2} shows that $\zchib/\sqrt{nZ_n}\dto
N(0,\gss)$, where $\gss$ is given  by \eqref{t2b} and
$\gss>0$ except in degenerate cases.
Since \refT{Tgf2} implies that $\zchia/\sqrt{nZ_n}\pto0$, it follows from
\eqref{eva} that $(\zgf_n-\glgf Z_n)/\sqrt{nZ_n}\dto N(0,\gss)$.
Similarly, 
if $\ggx<m\qqw$, then \refT{Tgf2} implies $\ggx^n\zchia\pto0$, and 
\eqref{eva} shows that 
$\zgf_n-\glgf Z_n$ has the same (oscillating) asymptotic behaviour 
as $\zchib$, given by \refT{T3}.

Summarizing, if $\ggx\le m\qqw$, then the randomness in the characteristic
$\chi$ only gives an effect of smaller order than the mean $\E\chi$, and
unless the mean vanishes (or the limits degenerate), 
$\zgf_n$ has the same asymptotic behaviour as if $\chi$ is replaced by the
deterministic $\E\chi$, which is treated by Theorems \ref{T2} and \ref{T3}.

\begin{ex}\label{Edeath}
  We have in the present paper for simplicity assumed \ref{Adeath}, 
that  there are no  deaths. 
Suppose now, more generally, that each individual has a random lifelength
$\ell\le\infty$, as usual with \iid{} copies $(\Xi_x,\ell_x)$ for all
individuals $x$.
The results in \refS{Smain} 
apply if we ignore deaths and let $Z_n$ denote the number
of individuals born up to time $n$, living or dead.
Moreover, the number of living individuals at time $n$ is $\zgf_n$, for the
characteristic $\chi(k):=\ett{\ell> k}$.

Similarly, for example, the number of living individuals at time $n-j$ is
$Z^{\chi_j}_n$ with $\chi_j(k):=\ett{\ell>k-j\ge0}$.
The analogue of $X_{n,j}$ in \eqref{ynk} but counting only living
individuals is thus given by $Z_n^{\chi_j-m^{-j}\chi}$, and results extending
Theorems \ref{T1}--\ref{T3} without assuming \ref{Adeath} follow.
We leave the details to the reader.
\end{ex}

\appendix


\ack

I thank Peter Jagers and Olle Nerman for helpful comments.

%
%
%
%

\newcommand\vol{\textbf}
\newcommand\jour{\emph}
\newcommand\book{\emph}

\end{document}